\newcommand*{\rom}[1]{\expandafter\@slowromancap\romannumeral #1@}
\theoremstyle{plain}
\newtheorem{theorem}{Theorem}[section]
\newtheorem{corollary}[theorem]{Corollary}
\newtheorem{lemma}[theorem]{Lemma}
\newtheorem{proposition}[theorem]{Proposition}
\theoremstyle{remark}
\newtheorem{remark}{Remark}[section]
\theoremstyle{definition}
\theoremstyle{definition}
\numberwithin{equation}{section}
\def\Ric{\operatorname{Ric}}
\def\R{\mathbb{R}}
\def\setminus{ \backslash}
\def\inf{\operatorname{inf}}
\newcommand{\be}{\begin{equation}}
\newcommand{\ee}{\end{equation}}
\newcommand{\bee}{\begin{equation*}}
\newcommand{\eee}{\end{equation*}}
\def\p{\partial}
\def\la{\langle}
\def\ra{\rangle}
\def\lf{\left}
\def\ri{\right}
\def\Pi{\displaystyle{\mathbb{II}}}
\def\vh{\vspace{.2cm}}
\def\m{\mathfrak{m}}
\def\div{\operatorname{div}}
\def\m{\mathfrak{m}}
\def\d{\frac {\operatorname d}{\operatorname {dt}}}
\def\mu{\nu} 
\def\S{\Sigma}
\def\Na{\nabla}
\def\eps{\varepsilon}
\def\d{\delta}
\title{Mass of asymptotically flat $3$-manifolds \\ with boundary}
\author{Sven Hirsch}
\address{Department of Mathematics, Duke University, Durham, NC 27708, USA}
\email{sven.hirsch@duke.edu}
\author{Pengzi Miao}
\address{Department of Mathematics, University of Miami, Coral Gables, FL 33146, USA}
\email{pengzim@math.miami.edu}
\thanks{P. Miao acknowledges the support of NSF Grant DMS-1906423.}
\author{Tin-Yau Tsang}
\address{Department of Mathematics, University of California, Irvine, CA 92697}
\email{tytsang@uci.edu}
\begin{document}

\begin{abstract}
We study the mass of asymptotically flat $3$-manifolds with boundary using the method of Bray-Kazaras-Khuri-Stern \cite{BKKS}.
More precisely,  we derive a mass formula on the union of an asymptotically flat manifold and fill-ins of its boundary, 
and give new sufficient conditions guaranteeing the positivity of the mass.
Motivation to such consideration comes from studying the quasi-local mass of the boundary surface.
If the boundary isometrically embeds in the Euclidean space,  
we apply the formula to obtain convergence of the Brown-York mass 
along large surfaces tending to $\infty$ which include  the scaling of any fixed coordinate-convex surface. 
\end{abstract}

\maketitle

\markboth{Sven Hirsch, Pengzi Miao and Tin-Yau Tsang}{Mass of asymptotically flat $3$-manifolds with boundary}

\section{Introduction}

Given a complete asymptotically flat $3$-manifold $(M^3, g)$, Bray, Kazaras, Khuri and Stern   \cite{BKKS} gave a new 
proof of the three dimensional Riemannian positive mass theorem \cite{SY1, W}. More precisely, their result shows
\be \label{eq-BKKS}
\m (g) \ge \frac{1}{16 \pi} \int_{M_{_{ext}} } \left( \frac{ | \nabla^2 u |^2}{ | \nabla u | } + R | \nabla u | \right) ,
\ee
where $\m (g)  $ is the total mass \cite{ADM} of $(M, g)$,  $M_{_{ext}}$ is the exterior region in $(M,g)$, $R$ denotes 
the scalar curvature of the metric, and $u$ is a harmonic function, satisfying a Neumann boundary condition at $\partial M_{_{ext}}$, 
and is asymptotic to one of the asymptotically flat coordinate functions at infinity.

The method in \cite{BKKS} was inspired by the level sets approach to study scalar curvature via harmonic maps initiated by Stern \cite{S}
(also see Bray-Stern \cite{BS}).
The method was  further explored by Kazaras, Khuri and the first named author \cite{HKK} to prove the three dimensional 
spacetime positive mass theorem \cite{SY2} and  produced a new lower bound on the total energy-momentum of initial data sets.

In this paper, we apply the method of Bray-Kazaras-Khuri-Stern \cite{BKKS} to analyze the mass of asymptotically flat $3$-manifolds 
$(M^3, g)$ with nonempty boundary $\Sigma$.
In the case of $M$ being diffeomorphic to $ \R^3 $ minus a ball, the mass of such manifolds is often used  in the study of 
the quasi-local mass of $\Sigma$ (see \cite{Bartnik-qlmass, ST, WY2} for instance).

The following formula identifies the boundary terms of  the integral in \cite{BKKS} that relates the mass  and harmonic functions. 
(See Proposition \ref{prop-mass-bdry} and Remark \ref{rem-boundary-angle} in Section 2.)

\begin{proposition} \label{prop-mass-bdry-intro}
Let $(M, g)$ be an asymptotically flat $3$-manifold with boundary $\Sigma$. 
Let $u $ be a  harmonic function on $M$ which is asymptotic to one of  the asymptotically flat coordinate functions at infinity.  
Let $ \Sigma_t = u^{-1}(t)$ be the level set of $u$ and  $\mu $ be the infinity pointing 
unit normal to $\Sigma$. Then
\be \label{intro-mass-bdry}
\begin{split}
& \ 8 \pi \m (g) +  2 \pi  \int_{- T }^{T}  ( \chi(\Sigma_t) - 1 )  \, dt \\
\ge & \ \int_{M}  \frac{1}{2} \left( \frac{ | \nabla^2 u |^2 }{  | \nabla u |  }  +  R | \nabla u |   \right) \\
& \  - \int_{\Sigma} H | \nabla u |  
  +  \int_{ \{  \nabla_{_{\Sigma} } u \ne 0 \} } \left \la \nabla_{_{\Sigma }} \beta , \frac{ \nabla_{_{\Sigma}} u }{ | \nabla_{_\Sigma } u | }  \right \ra | \nabla u | .
\end{split}
\ee
Here $T >0 $ is some large constant so that  the Euler characteristic  $ \chi(\Sigma_t) = 1 $ for  $ | t | \ge T$; 
$H$ is the mean curvature of $\Sigma$ with respect to $\mu$;
$\beta$ is the angle between $\nabla u $ and $\Sigma$, which equals zero if $ \frac{\p u}{\p \nu} = 0 $;  
and operators $ \Delta_{_\Sigma}$,  $ \nabla_{_{\Sigma}}$ and $ \nabla^2_{_\Sigma}$ denote 
the corresponding Laplacian, gradient, and Hessian on $\Sigma$. 
\end{proposition}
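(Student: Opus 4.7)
The plan is to mimic the Bray-Kazaras-Khuri-Stern argument while carefully tracking the additional boundary contributions arising from $\Sigma$. The core ingredient is the Bochner-Stern identity for the harmonic function $u$, which at regular points reads
\[
|\nabla u|\,\Delta|\nabla u| = |\nabla^2 u|^2 - \bigl|\nabla|\nabla u|\bigr|^2 + \Ric(\nabla u,\nabla u).
\]
On each regular level set $\Sigma_t=u^{-1}(t)$, the Gauss equation rewrites $\Ric(\nabla u,\nabla u)$ in terms of $R$, the Gauss curvature $K_{\Sigma_t}$, and the mean curvature of $\Sigma_t$. Together with the refined Kato inequality available for harmonic $u$, this produces the pointwise inequality whose integration yields the bulk integrand on the right-hand side of (\ref{intro-mass-bdry}).

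Fix $T$ large so that $\chi(\Sigma_t)=1$ for $|t|\ge T$, and integrate the Bochner identity against a suitable weight on $\Omega_T := \{|u|\le T\}$. Applying the divergence theorem produces (i) a boundary term on a large coordinate sphere, which via the asymptotic expansion $u \sim x^i$ contributes $8\pi\,\m(g)$ exactly as in \cite{BKKS}, and (ii) a boundary term along $\Sigma$. Simultaneously, the coarea formula together with Gauss-Bonnet on each regular level set recasts the integral of $K_{\Sigma_t}|\nabla u|$ as
\[
\int_{-T}^{T} \Bigl(2\pi\,\chi(\Sigma_t) - \int_{\partial\Sigma_t} k_g\, ds\Bigr) dt,
\]
where $k_g$ denotes the geodesic curvature of $\partial\Sigma_t = \Sigma_t \cap \Sigma$ inside $\Sigma_t$. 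The part of the first integrand with $\chi(\Sigma_t)-1$ will survive as the topological term on the left-hand side, with the $|t|\ge T$ tails absorbed into the asymptotic contribution.

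The decisive step is then to match the two $\Sigma$-boundary pieces---the divergence-theorem contribution and the $k_g$ contribution---with the claimed terms $-\int_\Sigma H|\nabla u|$ and $\int \la\nabla_\Sigma\beta,\nabla_\Sigma u/|\nabla_\Sigma u|\ra |\nabla u|$. Decomposing $\nabla u = \cos\beta\, e + \sin\beta\, \nu$ along $\Sigma$ with $e$ tangent to $\Sigma$, a Frenet-type calculation along the intersection curve $\Sigma_t\cap\Sigma$ splits $k_g$ into a piece governed by the second fundamental form of $\Sigma$ (which, under coarea on $\Sigma$, integrates to $-\int_\Sigma H|\nabla u|$) and a piece involving tangential derivatives of $\beta$ along $\partial\Sigma_t$ (producing the $\nabla_\Sigma\beta$ term, again after coarea converts $\int_{-T}^T\int_{\partial\Sigma_t}(\cdots)\,dt$ into an integral over $\Sigma$ with density $|\nabla_\Sigma u|$).

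The main obstacle is the singular-set analysis: the angle $\beta$ is undefined where $\nabla_\Sigma u = 0$, level sets may be singular at critical values of $u$, and $\partial\Sigma_t$ need not meet $\Sigma$ transversally. The remedy is standard in spirit---replace $|\nabla u|$ by $\sqrt{|\nabla u|^2+\varepsilon^2}$, restrict via Sard's theorem to regular values of both $u$ and $u|_\Sigma$, and pass to the limit---but the limit of the $\nabla_\Sigma\beta$ integrand, which is inherently distributional on the zero set of $\nabla_\Sigma u$, is the point requiring the most careful estimate.
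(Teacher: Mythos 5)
Your overall skeleton is the right one (Bochner/Stern inequality in the interior, Gauss--Bonnet via coarea for the $K_{\Sigma_t}$ term, divergence theorem to produce a boundary term at a large exterior surface and one along $\Sigma$, $\epsilon$-regularisation of $|\nabla u|$ to handle the singular set), and it is essentially the structure used in the paper's Propositions \ref{prop-bdry-1} and \ref{prop-mass-bdry}. However, the ``decisive step'' as you describe it is wrong: you claim that a Frenet-type computation splits the geodesic curvature $k_g$ of $\partial\Sigma_t=\Sigma_t\cap\Sigma$ (in $\Sigma_t$) into a second-fundamental-form piece that integrates to $-\int_\Sigma H|\nabla u|$ and a $\nabla_\Sigma\beta$ piece. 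This is not what happens. If you work out $k_g$ you get $k_g = \sin\theta\,\Pi(\tau',\tau') + \cos\theta\,\la\nabla_{\tau'}\tau', n_t\ra$, which after coarea produces $\Pi(\tau',\tau')\sin^2\theta\,|\nabla u|$ --- a piece of the second fundamental form along the level-curve direction, weighted by $\sin^2\theta$, not $-H|\nabla u|$, and not anything like $\nabla_\Sigma\beta$. In fact, in the correct accounting the entire $k_g$ contribution from Gauss--Bonnet \emph{cancels} against a matching geodesic-curvature term that appears inside $\partial_\nu\phi$ restricted to $\Sigma$; it does not survive.

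The term $-\int_\Sigma H|\nabla u|$ and the term $\int\la\nabla_\Sigma\beta, \nabla_\Sigma u/|\nabla_\Sigma u|\ra|\nabla u|$ come entirely from the divergence-theorem contribution $\int_\Sigma\partial_\nu\phi$ --- the piece you simply list as ``(ii)'' and leave unexpanded. The nontrivial computation (Lemma \ref{lem-bdry-g} in the paper) is that, at points where $\nabla_\Sigma u\ne 0$,
\[
\partial_\nu\phi = -\frac{|\nabla u|^2}{\phi}H + \frac{|\nabla u|}{\phi}\kappa|\nabla_\Sigma u| + \frac{1}{\phi}\la\nabla_\Sigma\partial_\nu u,\nabla_\Sigma u\ra - \frac{\partial_\nu u}{\phi|\nabla_\Sigma u|^2}\nabla^2_\Sigma u(\nabla_\Sigma u,\nabla_\Sigma u),
\]
where $\kappa=k_g$. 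The $\kappa$ term here is what cancels the Gauss--Bonnet boundary term; the $-H$ term survives to give $-\int_\Sigma H|\nabla u|$; and the last two terms are precisely what recombine into $\la\nabla_\Sigma\beta,\nabla_\Sigma u/|\nabla_\Sigma u|\ra|\nabla u|$ (Remark \ref{rem-boundary-angle}). Without this expansion your plan has no way to generate the $H$ and $\nabla_\Sigma\beta$ terms with the stated coefficients, and the $k_g$ term would incorrectly remain. A minor secondary issue: the truncation $\Omega_T=\{|u|\le T\}$ is noncompact; the paper truncates along a large cylinder $S$ so that the exterior boundary contribution matches the BKKS mass formula \eqref{eq-mass-BKKS}, and you would need an analogous exhaustion rather than a slab.
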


Using \eqref{intro-mass-bdry}, we derive a formula of the mass of $(M, g)$  via fill-ins of its boundary $\Sigma$.
We say a compact Riemannian $3$-manifold $(\Omega, g_{_\Omega})$ is a fill-in of $\Sigma$ 
if  $ \p \Omega$ is isometric to $ \Sigma$.

\begin{figure}[h]
\includegraphics[scale=0.6]{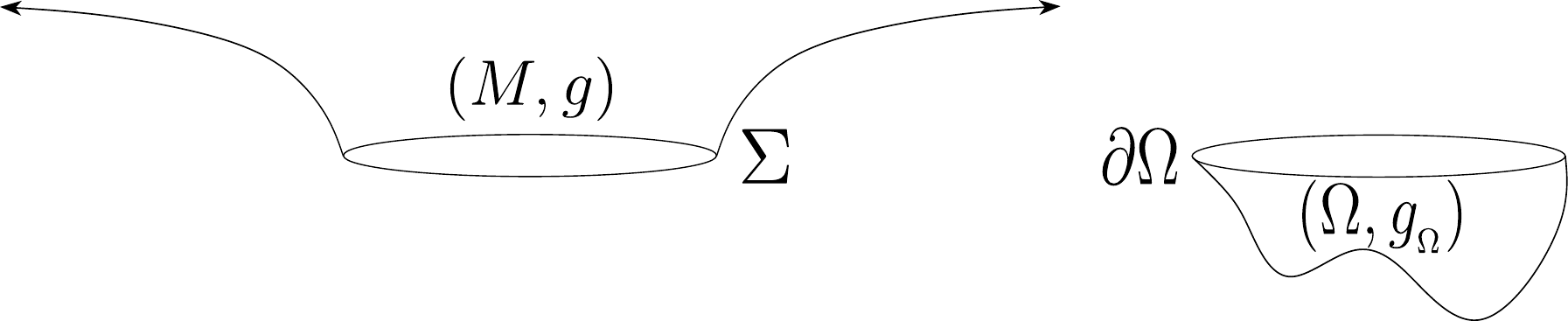}
  \caption{An asymptotically flat  manifold and  a fill-in of its boundary}
\end{figure}

Throughout the paper, $M$ and $ \Omega$ are assumed to be orientable and 
metrics $g$ and $g_{_\Omega}$ are assumed to be $C^2$ up to the boundary.

\begin{theorem}\label{main-1-intro}
Let $(M,g)$ be an asymptotically flat $3$-manifold with boundary $\Sigma$.
Let $ (\Omega, g_{_\Omega} )$ be a fill-in of $\Sigma$. 
Then there exist harmonic functions $ u$, $ u_{_\Omega}$ on $(M, g)$, $(\Omega, g_{_\Omega})$ respectively, so that
$u$ is asymptotic to one of the asymptotically flat coordinate functions near infinity on $(M, g)$, 
$u$ and $u_{_\Omega}$  are $C^2$ up to the boundary, 
and along the boundary, 
\be
u = u_{_\Omega} , \ \ \frac{\p u}{\p \mu} = \frac{ \p u_{_\Omega } }{\p \nu} .
\ee
Here $ \mu $ denotes the $\infty$-pointing unit normal to $ \Sigma$ in $(M, g)$, and by abuse of notation also the outward normal to 
$\p \Omega$ in $(\Omega, g_{_\Omega} )$. The equation is to be understood as two sides being equal under the isometry between $ \Sigma$ 
and $ \p \Omega$. 
If the topological manifold obtained by gluing $M$ and $ \Omega$ along their boundary
contains no non-separating $2$-spheres, then 
\begin{equation} \label{eq-main-intro}
\begin{split}
\m (g) \ge & \  \frac{1}{16 \pi} \left[ \int_{M   }\left(  \frac{|\nabla^2 u|}{|\nabla u|}+R|\nabla u|\right) 
+ \int_{ \Omega  }\left( \frac{|\nabla^2 u_{_\Omega} |}{|\nabla u_{_\Omega} |}+R|\nabla u_{_\Omega} |\right)   \right] \\
& \ + \frac{1}{8\pi} \int_\S (H_{_\Omega} - H ) |\nabla u| .
\end{split}
\end{equation}
Here $H$, $ H_{_\Omega}$ are the mean curvature of $\Sigma$, $ \p \Omega$ in $(M, g)$, $(\Omega, g_{_\Omega})$ respectively, 
and $ | \nabla u |$ denotes the common value of the length of the gradient of $ u$ and $ u_{_\Omega}$ at $\Sigma$ and $ \p \Omega$.
\end{theorem}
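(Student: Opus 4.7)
The plan is to construct the coupled harmonic pair $(u, u_\Omega)$ as the restrictions of a single weak solution on the glued manifold $\tilde M := M \cup_\Sigma \Omega$, then add Proposition~\ref{prop-mass-bdry-intro} applied to $(M,g,u)$ to its compact-domain analogue applied to $(\Omega, g_\Omega, u_\Omega)$, and finally discard the combined topological term using the hypothesis on non-separating spheres.

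For the construction, the metrics $g$ and $g_\Omega$ agree along $\Sigma = \partial M \cong \partial \Omega$ and assemble into a metric $\tilde g$ on $\tilde M$ that is $C^2$ on each side and Lipschitz across $\Sigma$. Fix an asymptotic coordinate function $x^i$ and a smooth cutoff supported near infinity, and solve $\Delta_{\tilde g} v = 0$ weakly on $\tilde M$ subject to $v$ being asymptotic to $x^i$ at infinity, via minimization of the Dirichlet energy in a suitable weighted Sobolev space (Bartnik's framework). Existence follows from Lax--Milgram together with a weighted Poincaré inequality on $\tilde M$; global $C^{1,\alpha}$ regularity across $\Sigma$ is supplied by standard elliptic theory for divergence-form operators with $L^\infty$ coefficients; and $C^2$ regularity up to $\Sigma$ from each side follows from Schauder estimates applied to the smooth equations $\Delta_g u = 0$ and $\Delta_{g_\Omega} u_\Omega = 0$ with $C^{1,\alpha}$ Dirichlet data on $\Sigma$, where $u := v|_M$ and $u_\Omega := v|_\Omega$. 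The distributional harmonic equation on $\tilde M$ encodes both the Dirichlet matching $u = u_\Omega$ and the Neumann matching $\partial_\mu u = \partial_\nu u_\Omega$ along $\Sigma$.

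Next, one applies Proposition~\ref{prop-mass-bdry-intro} to $(M,g,u)$ and its compact-domain counterpart to $(\Omega, g_\Omega, u_\Omega)$. Because the outward normal to $\Omega$ coincides with $\mu$ while the outward normal to $M$ is $-\mu$, the two boundary integrals enter with opposite signs. The matching conditions at $\Sigma$ imply $|\nabla u| = |\nabla u_\Omega|$, $\nabla_\Sigma u = \nabla_\Sigma u_\Omega$, and $\beta = \beta_\Omega$ on $\Sigma$, so the angle terms cancel upon addition while the mean-curvature terms combine into $\int_\Sigma (H_\Omega - H)|\nabla u|$. For a regular value $t$ of $v$, the level set $\tilde\Sigma_t := v^{-1}(t) \subset \tilde M$ splits as $\Sigma_t^M \cup_{\gamma_t} \Sigma_t^\Omega$ along the generically smooth closed $1$-manifold $\gamma_t := \tilde\Sigma_t \cap \Sigma$, and since $\chi(\gamma_t) = 0$, Mayer--Vietoris yields $\chi(\tilde\Sigma_t) = \chi(\Sigma_t^M) + \chi(\Sigma_t^\Omega)$; the combined topological integral therefore becomes $2\pi \int (\chi(\tilde\Sigma_t) - 2)\, dt$.

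Under the no-non-separating-sphere hypothesis on $\tilde M$, the Stern-type topological argument of \cite{BKKS} applies to $(\tilde M, v)$ to give $\chi(\tilde\Sigma_t) \le 2$ for almost every $t$; the combined topological integral then has the favorable sign and may be dropped, and dividing by $8\pi$ yields \eqref{eq-main-intro}. The hardest step will be the construction of $(u, u_\Omega)$: since $\tilde g$ is only Lipschitz across $\Sigma$, one must delicately combine elliptic theory for divergence-form operators with $L^\infty$ coefficients with one-sided Schauder theory to extract $C^2$ regularity up to $\Sigma$ from each smooth side, without which the integrands of Proposition~\ref{prop-mass-bdry-intro} are not even defined. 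A secondary technical point is the critical set $\{|\nabla v| = 0\}$, where the bulk integrand is singular; this is handled by the same $\varepsilon$-regularization and co-area arguments as in \cite{BKKS, S}.
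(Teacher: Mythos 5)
Your overall structure mirrors the paper's proof: build a Lipschitz glued metric, solve $\Delta_{\tilde g} v = 0$ on the glued manifold in Bartnik's weighted framework, restrict to each side, add the asymptotically flat boundary formula (Proposition \ref{prop-mass-bdry-intro}) to its compact counterpart, cancel the angle terms using the matching at $\Sigma$, and discard the nonpositive topological term. However, the regularity step—which you yourself flag at the end as the hardest part—is not actually supplied and the claim you make in its place is false. You assert that ``$C^2$ regularity up to $\Sigma$ from each side follows from Schauder estimates applied to $\Delta_g u = 0$ with $C^{1,\alpha}$ Dirichlet data on $\Sigma$.'' Schauder boundary estimates with $C^{1,\alpha}$ Dirichlet data give only $C^{1,\alpha}$ regularity up to the boundary, not $C^2$: the boundary regularity of a harmonic function is limited by the regularity of its boundary values (solving $\Delta u = 0$ in a half-ball with $u = |x'|^{1+\alpha}$ on the flat face already shows this). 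The missing ingredient is the tangential regularity theorem: one must first prove that the trace $v|_\Sigma$ is actually $C^{2,\alpha}$ (not merely $C^{1,\alpha}$), and the paper does this via tangential difference quotients. Writing $\delta^h_\alpha u (x) = h^{-1}[u(x+he_\alpha) - u(x)]$ for a tangential direction $e_\alpha$ and exploiting that the Christoffel symbols of $\tilde g$, although discontinuous across $\Sigma$, are Lipschitz in tangential directions, one gets a uniform $W^{2,p}_{loc}$ bound on $\delta^h_\alpha u$, hence $\partial_\alpha u \in W^{2,p}_{loc} \subset C^{1,\alpha}_{loc}$, hence $v|_\Sigma \in C^{2,\alpha}$. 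Only \emph{then} does one-sided Schauder regularity deliver $C^{2,\alpha}$ up to $\Sigma$ from each side. Without this step, the boundary integrands in your Proposition \ref{prop-mass-bdry-intro} and its compact analogue are not well defined, and the cancellation of angle terms cannot be performed.

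A secondary point: your topological accounting contains two compensating errors. You write the combined topological term as $2\pi\int(\chi(\tilde\Sigma_t) - 2)\,dt$ and claim the Stern-type bound gives $\chi(\tilde\Sigma_t) \le 2$. In fact only the asymptotically flat mass formula contributes a $-1$ (coming from the $2\pi$ in the asymptotic flux identity over the single end); the compact-domain boundary formula has no such shift, so the combined term is $2\pi\int(\chi(\tilde\Sigma_t) - 1)\,dt$. Correspondingly, the BKKS/Stern argument under condition (T) gives $\chi(\tilde\Sigma_t) \le 1$ (one noncompact component of Euler characteristic at most $1$ plus possible closed components of nonpositive Euler characteristic), not $\le 2$. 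Both corrections together still yield a nonpositive integral, so your final inequality happens to come out right, but the bookkeeping should be fixed.
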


\begin{remark}
Given  $(M^3, g)$ with boundary $\Sigma$, a fill-in of $\Sigma$ modulo the smoothness at a  point
 can be produced using $(M, g)$ itself (see Section \ref{sec-conformal-fill-in} for details). 
 More generally, it is shown by Shi, Wang and Wei \cite{SWW} that fill-ins with 
 nonnegative scalar curvature always exist for
 $\Sigma$.  Related study of fill-ins with or without mean curvature constraint can be found 
 in \cite{J, JMT, MM, MMT, SWW, SWWZ}. 
\end{remark}

Formula \eqref{eq-main-intro} is not surprising in the sense that the mean curvature difference $(H_{_\Omega} - H )$
measures the distributional scalar curvature of the manifold $(N, \hat g) = (M, g) \cup (\Omega, g_{_\Omega} )$ 
across $ \Sigma = \p \Omega$ (see \cite{M1, ST}).  
A proof of \eqref{eq-main-intro} then can be given by 
approximating $\hat g$ by smooth metrics  produced in \cite{M1} 
and applying formula \eqref{eq-BKKS} to the approximation  (see the appendix). 
In Section \ref{sec-mass-fill-in} below, we choose a more direct approach toward Theorem \ref{main-1-intro} by 
working with harmonic functions defined on the singular manifold $(N, \hat g)$.
By analyzing their regularity at the corner surface $\Sigma $, in Proposition 3.1 we show these functions  
are necessarily smooth on $\Sigma $, and hence smooth up to $\Sigma$ from the two sides in $(N, \hat g)$. 
As a result, except the mean curvature term, boundary terms in \eqref{intro-mass-bdry} from $(M, g)$ are compensated by the 
corresponding boundary terms from $(\Omega, g_{_\Omega})$, which gives \eqref{eq-main-intro}.
We think this type of tangential regularity result can be useful in other PDE settings as well.

Formula \eqref{eq-main-intro} leads to a sufficient condition that guarantees positivity of $\m (g)$.

\begin{corollary} \label{cor-WY}
Let $(M, g)$ and $(\Omega, g_{_\Omega})$ be given as in Theorem \ref{main-1-intro}.
Suppose there exist vector fields $X$, $Y$ on $\Omega$, $M$, respectively, such that 
\begin{itemize}
\item[a)]  $ R \ge C_1|X|^2-2\div X \ \text{on} \ \Omega $,
\item[b)] $  R \ge C_2|Y|^2-2\div Y \ \text{on} \  M $ with $Y=O(|x|^{-1-2\tau})$ for some $\tau>\frac{1}{2}$ and
\item[c)] $ H-\la Y,\mu  \ra\le H_{_\Omega}-\la X,\nu \ra$.
\end{itemize} 
Here $C_1,C_2\geq\frac{2}{3}$ are some constants. Then $\m (g) \ge0$.
If $\m (g) = 0 $ and  $C_1, C_2>\dfrac{2}{3}$, then  $X=0$, $Y=0$,
$\Sigma$ and $ \p \Omega$ have the same second fundamental forms,   
and  the manifold $(M , g)\cup (\Omega, g_{_\Omega})$ obtained by gluing along $\Sigma = \p \Omega$ 
is isometric to $\R^3$.
\end{corollary}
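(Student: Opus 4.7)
The plan is to apply Theorem \ref{main-1-intro} to a harmonic function $u$ asymptotic to a fixed asymptotically flat coordinate function and bound the right-hand side of \eqref{eq-main-intro} from below using hypotheses (a)--(c). First I would substitute (a) and (b) into the integrals of $R|\nabla u|$ and $R|\nabla u_{_\Omega}|$ and integrate the divergence terms by parts. Orienting carefully (here $\mu$ is inward on $\p M = \Sigma$ because it points to infinity, while $\nu$ is outward on $\p \Omega$), the decay $Y = O(|x|^{-1-2\tau})$ with $\tau > \tfrac{1}{2}$ together with $|\nabla u| = O(1)$ makes the flux at infinity vanish, so the boundary contributions all live on $\Sigma$ and combine with the mean curvature term to give
\[
2\int_\Sigma \big[(H_{_\Omega} - \la X, \nu \ra) - (H - \la Y, \mu \ra)\big]\, |\nabla u|\,dA,
\]
which is non-negative by (c).

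For the interior terms, completing the square gives
\[
C_2 |Y|^2|\nabla u| + 2 \la Y, \nabla |\nabla u| \ra \ \ge\ -\frac{|\nabla|\nabla u||^2}{C_2 |\nabla u|},
\]
and analogously on $\Omega$ with $X$ and $C_1$. Invoking the refined Kato inequality for harmonic functions in dimension three, $|\nabla|\nabla u||^2 \le \tfrac{2}{3}|\nabla^2 u|^2$, lets me absorb this negative term into the Hessian piece of \eqref{eq-main-intro}, leaving a coefficient $(1 - \tfrac{2}{3 C_i}) \ge 0$ in front of $\int |\nabla^2 u|^2/|\nabla u|$. Every term on the right-hand side of \eqref{eq-main-intro} is then non-negative, which yields $\m (g) \ge 0$.

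For the rigidity case when $\m (g) = 0$ and $C_1, C_2 > \tfrac{2}{3}$, the strict positivity of $1 - \tfrac{2}{3 C_i}$ forces $\nabla^2 u \equiv 0$ and $\nabla^2 u_{_\Omega} \equiv 0$; the completion-of-squares equality then forces $X \equiv 0$, $Y \equiv 0$, and the boundary integral vanishes, giving $H = H_{_\Omega}$ on $\Sigma$ (at points where $|\nabla u| > 0$, which in the parallel-gradient setting is all of $\Sigma$ since $|\nabla u|$ is then constant and asymptotes to $1$). Running the argument for three harmonic functions asymptotic to the three coordinate functions produces three globally orthonormal parallel vector fields on the glued manifold $N = (M, g) \cup (\Omega, g_{_\Omega})$, which forces $N$ to be flat and, together with the topological hypothesis and the asymptotic geometry, isometric to $\R^3$. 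I expect the main obstacle to lie precisely in this last step: to carry the parallel-gradient property across the a-priori corner at $\Sigma$, one must combine the tangential $C^2$ regularity of Proposition 3.1 with the matching of $u$ and $u_{_\Omega}$ on $\Sigma$; once $N$ is identified with an open subset of Euclidean space, the equality of the two second fundamental forms along $\Sigma$ follows automatically.
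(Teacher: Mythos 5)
Your argument for $\m(g)\ge 0$ is essentially the paper's: substitute conditions (a), (b) into the right side of \eqref{eq-main-intro}, integrate by parts (with the flux at infinity killed by $Y=O(|x|^{-1-2\tau})$, $\tau>\tfrac12$), and absorb the $|\nabla|\nabla u||$ terms into $|\nabla^2 u|^2/|\nabla u|$ via the refined Kato inequality $|\nabla|\nabla u||^2\le\tfrac23|\nabla^2 u|^2$. Your choice to complete the square as $C_i|Z|^2|\nabla u| + 2\la Z,\nabla|\nabla u|\ra \ge -|\nabla|\nabla u||^2/(C_i|\nabla u|)$ and then use Kato is equivalent to the paper's version (which reduces to $C_i=\tfrac23$, applies Kato first, and then writes the remainder as a perfect square plus $|\nabla|\nabla u||\,|X| + \la X,\nabla|\nabla u|\ra \ge 0$). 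The sign bookkeeping with $\mu$ inward on $\p M$ and $\nu$ outward on $\p\Omega$ is correct.

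The rigidity discussion is where the proposal has a real gap. You correctly extract $\nabla^2 u\equiv 0$, $X\equiv 0$, $Y\equiv 0$ and $H=H_{_\Omega}$, and you correctly flag that the obstacle is transferring the parallel-gradient property across the corner. But the resolution you sketch runs in the wrong logical order: you cannot first ``identify $N$ with an open subset of Euclidean space'' and then read off that the two second fundamental forms agree, because the metric on $N$ is a priori only Lipschitz across $\Sigma$ and the Levi--Civita connection is not yet defined there, so the three gradients being parallel on each piece does \emph{not} by itself give a global parallel frame. The paper's key step is to prove the second fundamental form agreement \emph{directly} from $\nabla^2 u_i = 0$: evaluating the Hessian in tangential directions on each side gives $0 = \nabla^2_{_\Sigma} u_i(\p_\alpha,\p_\beta) + \Pi(\p_\alpha,\p_\beta)\,\p_\mu u_i$, and since the intrinsic term $\nabla^2_{_\Sigma} u_i$ and the normal derivative $\p_\mu u_i$ match across $\Sigma$ (by the tangential regularity of Proposition 3.1 and the $C^1$ matching of $u$), while some $\p_\mu u_i\ne 0$ at each point ($\{\nabla u_i\}$ is a frame), the two second fundamental forms of $\Sigma$ must coincide. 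Only \emph{then} does one conclude $g$ is $C^2$ across $\Sigma$, upgrade $u$ to $C^{2,\alpha}$ across $\Sigma$, and finally build the isometry $F=(u_1,u_2,u_3)$ with $\R^3$. That identity \eqref{eq-same-Hessian} is the ingredient your proposal is missing.
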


\begin{remark}
Motivations for allowing such vector fields on $\Omega $ and $M$ come from the use of Jang's equation \cite{SY2}
in the context of quasi-local mass (see \cite{LY1, LY2, WY1, WY2} for instance). 
It is known the scalar curvature $R$ of the graph of solutions to Jang's equation satisfies  
$ R \ge 2 | X |^2 - 2 \div X$ for a vector filed $X$.
We are recently informed by Aghil Alaee that 
results similar to $\m (g) \ge 0$  in Corollary \ref{cor-WY} may also be derived via the method in \cite{Alaee-K-Y}.
\end{remark}

If we make use of a fill-in of $\Sigma $ produced by $(M, g)$ itself, we obtain a result that only depends on $(M, g)$.

\begin{corollary} \label{Green-intro}
Let $(M, g)$ be an asymptotically flat $3$-manifold with boundary $ \Sigma$. 
Let $ G$ be the harmonic function on $(M, g)$ satisfying 
$ G = 1 $ at $ \Sigma $ and $ G \to 0 $ at $ \infty $.
Let $ w $ be the harmonic function satisfying $ w = 0 $ at $ \Sigma$ and $ w $ is asymptotic to an asymptotically flat coordinate function 
at $\infty$. 
Then there exists a harmonic function $v$ on $(M,g)$ such that $ v \to 0 $ at $\infty$ and 
\be
 v \frac{\p G}{\p \mu} - 2 \frac{\p v}{\p \mu}  = \frac{ \p w}{\p \mu} \ \text{at} \ \Sigma .
\ee
Moreover, if $(M, g)$ is diffeomorphic to $ \R^3 $ minus a ball, 
\be \label{eq-m-Gw}
\m(g) \ge  \frac{1}{16 \pi} \int_{M   }\left[\frac{|\nabla^2u|}{|\nabla u|}+R|\nabla u|\right] -  \frac{1}{4\pi} \int_\S ( H + 2  \frac{\p G}{\p \mu} ) |\nabla u| .
\ee
Here $ u = v + w $.
\end{corollary}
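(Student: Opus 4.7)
The plan is to apply Theorem \ref{main-1-intro} to $(M, g)$ paired with a fill-in $(\Omega, g_\Omega)$ of $\Sigma$ built from $(M, g)$ itself by conformally compactifying the asymptotic end using $G$. Concretely, I set $g_\Omega := G^4 g$. Since $G \equiv 1$ on $\Sigma$, the metrics induced on the boundaries agree; since $G \sim c/|x|$ at infinity, $g_\Omega$ collapses the asymptotic end to a single point, producing a compact manifold $\Omega$ that is topologically a ball when $M \cong \R^3 \setminus B$. The glued manifold $M \cup_\Sigma \Omega$ is then topologically $\R^3$, hence contains no non-separating $2$-sphere, so the hypotheses of Theorem \ref{main-1-intro} are in force modulo the regularity of $g_\Omega$ at the compactified point.

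Next, I apply Theorem \ref{main-1-intro} to produce harmonic functions $u$ on $(M, g)$ (asymptotic to a coordinate function at infinity) and $u_\Omega$ on $(\Omega, g_\Omega)$ with matched Dirichlet and Neumann data along $\Sigma$. Writing $\tilde\Delta$ for the Laplacian of $g_\Omega$, the key conformal identity (valid because $G$ is $g$-harmonic) is $\tilde\Delta h = G^{-5}\Delta(G h)$; thus $u_\Omega$ is $g_\Omega$-harmonic iff $v := G u_\Omega$, viewed on $M$ via the conformal identification, is $g$-harmonic. Boundedness of $u_\Omega$ together with $G \to 0$ at infinity yields $v \to 0$ there, and $G|_\Sigma = 1$ gives $v|_\Sigma = u|_\Sigma$. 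Setting $w := u - v$, the function $w$ is $g$-harmonic on $M$ with $w|_\Sigma = 0$ and the prescribed asymptotic behavior at infinity, matching the $w$ of the corollary statement. Translating the Neumann matching through $u_\Omega = v/G$, and accounting for the sign flip between the intrinsic outward normal of the conformal $\Omega$ and the outward normal $\mu$ in Theorem \ref{main-1-intro}'s gluing convention, produces at $\Sigma$
\begin{equation*}
\partial_\mu v + \partial_\mu w = -\partial_\mu v + v\,\partial_\mu G,
\end{equation*}
which rearranges to $v\,\partial_\mu G - 2\,\partial_\mu v = \partial_\mu w$, giving existence of $v$ with the stated boundary condition.

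The final computation is the mean-curvature term. Applying the standard conformal formula $\tilde H = \phi^{-2}(H_g + 4\phi^{-1}\partial_n\phi)$ with $\phi = G$, $G|_\Sigma = 1$, and $n$ the intrinsic outward normal of $\Omega$ (which is $-\mu$ in the ambient), one obtains $H_\Omega = -H - 4\,\partial_\mu G$. Hence $H_\Omega - H = -2H - 4\,\partial_\mu G$, and the boundary term of \eqref{eq-main-intro} reduces to $-\frac{1}{4\pi}\int_\Sigma (H + 2\,\partial_\mu G)|\nabla u|$. Substituting into \eqref{eq-main-intro} and discarding the bulk integral over $\Omega$, which is nonnegative under $R \ge 0$ since then $R_\Omega = G^{-4} R \ge 0$, yields the inequality claimed in \eqref{eq-m-Gw}.

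The main obstacle is the limited regularity of $g_\Omega$ at the compactified infinity point: after one-point compactification, $g_\Omega$ is typically only continuous there, so Theorem \ref{main-1-intro} does not apply verbatim. I would address this by an approximation scheme in the spirit of the paper's appendix: smooth $g_\Omega$ in a small neighborhood of the singular point, apply Theorem \ref{main-1-intro} to the resulting smooth fill-in, and pass to the limit, with the relevant integrals controlled by the $1/|x|$ asymptotics of $G$ and the prescribed decay of harmonic functions on $(M, g)$. A secondary bookkeeping point is the orientation reversal highlighted above, which drives the sign pattern in both the Neumann condition and the formula for $H_\Omega$.
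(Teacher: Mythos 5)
Your proof follows the paper's route almost exactly: conformal compactification $g_\Omega = G^4 g$, one-point compactification $\Omega = M \cup \{q\}$, the conformal identity $\tilde\Delta h = G^{-5}\Delta(Gh)$ for harmonic $G$, the substitution $v = G u_\Omega$, the Neumann sign flip $\partial_\mu u_\Omega = -\partial_\mu u$, and the conformal mean-curvature computation $H_\Omega = -H - 4\,\partial_\mu G$. The boundary identity $v\,\partial_\mu G - 2\partial_\mu v = \partial_\mu w$ and the reduction of the mean-curvature term to $-\tfrac{1}{4\pi}\int_\Sigma(H+2\partial_\mu G)|\nabla u|$ are derived the same way the paper does.

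Two points of divergence are worth noting. First, you handle the singular point $q$ by a mollification argument modeled on the paper's appendix, but that appendix handles a corner along a codimension-one hypersurface, not an isolated interior point, so adapting it would require some nontrivial modification. The paper avoids this entirely: it invokes Lemma 6.1 of \cite{MMT} to conclude $g_* \in W^{1,p}(B)$ for some $p>6$ near $q$, so the glued metric has precisely the $W^{1,p}_{loc}$ regularity already handled in Section \ref{Existence}, and no approximation is needed. Your remark that $g_\Omega$ is "typically only continuous" at $q$ understates the available regularity. Second, you correctly observe that discarding the bulk integral $\int_\Omega\big[\tfrac{|\nabla^2 u_\Omega|^2}{|\nabla u_\Omega|} + R_\Omega|\nabla u_\Omega|\big]$ requires $R_\Omega = G^{-4}R \ge 0$, hence $R \ge 0$ on $M$. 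The corollary as stated omits this hypothesis, and the paper's one-line derivation from \eqref{eq-E-Omega-*} also drops the $\Omega$ term without justification; so your making this assumption explicit is a legitimate observation rather than an error. (Without some sign control on $R$, \eqref{eq-m-Gw} is not an immediate consequence of \eqref{eq-E-Omega-*}.)
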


\begin{remark}
If $ R \ge 0 $  and $ H \le - 2  \frac{\p G}{\p \mu} $,  \eqref{eq-m-Gw} implies $ \m (g) \ge 0 $, which is a special case of results in \cite{HM}.
\end{remark}

Given a surface $ \Sigma$ with mean curvature $H$ in a $3$-manifold $(M, g)$, if $\Sigma$ has positive Gauss
curvature, the Brown-York mass  (\cite{BY1, BY2}) of $\Sigma$ is given by
\be
\m_{_{BY}} (\Sigma) = \frac{1}{8\pi} \int_{\Sigma} (H_0 - H) . 
\ee
Here $ H_0$ denotes the mean curvature of the isometric embedding of $\Sigma$ in the Euclidean space $(\R^3, g_{_0})$.

In an asymptotically flat $3$-manifold $(M, g)$, let $ \Sigma$ denote a large $2$-sphere near infinity. It is 
a natural question to compare the Brown-York mass of  $\Sigma$ and the mass of $ (M, g)$.

\begin{figure}[h]
\includegraphics[scale=0.5]{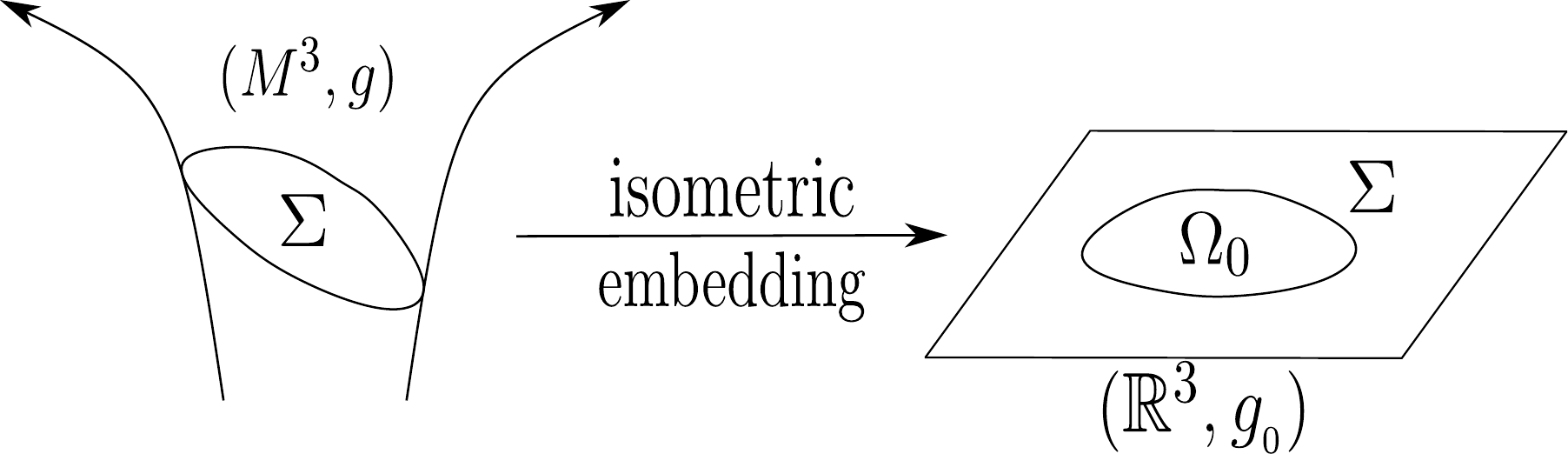}
  \caption{Brown-York mass of large surfaces in $(M, g)$}
\end{figure}

If $\{ \Sigma_r \}$ is a family of large coordinate spheres, it was demonstrated  in \cite{FST} that 
\be \label{eq-BY-mass-intro}
 \lim_{r \to \infty} \m_{_{BY}} (\Sigma_r) = \m (g) . 
\ee
The same convergence property was shown for nearly round surfaces in \cite{SWW-09} and for 
surfaces of revolution in asymptotically Schwarzschild manifold in \cite{FK-1, FK-2}.

Theorem \ref{main-1-intro} suggests a natural approach to analyze the convergence of $ \m_{_{BY} } (\Sigma_r)$ for 
a more general class of surfaces. Let $ \Omega_0$ be  the domain enclosed by $\Sigma$ in $\R^3$. 
Then $(\Omega_0, g_0)$ provides a natural fill-in of the asymptotically end $(E, g)$, 
where $ E$ is the exterior of $ \Sigma$ in $M$. Applying \eqref{eq-main-intro} to 
the manifold $(N, \hat g) = (E, g) \cup (\Omega_0, g_0)$, 
one expects to relate $\m(g)$ to $\m_{_{BY}} (\Sigma)$ by 
estimating the corresponding harmonic function. (In this case it can be shown that \eqref{eq-main-intro} is indeed an equation.)
By implementing this idea, we show that

\begin{theorem} \label{main-2-intro}
Let $(M, g)$ be an asymptotically flat $3$-manifold with a coordinate chart $\{ x_i \}$ in which 
the metric coefficients $g_{ij}$ satisfies the asymptotically flat condition. 
Let $ \{  \Sigma_r \}_{ r \ge r_0}  $ be a $1$-parameter family of surfaces approaching infinity,
where  $ r = \min_{\Sigma_r} \{ | x | \ | \ x \in \Sigma_r \} $.
Suppose the rescaled surfaces
$$
\widetilde \Sigma_r : = \left\{ \frac{1}{r} x \, | \, x \in \Sigma_r \right \} \subset \R^3
$$
satisfy the property
\be \label{eq-condition-kappa-intro}
0 < k_1 < \kappa_\alpha < k_2, \ \alpha = 1, 2, 
\ee
where $ \{ \kappa_\alpha \}$ are the principal curvatures of $ \widetilde \Sigma_r $ in $ \R^3$ with respect to the background Euclidean metric 
and $ k_1, k_2$ are two positive constants independent of $r$. 
Then 
\be \label{eq-BY-limit-1-intro}
 \lim_{r \to \infty} \m_{_{BY} }( \Sigma_r) = \m (g). 
 \ee
\end{theorem}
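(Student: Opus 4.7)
The plan is to apply Theorem~\ref{main-1-intro} to the glued manifold $(N_r, \hat g_r) = (E_r, g) \cup (\Omega_0^{(r)}, g_0)$, where $E_r \subset M$ is the unbounded component of $M \setminus \Sigma_r$ and $(\Omega_0^{(r)}, g_0)$ is the Euclidean domain bounded by the isometric embedding of $\Sigma_r$ into $\R^3$. Assumption \eqref{eq-condition-kappa-intro}, together with asymptotic flatness of $g$, forces the induced metric on $\Sigma_r$ to have positive Gauss curvature for all large $r$, so the Weyl isometric embedding exists and is unique. I would first verify that the inequality \eqref{eq-main-intro} becomes an equality in this Euclidean fill-in setup, as signalled in the discussion preceding the statement, obtaining a harmonic function $u_r$ on $N_r$, smooth up to $\Sigma_r$ from each side, asymptotic to a coordinate function $x^1$, and satisfying
\[
\m(g) = \frac{1}{16\pi} \int_{N_r} \frac{|\nabla^2 u_r|^2}{|\nabla u_r|} + \frac{1}{16\pi} \int_{E_r} R\, |\nabla u_r| + \frac{1}{8\pi} \int_{\Sigma_r} (H_0 - H)\, |\nabla u_r|,
\]
since $R \equiv 0$ on the Euclidean piece.

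The next step is a blow-down analysis. Set $\tilde g_r = r^{-2} \hat g_r$ and $\tilde u_r = r^{-1} u_r$. On $E_r$, the rescaled metric converges in $C^k_{\mathrm{loc}}$ on $\R^3 \setminus \{0\}$ to $g_0$ by asymptotic flatness, while on $\Omega_0^{(r)}$ it remains Euclidean. By \eqref{eq-condition-kappa-intro} the family $\{\widetilde\Sigma_r\}$ is precompact in $C^{1,\alpha}$; after passing to a subsequence, $\widetilde\Sigma_r \to \widetilde\Sigma_\infty$ and $\widetilde\Omega_0^{(r)}$ converges accordingly. Since $\tilde u_r$ is harmonic with respect to $\tilde g_r$ and asymptotic to $y^1$ at infinity, standard elliptic compactness combined with uniqueness of the harmonic function having prescribed asymptotics forces $\tilde u_r \to y^1$ in $C^2_{\mathrm{loc}}$, and a diagonal argument removes the subsequential dependence. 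The scale invariance $|\nabla u_r|_g = |\nabla \tilde u_r|_{\tilde g_r}$ then yields $|\nabla u_r|_g \to 1$ uniformly on $\Sigma_r$.

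Finally, I would take $r \to \infty$ in the mass identity. The Hessian integral over $\Omega_0^{(r)}$ is scale-invariant and tends to zero since in the limit $\tilde u_r \to y^1$ has vanishing Hessian. The term $\int_{E_r} R\, |\nabla u_r|$ vanishes by integrability of $R$ on $M$ and boundedness of $|\nabla u_r|$. For the Hessian term over $E_r$, I would split the domain into a near-boundary annulus $\{r \le |x| \le Rr\}$, where the rescaling argument produces decay to zero as $r \to \infty$ for each fixed $R$, and a far-field piece $\{|x| \ge Rr\}$ to be controlled uniformly in $r$ by the asymptotic expansion of harmonic functions on an asymptotically flat end; letting $R \to \infty$ after $r \to \infty$ drives both contributions to zero. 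Since $|\nabla u_r| = 1 + o(1)$ uniformly on $\Sigma_r$ and $\int_{\Sigma_r} |H_0 - H|$ remains controlled by asymptotic flatness and $|\Sigma_r| = O(r^2)$, one concludes
\[
\frac{1}{8\pi} \int_{\Sigma_r} (H_0 - H)\, |\nabla u_r| \;-\; \m_{_{BY}}(\Sigma_r) \;\to\; 0,
\]
so the identity collapses to $\lim_{r \to \infty} \m_{_{BY}}(\Sigma_r) = \m(g)$.

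The main obstacle is the uniform control of $\int_{E_r} |\nabla^2 u_r|^2 / |\nabla u_r|$: near $\Sigma_r$ the blow-down supplies the needed decay, but in the far field one needs a quantitative asymptotic expansion of $u_r$ that is uniform in $r$, and the two estimates must be matched across the interface $|x| \sim Rr$ so that sending $R \to \infty$ after $r \to \infty$ genuinely yields a vanishing limit.
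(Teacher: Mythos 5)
Your overall strategy is the same as the paper's: fill in $\Sigma_r$ by the Euclidean domain bounded by its Weyl embedding, apply Theorem~\ref{main-1-intro} (with equality since $|\nabla u_r|\neq0$), and show that the bulk integrals and the weight $|\nabla u_r|$ on the boundary both become negligible as $r\to\infty$. Where you diverge is in how the estimates are obtained, and that is precisely where the gap lies.

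The soft compactness argument you propose cannot close the proof, and the reason is quantitative, not technical. Under the rescaling $\tilde g_r = r^{-2}\hat g_r$, $\tilde u_r = r^{-1}u_r$, the Hessian functional transforms as
\[
\int_{E_r}\frac{|\nabla^2 u_r|^2}{|\nabla u_r|}\,dV_g \;=\; r\int_{\widetilde E_r}\frac{|\widetilde\nabla^2\tilde u_r|^2}{|\widetilde\nabla\tilde u_r|}\,dV_{\tilde g_r}.
\]
Compactness gives only that the rescaled integral tends to zero; to conclude that the \emph{unscaled} integral vanishes you need the rescaled one to decay \emph{strictly faster than} $r^{-1}$. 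That rate does not come from subsequential convergence. The paper obtains it by working in weighted Sobolev spaces and invoking Bartnik's Corollary~1.16 to get $\|\partial w_r\|_{W^{1,p}_{-\tau}}=O(r^{-\tau})$ for the correction $w_r = x_1 - u_r$, which then feeds into a H\"older estimate yielding $\int|\nabla^2 u_r|^2 = O(r^{-2\tau})$ in the rescaled picture. Since $\tau>\tfrac12$, the product $r\cdot O(r^{-2\tau}) = O(r^{1-2\tau})\to0$. Your proposed near/far split does not supply this rate, and you have correctly identified this as the unresolved issue. A similar quantitative rate (not merely $|\nabla u_r|\to1$) is needed for the boundary weight term, since $\int_{\Sigma_r}|H_0-H|$ is only $O(r^{-1})\cdot r^2 = O(r)$; one needs $||\nabla u_r|-1|=O(r^{-\tau})$ to kill it.

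There is a second omission that becomes visible once you demand quantitative estimates: the glued manifold $(N_r,\hat g_r)$ is an abstract manifold with corner, and there is no canonical identification with a fixed $\R^3$ on which Bartnik-type elliptic estimates can be run uniformly in $r$. The Weyl embedding $\widetilde X_r$ only identifies the boundary surface, not the interior. The paper resolves this by explicitly constructing an interpolating diffeomorphism $\tilde F_r:\widetilde\Omega_r\to\R^3$ that extends $\widetilde X_r$, matches the unit normals on the two sides, and is $O(r^{-\tau})$-close to the identity; pulling back $g_0$ by $\tilde F_r$ yields a Lipschitz metric $\hat g_r$ on the genuine $\R^3$ with $\|\hat g_r-g_0\|_{W^{2,p}_{-\tau}}=O(r^{-\tau})$. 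Without this gauge your blow-down compactness has no fixed space to converge in, and the quantitative elliptic theory has no stage to play on. These two points --- the gauge construction and the weighted Sobolev rate --- are where the actual work of the theorem sits, and both are missing from the proposal.
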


In particular, Theorem \ref{main-2-intro} applies to any family of surfaces that are
obtained by scaling a fixed convex surface in the background coordinate space.

We end this introduction with a connection of \eqref{eq-main-intro} to the positivity of Brown-York mass.
Suppose $ \Omega \subset \R^3$ is a bounded domain with smooth boundary $\Sigma$. Let $g$ be a metric 
on $ \Omega$ such that $ g $ and $g_0$ induce the same metric $\gamma$ on $ \Sigma$.
Applying Theorem \ref{main-1-intro} to $(\R^3 \setminus \Omega, g_0) \cup (\Omega, g)$, one has
\begin{equation}
\begin{split}
 \frac{1}{8\pi} \int_\S ( H_0 - H_{_\Omega}) |\nabla u| \ge & \  \frac{1}{16 \pi} \left[ \int_{\R^3 \setminus \Omega} \frac{|\nabla^2 u|}{|\nabla u|}  
+ \int_{ \Omega  }\left( \frac{|\nabla^2 u_{_\Omega} |}{|\nabla u_{_\Omega} |}+R|\nabla u_{_\Omega} |\right)   \right]   .
\end{split}
\end{equation}
Here $u$ and $u_{_\Omega}$ are given by Theorem \ref{main-1-intro}. In particular, $u$ is harmonic on $ (\R^3 \setminus \Omega, g_0)$, 
asymptotic to a coordinate function at infinity, and satisfies $ \int_{\Sigma} \frac{\p u}{\p \nu}  = 0 $ at $\Sigma$.

If $ g $ has nonnegative scalar curvature, it follows that
\be \label{eq-ana-ST}
 \frac{1}{8\pi} \int_\S ( H_0 - H_{_\Omega} ) |\nabla u| \ge 0 . 
\ee
Comparing to the positivity of Brown-York mass (\cite{ST}), \eqref{eq-ana-ST} has the drawback  of having a weight function $ | \nabla u | $ 
that is not explicitly determined by the boundary data of $(\Sigma, \gamma, H_{_\Omega})$. 
On the other hand, \eqref{eq-ana-ST} does not require an assumption 
on the mean curvature $H_{_\Omega}$ and the Gauss curvature of $\gamma$.  

The rest of the paper is organized as follows. 
In Section \ref{sec-bdry-formulae},  we derive the relevant formulae on manifolds with boundary in the spirit of \cite{BKKS, S}.
In Section \ref{sec-mass-fill-in},  we study regularity of harmonic functions on manifolds with corner along a hypersurface and 
prove Theorem \ref{main-1-intro},  Corollary \ref{cor-WY} and \ref{Green-intro}.
In Section \ref{sec-BY-mass}, we analyze the convergence of Brown-York mass using Theorem \ref{main-1-intro} 
and prove Theorem \ref{main-2-intro}. 

\section{The Boundary Formulae} \label{sec-bdry-formulae}

We start with a lemma. 

\begin{lemma} \label{lem-bdry-g}
Let $ U  $ be an open set with piecewise smooth boundary $  \p U $ in a given manifold. 
Let $  S$ be a smooth portion of $\p U$. Let $ u $ be a harmonic function on $U $. Suppose $ u$ is $ C^2$ up to $S$.
Let $ \nu $ denote the outward unit normal to $ S$. Given a constant $ \epsilon > 0$, let 
$ \phi = \sqrt{ | \nabla u |^2 + \epsilon }$. Then, at $ p \in S$, 
\begin{enumerate}
\item $ \p_\nu \phi  = 0 $, if $ \nabla u (p) = 0$; 

\item  $ \p_\nu \phi =  - \frac{ | \nabla u |^2   }{\phi} H  -  \frac{ \p_\nu u  }{\phi}  \Delta_{_S} \eta $, 
if $ \nabla u (p) \neq 0$ and $ \nabla_{_S} \eta (p) = 0 $;

\item  $ \p_\nu \phi =  -   \frac{ | \nabla u |^2 }{\phi}  H  + \frac{ | \nabla u | }{\phi}   \kappa | \nabla_{_S } \eta |
+  \frac{ 1 }{\phi}  \la \nabla_{_S} \p_\nu u, \nabla_{_S} \eta  \ra  
- \frac{ \p_\nu u }{\phi}  | \nabla_{_S} \eta |^{-2} \,  \nabla^2_{_S} \eta ( \nabla_{_S} \eta  , \nabla_{_S} \eta ) $, 
if $ \nabla_{_S} \eta (p) \neq 0$.
\end{enumerate} 
Here $ \eta = u |_{_S} $ is the restriction of $u$ to $S$; $ \nabla_{_S}$, $ \nabla^2_{_S}$ and $\Delta_{_S} $  denote the gradient,  the Hessian and the Laplacian  on $S$; $ H$ is the mean curvature of $ S$ with respect to $\nu$; 
and $ \kappa$ is the geodesic curvature of the level curve  of $\eta$  in the level surface 
of $u$ with respect to the outward direction. 
\end{lemma}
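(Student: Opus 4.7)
The plan is to reduce everything to the single identity $2\phi\,\partial_\nu\phi = \partial_\nu |\nabla u|^2 = 2\,\nabla^2 u(\nabla u,\nu)$, obtained directly from $\phi^2 = |\nabla u|^2 + \epsilon$; the task then becomes to evaluate $\nabla^2 u(\nabla u,\nu)$ on $S$ via a tangential/normal decomposition of $\nabla u$ and the harmonicity of $u$. To this end I would first record two Gauss--Weingarten consequences valid for tangent $X,Y$ at points of $S$:
\begin{equation*}
\nabla^2 u(X,Y) \ = \ \nabla^2_{_S}\eta(X,Y) + (\partial_\nu u)\,A(X,Y),
\end{equation*}
\begin{equation*}
\nabla^2 u(X,\nu) \ = \ \la \nabla_{_S}(\partial_\nu u), X\ra \, - \, A(X, \nabla_{_S}\eta),
\end{equation*}
where $A$ is the second fundamental form of $S$ with respect to $\nu$. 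Taking the $S$-trace of the first identity and using $\Delta u = 0$ gives $\nabla^2 u(\nu,\nu) = -\Delta_{_S}\eta - H\,\partial_\nu u$. Plugging the decomposition $\nabla u = \nabla_{_S}\eta + (\partial_\nu u)\,\nu$ into $\nabla^2 u(\nabla u,\nu)$ then produces the master formula
\begin{equation*}
\phi\,\partial_\nu\phi \ = \ \la \nabla_{_S}(\partial_\nu u), \nabla_{_S}\eta\ra - A(\nabla_{_S}\eta,\nabla_{_S}\eta) - (\partial_\nu u)\,\Delta_{_S}\eta - H(\partial_\nu u)^2,
\end{equation*}
from which cases (1) and (2) drop out immediately by setting $\nabla u(p)=0$ and $\nabla_{_S}\eta(p)=0$ respectively (in the latter using $|\nabla u|^2 = (\partial_\nu u)^2$).

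Case (3) requires trading $A(\nabla_{_S}\eta,\nabla_{_S}\eta)$ for $H$ and the geodesic curvature $\kappa$. I would introduce an orthonormal frame $\{e_1,e_2\}$ of $T_p S$ with $e_2 = \nabla_{_S}\eta/|\nabla_{_S}\eta|$, so $e_1$ is tangent to $\gamma := S\cap \Sigma_t$ at $p$ by transversality. Rewriting $A(e_2,e_2) = H - A(e_1,e_1)$ and
\begin{equation*}
\Delta_{_S}\eta - |\nabla_{_S}\eta|^{-2}\,\nabla^2_{_S}\eta(\nabla_{_S}\eta,\nabla_{_S}\eta) \ = \ \nabla^2_{_S}\eta(e_1,e_1),
\end{equation*}
the master formula collapses to verifying the single geometric identity
\begin{equation*}
|\nabla_{_S}\eta|^2\, A(e_1,e_1) - (\partial_\nu u)\,\nabla^2_{_S}\eta(e_1,e_1) \ = \ |\nabla u|\,\kappa\,|\nabla_{_S}\eta|.
\end{equation*}

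To establish this last relation I would expand the curvature vector $\nabla_{e_1} e_1$ of $\gamma$ in the orthonormal basis $(e_1,e_2,\nu)$ of $T_p M$. Its $\nu$-component is $-A(e_1,e_1)$ by Weingarten, while its $e_2$-component is the intrinsic geodesic curvature of the $\eta$-level curve in $S$, which differentiating $\la e_1,\nabla_{_S}\eta\ra \equiv 0$ along $e_1$ evaluates to $-\nabla^2_{_S}\eta(e_1,e_1)/|\nabla_{_S}\eta|$. Since $\nabla u$ is normal to $\Sigma_t$, the unit direction in $T_p\Sigma_t$ perpendicular to $e_1$ lies in $\mathrm{span}(e_2,\nu)$ and is proportional to $(\partial_\nu u)\,e_2 - |\nabla_{_S}\eta|\,\nu$, with sign fixed by the stated ``outward direction'' convention; pairing this unit vector with $\nabla_{e_1}e_1$ yields precisely the desired combination divided by $|\nabla u|$. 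I expect the sign tracking in this final pairing to be the only delicate point -- everything else is tangential calculus together with $\Delta u = 0$.
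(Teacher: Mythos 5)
Your proof is correct, and it reorganizes the paper's argument in a genuinely cleaner way. The paper begins with the same identity $\phi\,\partial_\nu\phi = \nabla^2 u(\nabla u,\nu)$, but then decomposes $n=\nabla u/|\nabla u|$ using the angle $\theta$ between $\nu$ and $n$ (writing $n = n_t\sin\theta + \nu\cos\theta$ and carrying $\cos\theta$, $\sin\theta$ through several displays), handles Cases~2 and~3 largely as separate computations, and produces $\kappa$ via an intermediate formula expressing $-\Pi(n_t,n_t)\sin\theta$ in terms of $\kappa$, $H$, and $\langle\nabla_{\tau'}\tau', n_t\rangle$. Your route instead derives a single ``master formula''
\[
\phi\,\partial_\nu\phi \;=\; \langle\nabla_{_S}(\partial_\nu u),\nabla_{_S}\eta\rangle - A(\nabla_{_S}\eta,\nabla_{_S}\eta) - (\partial_\nu u)\Delta_{_S}\eta - H(\partial_\nu u)^2
\]
from which all three cases drop out immediately, and then isolates the only nontrivial remaining input as the identity $|\nabla_{_S}\eta|^2 A(e_1,e_1) - (\partial_\nu u)\nabla^2_{_S}\eta(e_1,e_1) = |\nabla u|\,\kappa\,|\nabla_{_S}\eta|$, which you prove by expanding the curvature vector $\nabla_{e_1}e_1$ in the orthonormal basis $(e_1,e_2,\nu)$. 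Both arguments express the same content (tangential/normal decomposition of the boundary Hessian, harmonicity giving $\nabla^2 u(\nu,\nu) = -\Delta_{_S}\eta - H\partial_\nu u$, and a Gauss--Weingarten account of $\kappa$), but your version unifies the cases and avoids trigonometric bookkeeping, which makes the structure more transparent. One small caveat: with the paper's conventions (where $\nu$ points \emph{out} of $U$), the outward $\nu_t$ is proportional to $-(\partial_\nu u)\,e_2 + |\nabla_{_S}\eta|\,\nu$, i.e.\ the opposite sign from what you wrote; you flagged that the sign is fixed by convention, and indeed with $\kappa = -\langle\nabla_{e_1}e_1,\nu_t\rangle$ the identity comes out with the stated signs, so this is a bookkeeping remark rather than a gap.
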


\begin{proof}
At $p$, we  have
\be
\p_\nu \phi =  \frac{1}{ \phi}  \la \nabla _\nu \nabla u , \nabla u \ra =  \frac{1}{\phi} \nabla^2 u ( \nu, \nabla u ) .
\ee
Thus, $ \p_\nu \phi = 0 $ if $ \nabla u = 0 $ at $p$. 
In what follows, we assume $ \nabla u (p) \ne 0 $. Near $p$ in $S$, let 
$ n = \frac{\nabla u }{ | \nabla u | }$.
Then
\be
\begin{split}
\p_\nu \phi = & \ \frac{ | \nabla u | }{\phi} \,  \nabla^2 u \left( \nu,  n \right).
\end{split}
\ee

Let $ \theta \in [ 0, \pi ]$ be the angle between $ \nu $ and $ n$. 
We have two cases: 

\vspace{0.2cm}

\noindent {Case 1}. $ \nabla_{_S} \eta = 0 $ at $ p$. In this case, $ \nabla u = (\p_\nu u) \, \nu $, and
\be \label{eq-nu-phi-1}
\begin{split}
\p_\nu \phi 
=  \frac{ \p_\nu u  }{\phi} \,  \nabla^2 u \left( \nu, \nu \right) 
= \frac{ \p_\nu u  }{\phi} \,  (  - H \p_\nu u - \Delta_{_S} \eta ) . 
\end{split}
\ee
Here we have used the fact that $ \Delta u = 0 $ implies 
\be \label{eq-harmonic-bdry}
\nabla^2 u ( \nu, \nu) = - H \p_\nu u - \Delta_{_S} \eta . 
\ee
We rewrite \eqref{eq-nu-phi-1} as
\be \label{eq-nu-phi-2}
\begin{split}
\p_\nu \phi =  - \frac{ | \nabla u |^2   }{\phi} H  -  \frac{ \p_\nu u  }{\phi}  \Delta_{_S} \eta  . 
\end{split}
\ee

\vspace{0.2cm}

\noindent {\bf Case 2}.  $\nabla_\Sigma \eta \ne 0 $ at $p$.  In this case, we introduce the following notations near $p$:

\begin{itemize}

\item $ \tau_t   = \eta^{-1} (t)$ and $ \Sigma_t = u^{-1}(t)$; 

\item $ \tau_t'$ denotes a unit tangent vector to the curve $\tau_t $;

\item $ n_t = \frac{1}{ | \nabla_{_S} \eta  |} \nabla_{_S} \eta $, which is a unit normal to $\tau_t$ in $S$;

\item $ \nu_{t} $ denotes the outward unit normal to $ \tau_t $ within $ \Sigma_t$.

\end{itemize}

Along the curve $ \tau_t$, 
$ \{ \nu, n_t  \} \ \text{and} \ \{ \nu_t , n \} $
both form an orthonormal basis of the normal bundle $ {\tau_t'}^\perp$,
and
\be \label{eq-theta-1-2}
\cos \theta =  \la \nu, n \ra  = \frac{ \p_\nu u  }{| \nabla u |}  \ \ 
\text{and} \ \ 
 | \nabla_{_S} \eta | = | \nabla u |  \sin \theta . 
\ee
We have
\be
\begin{split}
\p_\nu \phi = & \ \frac{ | \nabla u | }{\phi} \,  \nabla^2 u \left( \nu,  n \right) \\
= & \ \frac{ | \nabla u | }{\phi} \,  \nabla^2 u ( \nu, n_t \sin \theta  +  \nu  \cos \theta) \\
= & \  \frac{ | \nabla u | }{\phi} \, \sin \theta \, \nabla^2 u ( \nu, n_t )  + \frac{ | \nabla u | }{\phi} \, \cos \theta \, \nabla^2 u ( \nu, \nu)  .
\end{split}
\ee
Let $ \Pi (v, w) = \la \nabla_v \nu, w \ra $ denote the second fundamental form of $S$ with respect to $\nu$. Then
\be
\begin{split}
\nabla^2 u ( \nu, n_t) = &  \ 
\la \nabla_{n_t} \nabla u , \nu ) \\
= & \ \la \nabla_{n_t} \nabla_{_S} \eta , \nu) + \la \nabla_{n_t} (\p_\nu u) \nu , \nu \ra \\
= & \ - \Pi ( \nabla_{_S} \eta , n_t) + n_t ( \p_\nu u ) .
\end{split}
\ee
Therefore,
\be
\sin \theta \, \nabla^2 u ( \nu, n_t )  
 =  - \Pi ( n_t , n_t) | \nabla_{_S } \eta | \sin \theta + n_t ( \p_\nu u ) \sin  \theta .
\ee
Thus,
\be \label{eq-nu-phi-3}
\begin{split}
 \p_\nu \phi = & \  \frac{ | \nabla u | }{\phi} \, \left[ - \Pi ( n_t , n_t) | \nabla_{_S } \eta | \sin \theta + n_t ( \p_\nu u ) \sin  \theta \right. \\
 & \ \left. + \left(  - H \p_\nu u - \Delta_{_S} \eta \right) \cos \theta \right] . 
\end{split}
\ee
Here we also have used \eqref{eq-harmonic-bdry}.

\bigskip

Now let $\kappa$ denote the geodesic curvature of $\tau_t = \Sigma_t \cap S $ in $\Sigma_t$ with respect to $\nu_t$, 
that is
\be
\kappa =  \la \nabla_{ \tau_t'} \nu_t , \tau_t ' \ra = - \la \nabla_{ \tau_t'} \tau'_t , \nu_t , \ra 
\ee
Using the fact  $ \nu_t = - \cos \theta \, n_t + \sin \theta \, \nu $, 
we have
\be
\begin{split}
 \kappa 
= & \ - \la \nabla_{\tau_t ' } \tau_t' , - \cos \theta \, n_t + \sin \theta \, \nu \ra \\
= & \  \sin \theta \, \Pi (\tau_t', \tau_t' ) + \cos \theta \, \la \nabla_{\tau_t'} \tau_t', n_t \ra \\ 
= & \  \sin \theta \, \left[ H - \Pi (n_t, n_t) \right]  + \cos \theta \, \la \nabla_{\tau'} \tau', n_t \ra .
\end{split}
\ee
Or equivalently, 
\be \label{eq-kappa-H}
- \Pi (n_t, n_t) \sin \theta =  \kappa - \sin \theta \, H  -  \cos \theta \, \la \nabla_{\tau'} \tau', n_t \ra .
\ee
It follows from \eqref{eq-nu-phi-3}, \eqref{eq-kappa-H} and \eqref{eq-theta-1-2} that
\be \label{eq-nu-phi-4}
\begin{split}
\p_\nu \phi = & \  \frac{ | \nabla u | }{\phi} \, \left[ \left( \kappa - \sin \theta \, H  -  \cos \theta \, \la \nabla_{\tau'} \tau', n_t \ra \right) 
 | \nabla_{_S } \eta | + n_t ( \p_\nu u ) \sin  \theta \right. \\
 & \ \left. + \left(  - H \p_\nu u - \Delta_{_S} \eta \right) \cos \theta \right] \\
= & \  \frac{ | \nabla u | }{\phi} \, \left[  \kappa | \nabla_{_S } \eta | - H | \nabla u | + n_t ( \p_\nu u ) \sin  \theta \right. \\
 & \ \left. - \left( \Delta_{_S} \eta +  \la \nabla_{\tau'} \tau', \nabla_{_S} \eta  \ra  \right) \cos \theta \right] .
\end{split}
\ee
Observe that
\be
| \nabla u |  n_t ( \p_\nu u ) \sin  \theta = \la \nabla_{_S} \p_\nu u, \nabla_{_S} \eta  \ra 
\ee
and
\be
\Delta_{_S} \eta +  \la \nabla_{\tau'} \tau', \nabla_{_S} \eta  \ra = \nabla^2_{_S} \eta ( n_t , n_t) . 
\ee
We can rewrite \eqref{eq-nu-phi-4} as
\be
\p_\nu \phi =   \frac{ | \nabla u | }{\phi}   \kappa | \nabla_{_S } \eta | -   \frac{ | \nabla u |^2 }{\phi}  H 
+  \frac{ 1 }{\phi}  \la \nabla_{_S} \p_\nu u, \nabla_{_S} \eta  \ra  
- \frac{ \p_\nu u }{\phi}  \nabla^2_{_S} \eta ( n_t , n_t)   .
\ee
This finishes the proof.
\end{proof}

The next proposition gives an analogue of Proposition 4.2 in \cite{BKKS} with no boundary condition imposed.

\begin{proposition} \label{prop-bdry-1}
Let $ (\Omega, g)$ be a $3$-dimensional compact, oriented Riemannian manifold with smooth boundary $\p \Omega$.
Here $ \p \Omega $ is not necessarily connected. Let $ u $ be a harmonic function on $\Omega$. 
Then
\be \label{eq-prop-bdry}
\begin{split}
& \ \int_{\Omega}  \frac{1}{2} \left[ \frac{ | \nabla^2 u |^2 }{  | \nabla u |  }  +  R | \nabla u |   \right] 
  -  2 \pi \int_{u_{min} }^{ u_{max} } \chi(\Sigma_t)  \, dt   \\
\le & \ -  \int_{\p \Omega} H | \nabla u |   
  +  \int_{ \{  \nabla_{_{\p \Omega} } u \ne 0 \} }  \left \la \nabla_{_{\p \Omega}} \p_\nu u, \frac{ \nabla_{_{\p \Omega} } u  }{ | \nabla u |} \right \ra  
  - \frac{ \p_\nu u }{ | \nabla u | }    \nabla^2_{_{ \p \Omega } } u \left( \frac{ \nabla_{_{ \p \Omega} } u }{ | \nabla_{_{\p \Omega}} u | }   , \frac{ \nabla_{_{ \p \Omega} } u }{ | \nabla_{_{\p \Omega}} u | }  \right) .
\end{split}
\ee
Here $R$ is the scalar curvature of $g$; $\Sigma_t = u^{-1}(t)$ is the level set of $u$; $\chi (\Sigma_t)$ denotes the Euler characteristic of $\Sigma_t$ if $\Sigma_t$ is a regular surface;
$u_{min} = \min_\Omega u $, $ u_{max} = \max_\Omega u$; and 
$H$ is the mean curvature of $ \p \Omega$ with respect to the outward unit normal $\nu$.
\end{proposition}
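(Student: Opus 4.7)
The plan is to adapt the Bray--Kazaras--Khuri--Stern calculation to manifolds with boundary, combining Stern's level-set identity with the boundary derivative formulas of Lemma \ref{lem-bdry-g}. First, regularize $|\nabla u|$ by $\phi = \phi_\epsilon := \sqrt{|\nabla u|^2 + \epsilon}$; for harmonic $u$, the Bochner formula gives
\begin{equation*}
\phi \Delta \phi + |\nabla \phi|^2 = |\nabla^2 u|^2 + \Ric(\nabla u, \nabla u),
\end{equation*}
and the divergence theorem turns $\int_\Omega \Delta \phi$ into $\int_{\p \Omega} \p_\nu \phi$, which is precisely what Lemma \ref{lem-bdry-g} evaluates case by case.

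The computational heart of the argument is the pointwise identity, valid on $\{|\nabla u| > 0\}$ at any regular level set $\Sigma_t = u^{-1}(t)$:
\begin{equation*}
\frac{1}{2}\left(\frac{|\nabla^2 u|^2}{|\nabla u|} + R\,|\nabla u|\right) = K_{\Sigma_t}\,|\nabla u| + \Delta|\nabla u|.
\end{equation*}
I would derive it by decomposing $|\nabla^2 u|^2$ into its tangential-tangential, mixed, and normal-normal parts with respect to the splitting $T\Omega|_{\Sigma_t} = T\Sigma_t \oplus \mathbb{R}\,n$ with $n = \nabla u/|\nabla u|$: harmonicity forces $\nabla^2 u(n,n) = -H_{\Sigma_t}|\nabla u|$, the mixed block carries $\nabla_{\Sigma_t}|\nabla u|$, and the tangent-tangent block has squared norm $|A|^2|\nabla u|^2$ for $A$ the second fundamental form of $\Sigma_t$. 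Using the Gauss equation $2K_{\Sigma_t} = R - 2\Ric(n,n) + H_{\Sigma_t}^2 - |A|^2$ to eliminate $|A|^2$, the identity falls out after comparison with the Bochner expression $\Delta|\nabla u|/|\nabla u| = |A|^2 + |\nabla_{\Sigma_t}|\nabla u||^2/|\nabla u|^2 + \Ric(n,n)$.

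Multiplying by $|\nabla u|$ (with $\phi_\epsilon$ replacing $|\nabla u|$ throughout for rigor) and integrating over $\Omega$, the coarea formula rewrites $\int_\Omega K_{\Sigma_t}|\nabla u|\,dV$ as $\int dt \int_{\Sigma_t} K_{\Sigma_t}$; Gauss--Bonnet then produces $2\pi\int \chi(\Sigma_t)\,dt$ together with a geodesic curvature boundary term, which a further coarea on $\p \Omega$ (with respect to $\eta = u|_{\p\Omega}$) converts into $\int_{\{\nabla_{\p\Omega}u \neq 0\}}\kappa\,|\nabla_{\p\Omega}u|\,dA$, where $\kappa$ is exactly the geodesic curvature appearing in case~(3) of Lemma \ref{lem-bdry-g}. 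The term $\int_\Omega \Delta|\nabla u|\,dV$ becomes $\int_{\p\Omega}\p_\nu|\nabla u|\,dA$ and, via Lemma \ref{lem-bdry-g}, contributes $-\int_{\p \Omega} H|\nabla u|$ plus the case-(3) expression $+\kappa|\nabla_{\p\Omega}u|$ together with the tangential terms on the right of \eqref{eq-prop-bdry}; the two copies of $\kappa|\nabla_{\p\Omega}u|$ cancel exactly, leaving the stated right-hand side.

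The remaining work is to justify the limit $\epsilon \to 0$. Sard's theorem ensures $\chi(\Sigma_t)$ is defined for a.e.\ $t$, and monotone convergence handles $|\nabla^2 u|^2/\phi_\epsilon \nearrow |\nabla^2 u|^2/|\nabla u|$ in the bulk. The most delicate point, which I expect to be the main obstacle, is the case-(2) region $\{\nabla u \ne 0,\,\nabla_{\p\Omega} u = 0\}$: this set is generically codimension two in $\p\Omega$ and hence of surface measure zero, but in non-generic situations it sits inside a level set of $\eta$, forcing $\Delta_{\p\Omega}\eta$ to vanish on its interior; controlling the potential contribution of its boundary is what accounts for the non-strict inequality $\le$ (rather than equality) in \eqref{eq-prop-bdry}.
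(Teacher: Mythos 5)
Your proposal is essentially the paper's argument: regularize $|\nabla u|$ by $\phi_\epsilon$, apply Bochner plus Stern's level-set identity to the bulk, apply Lemma~\ref{lem-bdry-g} to the boundary integral $\int_{\p\Omega}\p_\nu\phi$, integrate over level sets via coarea and Gauss--Bonnet, and observe the exact cancellation of the two $\kappa|\nabla_{\p\Omega}u|$ contributions. Your pointwise identity $\tfrac{1}{2}(|\nabla^2 u|^2/|\nabla u| + R|\nabla u|) = K_{\Sigma_t}|\nabla u| + \Delta|\nabla u|$ is correct (I checked it against the Gauss equation and the tangential/normal decomposition of $\nabla^2 u$), and it is precisely the $\epsilon=0$ form of Stern's formula (14), which the paper cites directly rather than rederives.

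One point of your analysis is off: you attribute the non-strict inequality $\le$ in \eqref{eq-prop-bdry} to "controlling the potential contribution of the boundary" of the case-(2) set $\{\nabla_{\p\Omega}u = 0,\ \p_\nu u \neq 0\}$. In fact this set contributes nothing to the limit: the paper observes that since $u|_{\p\Omega}$ is $C^2$, the subset $\{\nabla_{\p\Omega}u = 0,\ \Delta_{\p\Omega}u \ne 0\}$ has surface measure zero (the standard fact that second derivatives of a $C^2$ function vanish a.e.\ on the critical set of the gradient), so the entire $I_1$ integral vanishes exactly, with no genericity or codimension argument required. The non-strict $\le$ comes from the interior: the $\epsilon$-regularized version of Stern's formula is an inequality (the missing term $\tfrac{1}{4\phi^2}|\nabla|\nabla u|^2|^2 \le |\nabla|\nabla u||^2$), and, more importantly, the coarea/Gauss--Bonnet step can only be applied on the complement of an open neighborhood $A$ of the critical values, with the contribution over $u^{-1}(A)$ estimated crudely by $\max|\Ric|\int_{u^{-1}(A)}|\nabla u|$. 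Sending $|A| \to 0$ recovers only a one-sided bound via Fatou. Your outline would still yield the correct statement, but the source of the inequality is the interior critical-set handling, not the boundary.
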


\begin{proof}
Given $\epsilon > 0$, let $ \phi = \sqrt{ | \nabla u |^2 + \epsilon}$. 
Integrating $\Delta \phi$ on $\Omega$ and applying Lemma \ref{lem-bdry-g}, we have
\be
\int_\Omega \Delta \phi =  \int_{\p \Omega} \p_\nu \phi 
=  \int_{\p \Omega \cap \{ \nabla u \neq 0 \} } \p_\nu \phi 
=  I_1 + I_2 ,
\ee
where 
\be
I_1 =  \int_{\{  \nabla_{_{\p \Omega}} u  = 0 , \ \p_\nu u \ne 0 \} } \p_\nu \phi
=  \int_{\{  \nabla_{_{\p \Omega}} u  = 0 , \ \p_\nu u \ne 0 \} } - \frac{ | \nabla u |^2   }{\phi} H  -  \frac{ \p_\nu u  }{\phi}  \Delta_{_{\p \Omega} } u ,
\ee
and
\be
\begin{split}
I_2 = & \  \int_{ \{  \nabla_{_{\p \Omega} } u \ne 0 \} } \p_\nu \phi \\
= & \  \int_{ \{  \nabla_{_{\p \Omega} } u \ne 0 \} }
 -   \frac{ | \nabla u |^2 }{\phi}  H  + \frac{ | \nabla u | }{\phi}   \kappa | \nabla_{_{\p \Omega }} u |  +  \frac{ 1 }{\phi}  \la \nabla_{_{\p \Omega}} \p_\nu u, \nabla_{_{\p \Omega} } u  \ra  \\
& \ \ \ \ \ \ \ \ \ \ \ \ \ \ \ - \frac{ \p_\nu u }{\phi}  | \nabla_{_{\p \Omega}} u |^{-2} \,  \nabla^2_{_{ \p \Omega } } u ( \nabla_{_{ \p \Omega} } u  , \nabla_{_{\p \Omega} } u ) .
\end{split}
\ee
Here $ | \nabla u |$, $H$, $\Delta_{_{\p \Omega} } u $, $ | \nabla_{_{\p \Omega} } \p_\nu u |$, and $ | \nabla^2_{_{\p \Omega} } u |$ are all
 bounded quantities on $ \p \Omega$, independent on $\epsilon$.  Moreover, by  item 3 in Lemma \ref{lem-bdry-g}, 
\be
 \frac{ | \nabla u | }{\phi}   \kappa | \nabla_{_{\p \Omega}  } u |
 =  \p_\nu \phi +   \frac{ | \nabla u |^2 }{\phi}  H  -  \frac{ 1 }{\phi}  \la \nabla_{_{\p \Omega} } \p_\nu u, \nabla_{_{\p \Omega} } u  \ra  
+  \frac{ \p_\nu u }{\phi}  | \nabla_{_{\p \Omega} } u |^{-2} \,  \nabla^2_{_{\p \Omega} } u ( \nabla_{_{\p \Omega} } u  , \nabla_{_{\p \Omega}} u ) 
\ee
on $ \{ \nabla_{_{\p \Omega} } u \neq 0 \} \subset \p \Omega$.
This, together with  $ | \p_\nu \phi | \le | \nabla \phi | \le | \nabla^2 u | $, shows $ \frac{ | \nabla u | }{\phi}   \kappa | \nabla_{_{\p \Omega} } u | $ 
is bounded by a constant independent on $\epsilon$ as well. 
Therefore, by dominated convergence theorem, 
\be
\begin{split}
\lim_{\epsilon \to 0}  \int_{\p \Omega} \p_\nu \phi = & \ 
\int_{\{  \nabla_{_{\p \Omega}} u  = 0 , \ \p_\nu u \ne 0 \} } - H | \nabla u |   -  \frac{ \p_\nu u  }{|\nabla u |}  \Delta_{_{\p \Omega} } u \\
 & \ +  \int_{ \{  \nabla_{_{\p \Omega} } u \ne 0 \} }
 -  H | \nabla u |  +   \kappa | \nabla_{_{\p \Omega }} u |  +  \frac{ 1 }{| \nabla u | }  \la \nabla_{_{\p \Omega}} \p_\nu u, \nabla_{_{\p \Omega} } u  \ra  \\
& \ \ \ \ \ \ \ \ \ \ \ \ \ \ \ \ \  - \frac{ \p_\nu u }{ | \nabla u | }  | \nabla_{_{\p \Omega}} u |^{-2} \,  \nabla^2_{_{ \p \Omega } } u ( \nabla_{_{ \p \Omega} } u  , \nabla_{_{\p \Omega} } u ) .
\end{split}
\ee
Combining the terms involving $H$ and applying the coarea formula to the term involving $\kappa$, we can rewrite the above as
\be \label{eq-bdry-gg-0}
\begin{split}
\lim_{\epsilon \to 0}  \int_{\p \Omega} \p_\nu \phi = & \ - \int_{\p \Omega} H | \nabla u | 
+ \int_{u_{min}}^{u_{max} } \left( \int_{ \{  \nabla_{_{\p \Omega} } u \ne 0 \}\cap \Sigma_t } \kappa \right) \, d t \\
& \  + \int_{\{  \nabla_{_{\p \Omega}} u  = 0 , \ \p_\nu u \ne 0 \} }  -  \frac{ \p_\nu u  }{|\nabla u |}  \Delta_{_{\p \Omega} } u \\
 & \ +  \int_{ \{  \nabla_{_{\p \Omega} } u \ne 0 \} } \la \nabla_{_{\p \Omega}} \p_\nu u, \frac{ \nabla_{_{\p \Omega} } u  }{ | \nabla u |}  \ra  
  - \frac{ \p_\nu u }{ | \nabla u | }    \nabla^2_{_{ \p \Omega } } u ( \frac{ \nabla_{_{ \p \Omega} } u }{ | \nabla_{_{\p \Omega}} u | }   , \frac{ \nabla_{_{ \p \Omega} } u }{ | \nabla_{_{\p \Omega}} u | }  ) .
\end{split}
\ee
Under an assumption $u$ is $C^2$ on $\Sigma$, 
elementary arguments shows the set 
$$ \{ x \in \Sigma \ | \  \nabla_{_{\p \Omega}} u  = 0 , \, \Delta_{_{\p \Omega} } u \neq 0 \} $$
is a set of measure zero in $\Sigma$. 
Hence, the integral in the middle line above vanishes and 
\eqref{eq-bdry-gg-0} simplifies to 
\be \label{eq-bdry-gg}
\begin{split}
\lim_{\epsilon \to 0}  \int_{\p \Omega} \p_\nu \phi = & \ - \int_{\p \Omega} H | \nabla u | 
+ \int_{u_{min}}^{u_{max} } \left( \int_{ \{  \nabla_{_{\p \Omega} } u \ne 0 \}\cap \Sigma_t } \kappa \right) \, d t \\
 & \ +  \int_{ \{  \nabla_{_{\p \Omega} } u \ne 0 \} } \la \nabla_{_{\p \Omega}} \p_\nu u, \frac{ \nabla_{_{\p \Omega} } u  }{ | \nabla u |}  \ra  
  - \frac{ \p_\nu u }{ | \nabla u | }    \nabla^2_{_{ \p \Omega } } u ( \frac{ \nabla_{_{ \p \Omega} } u }{ | \nabla_{_{\p \Omega}} u | }   , \frac{ \nabla_{_{ \p \Omega} } u }{ | \nabla_{_{\p \Omega}} u | }  ) .
\end{split}
\ee

Next, we analyze the interior terms following the argument of Theorem 1.1 in \cite{S}.
First, by Bochner's  formula, 
\be
\begin{split}
\Delta \phi = & \ \frac{1}{\phi} \left[ | \nabla^2 u |^2 + \Ric (\nabla u, \nabla u) - \frac{1}{4 \phi^2} | \nabla | \nabla u |^2 |^2 \right] \\
\ge & \ \phi^{-1} \Ric (\nabla u , \nabla u ) .
\end{split}
\ee

Let $ \mathcal{C} $ be the union of critical values of both $u: \Omega \to \R$ and $ u |_{_{\p \Omega}}: \p \Omega \to \R $. By Sard's theorem, $\mathcal{C}$ 
is a set of measure zero. 
Let $ A $ be an open set of $[ u_{min}, u_{max}]$ 
so that $ A$ contains $\mathcal{C}$.
Let $ B $ be the complement of $ A$ in $ [ u_{min}, u_{max} ] $.
By formula (14) in \cite{S}, along a regular level set $\Sigma_t$ of $u$,
\be
\Delta \phi \ge \frac{1}{2 \phi} \left[ | \nabla^2 u |^2 + | \nabla u |^2 ( R - 2 K_{_\Sigma} ) \right] ,
\ee
where $ K_{_\Sigma}$ is  the Gauss  curvature of $\Sigma_t$.
Thus,
\be \label{eq-interior-1}
\begin{split}
 \int_\Omega \Delta \phi\ge & \ \int_{u^{-1}(B)}  \frac{1}{2 \phi} \left[ | \nabla^2 u |^2 + | \nabla u |^2 ( R - 2 K_{_\Sigma} ) \right] \\
& \   -  \max_{\Omega} | \Ric |  \int_{u^{-1}(A) } | \nabla u | \, d t .
\end{split}
\ee
As $ K_{_\Sigma}$ is bounded by a constant independent on $\epsilon$ on the compact set $ u^{-1} (B)$, passing  to the limit 
in \eqref{eq-interior-1} gives
\be \label{eq-interior-2}
\begin{split}
\lim_{\epsilon \to 0} \int_{\Omega} \Delta \phi \ge  & \ \int_{u^{-1}(B)}  \frac{1}{2 | \nabla u | } \left[ | \nabla^2 u |^2 + | \nabla u |^2 ( R - 2 K_{_\Sigma} ) \right] \\
& \   -  \max_{\Omega} | \Ric |  \int_{u^{-1}(A) } | \nabla u | \, d t .
\end{split}
\ee
By coarea formula, this can be written as
\be \label{eq-interior-2-2}
\begin{split}
\lim_{\epsilon \to 0} \int_{\Omega} \Delta \phi \ge & \  \int_{B} \left( \int_{\Sigma_t}  \frac{1}{2 | \nabla u |^2 } \left[ | \nabla^2 u |^2 + | \nabla u |^2 R  \right] \right) \, dt  \\
 & \  - \int_{B}  \left(  \int_{\Sigma_t} K_{_\Sigma}  \right) \, d t  - \left( \max_{\Omega} | \Ric | \right)  \int_{A} | \Sigma_t | \, d t.
\end{split}
\ee
As $ t \in B$ is a regular value for both $ u $ and $ u|_{_{\p \Omega}}$,  $ \Sigma_t$ is a compact surface with boundary satisfying
$$ \p \Sigma_t = \Sigma_t \cap \p \Omega = \Sigma_t \cap \{ \nabla_{_{\p \Omega} } u \ne 0 \} . $$
By the Gauss-Bonnet theorem, 
\be \label{eq-interior-3}
\begin{split}
\lim_{\epsilon \to 0} \int_{ \Omega} \Delta  \phi \ge & \  \int_{B} \left( \int_{\Sigma_t}  \frac{1}{2 | \nabla u |^2 } \left[ | \nabla^2 u |^2 + | \nabla u |^2 R  \right] \right) \, dt  \\
 & \  - \int_{B}  \left(  2 \pi \chi(\Sigma_t) - \int_{\Sigma_t \cap \{ \nabla_{_{\p \Omega} } u \ne 0 \}} \kappa  \right) \, d t  - \left( \max_{\Omega} | \Ric | \right)  \int_{A} | \Sigma_t | \, d t.
\end{split}
\ee
Since  $ \int_{ u_{min} }^{{u_{max} } }  | \Sigma_t | \, d t =  \int_{\Omega} | \nabla u |  < \infty$,  \eqref{eq-prop-bdry} now follows from \eqref{eq-bdry-gg} and  \eqref{eq-interior-3} by letting the measure of $ A$ tend to zero. 
\end{proof}

\begin{remark} \label{rem-boundary-angle}
We learn from Daniel Stern that, at points $ \nabla_{_{\p \Omega} } u \neq 0 $, 
the integrand in the second boundary integral in  \eqref{eq-prop-bdry} can be written as
\be \label{eq-angle-Stern}
\begin{split}
& \ \left \la \nabla_{_{\p \Omega}} \p_\nu u, \frac{ \nabla_{_{\p \Omega} } u  }{ | \nabla u |} \right \ra  
  - \frac{ \p_\nu u }{ | \nabla u | }    \nabla^2_{_{ \p \Omega } } u \left( \frac{ \nabla_{_{ \p \Omega} } u }{ | \nabla_{_{\p \Omega}} u | }   , \frac{ \nabla_{_{ \p \Omega} } u }{ | \nabla_{_{\p \Omega}} u | }  \right) \\
= & \ \left \la \frac{ | \nabla_{_{\p \Omega} } u | \nabla_{_{\p \Omega} } \p_\nu u - \p_\nu u \nabla_{_{\p \Omega} } | \nabla_{_{\p \Omega}}  u | }
{ | \nabla u |^2} , \frac{ \nabla_{_{\p \Omega}} u }{ | \nabla_{_{\p \Omega}} u | } \right \ra  | \nabla u | \\   
= & \ \left \la \nabla_{_{\p \Omega}} \beta , \frac{ \nabla_{_{\p \Omega}} u }{ | \nabla_{_{\p \Omega}} u |}  \right \ra | \nabla u |,
\end{split}
\ee
where $ \displaystyle  \beta = \frac{\pi}{2} - \theta $ is the angle between $\nabla u $ and the boundary surface $\p \Omega$. 
It is intriguing  if this interpretation can yield relevant geometric boundary conditions for harmonic functions. 
\end{remark}

Next we consider the mass of an asymptotically flat $3$-manifold with boundary.
Adopting conventions from \cite{Bartnik}, we say 
a Riemannian manifold $(M^3, g)$ is asymptotically flat if there is a compact set $K \subset M$
and a diffeomorphism $ \Psi: M \setminus K \rightarrow \R^3 \setminus B_r  $, where $ B_r = \{ | x | \le r \}$ for some $ r $,
such that 
\be \label{eq-g-AF}
(\Psi^{-1})^*(g)_{ij} - \delta_{ij} \in W^{k,p}_{-\tau} ( \R^3 \setminus B_r  ),
\ee
where  $ k \ge 2 $, $ p > 3$ and $ \tau \in (\frac12 , 1] $.
Here $ W^{k,p}_\delta ( \R^3 \setminus B_r )$ denotes the weighted Sobolev space 
consisting of functions $ u \in W^{k,p}_{loc} ( \R^3 \setminus B_r )$ such that
\be
|| u ||_{W^{k,p}_{\delta} ( \R^3 \setminus B_r ) } =\sum_{|\beta|\leq k} 
||  \partial^\beta u \, |x| ^{|\beta| - \d -\frac{3}{p} } ||_{L^p (\R^3\setminus B_r )} < \infty. 
\ee
If the scalar curvature $R$ of $g$ is integrable,   the ADM mass \cite{ADM} of $(M,g)$ 
is defined and given by
\be
\m = \lim_{r \to \infty} \frac{1}{16 \pi}  \int_{S_r} (g_{ij,i} - g_{ii,j} ) v^j .
\ee
Here $ S_r = \{ | x | = r \} $ and $ v$ denotes the outward unit normal to $S_r$. 
The fact that $\m$ is a geometric invariant of $(M, g)$, independent on the choice of the coordinates,
was shown by Bartnik \cite{Bartnik} and by Chru\'{s}ciel \cite{C} independently.

\begin{proposition} \label{prop-mass-bdry}
Let $(M^3, g)$ be an asymptotically flat $3$-manifold with boundary $\Sigma$. 
Given  a function $\eta$ on $\Sigma$, let $u $ be the harmonic function on $M$ such that $ u = \eta $ on $ \Sigma$ and 
$ u $ is asymptotic to some coordinate function $x_i$ at the infinity.  Let $ \Sigma_t = u^{-1}(t)$ be the level set of $u$.
Let $\m $ be the mass of $(M, g)$ and  $\mu $ be the infinity pointing 
unit normal to $\Sigma$. Then

\be \label{eq-mass-bdry}
\begin{split}
& \ 8 \pi \m +  2 \pi  \int_{- T }^{T}  ( \chi(\Sigma_t) - 1 )  \, dt \\
\ge & \ \int_{M}  \frac{1}{2} \left[ \frac{ | \nabla^2 u |^2 }{  | \nabla u |  }  +  R | \nabla u |   \right] 
 - \int_{\Sigma} H | \nabla u |  \\
 & \ +  \int_{ \{  \nabla_{_{\Sigma} } \eta \ne 0 \} } \left \la \nabla_{_{\Sigma}} \p_\mu u, \frac{ \nabla_{_{\Sigma} } \eta  }{ | \nabla u |}  \right \ra  
  - \frac{ \p_\mu u }{ | \nabla u | }    \nabla^2_{_{ \Sigma } } \eta  \left( \frac{ \nabla_{_{ \Sigma} } \eta  }{ | \nabla_{_{\Sigma}} \eta | } , 
  \frac{ \nabla_{_{ \Sigma} } \eta  }{ | \nabla_{_{\Sigma}} \eta | }  \right) .
\end{split}
\ee
Here $T >0 $ is some constant satisfying $ \chi(\Sigma_t) = 1 $ for all $ | t | \ge T$, 
$ R$ is the scalar curvature of $g$, and $H$ is the mean curvature of $\Sigma$ with respect to $\mu$.
\end{proposition}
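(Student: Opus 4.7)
The strategy is to apply Proposition \ref{prop-bdry-1} to the truncated compact domain $\Omega_r := \{ x \in M : |x| \le r \}$ for $r$ large enough that $\Sigma \subset \{ |x| < r\}$, and then send $r \to \infty$, extracting the ADM mass from the asymptotic contribution on the outer coordinate sphere $S_r = \{|x| = r\}$. The boundary $\partial \Omega_r$ has two components, $\Sigma$ and $S_r$. The outward unit normal of $\Omega_r$ at $\Sigma$ is $-\mu$ (since $\mu$ is $\infty$-pointing), while at $S_r$ it is the standard Euclidean outward normal $v$. Because the mean curvature and the tangential boundary integrand in Proposition \ref{prop-bdry-1} both depend on the outward normal through $\partial_\nu u$, the sign reversal at $\Sigma$ converts the $\Sigma$-contribution into exactly $-\int_\Sigma H |\nabla u|$ together with the $\Sigma$-tangential integrand written in \eqref{eq-mass-bdry}.

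Standard elliptic theory gives $u \in C^2$ up to $\Sigma$, while asymptotic flatness with $\tau > 1/2$ together with $u = x_i + O(|x|^{1-\tau})$ implies that the interior integrand $\tfrac{1}{2}(|\nabla^2 u|^2/|\nabla u| + R |\nabla u|)$ decays like $O(|x|^{-2-2\tau})$ and is integrable on $M$, so the $\Omega_r$ interior contribution converges to the full interior integral on $M$ as $r \to \infty$. The core computation is then the asymptotic analysis on $S_r$. In the flat Euclidean model $(g_0, u_0 = x_i)$ on an exterior region, a direct spherical calculation using the polar angle from the $x_i$-axis and the standard integrals of $\sin^3\theta$ and $\cos^2\theta\sin\theta$ verifies the identity
\[
-\int_{S_r} H_0 |\nabla u_0|\,+\,\mathcal{B}^{(0)}_{S_r}\,+\,2\pi\bigl(u_{0,\max}-u_{0,\min}\bigr)\Big|_{S_r}\,=\,0,
\]
where $\mathcal{B}^{(0)}_{S_r}$ denotes the Euclidean value of the tangential boundary integrand of Proposition \ref{prop-bdry-1}. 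Subtracting this vanishing Euclidean identity and expanding the $S_r$ integrand of $(M,g)$ in the deviations $g_{ij}-\delta_{ij}$ and $u-x_i$, the leading order of the mean curvature integral reproduces the ADM flux density $(g_{ij,i}-g_{ii,j})v^j$, while all remaining contributions are pointwise $O(|x|^{-1-\tau})$ and integrate to $O(r^{1-\tau})$ on $S_r$; hence
\[
\lim_{r\to\infty}\Bigl(-\int_{S_r}H|\nabla u|+\mathcal{B}_{S_r}+2\pi(u_{\max}^{(r)}-u_{\min}^{(r)})\Bigr)=8\pi\,\m(g).
\]
It remains to reconcile this with the Euler characteristic integral from Proposition \ref{prop-bdry-1}. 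By the choice of $T$, for $|t|\ge T$ the level set $\Sigma_t\subset M$ is asymptotic to an affine plane in the chart and is a topological disk, so $\chi(\Sigma_t)=1$. For large $r$ the truncation $\Sigma_t\cap\Omega_r$ differs from $\Sigma_t$ only by disk-like caps and preserves the Euler characteristic, giving
\[
2\pi\int_{u_{\min}^{(r)}}^{u_{\max}^{(r)}}\chi(\Sigma_t\cap\Omega_r)\,dt=2\pi\int_{-T}^{T}\chi(\Sigma_t)\,dt+2\pi\bigl(u_{\max}^{(r)}-u_{\min}^{(r)}-2T\bigr)+o(1).
\]
Combining the two limits, the linearly divergent pieces cancel and the surviving finite residue $8\pi\m(g)+2\pi\int_{-T}^T(\chi(\Sigma_t)-1)\,dt$ appears as the right-hand side of \eqref{eq-mass-bdry}.

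The main obstacle is the simultaneous treatment of the two individually divergent quantities $-\int_{S_r}H|\nabla u|$ and $2\pi(u_{\max}^{(r)}-u_{\min}^{(r)})$: each grows linearly in $r$, and only their combination is finite. Verifying this cancellation while retaining enough sensitivity to identify the ADM mass requires careful bookkeeping of the $O(|x|^{-\tau})$ corrections in both $g_{ij}$ and $u$, and the observation that the cancellation is already realised pointwise in $r$ in the flat Euclidean model is precisely what makes the subtraction procedure work cleanly.
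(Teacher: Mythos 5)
Your approach is structurally different from the paper's. You apply Proposition~\ref{prop-bdry-1} to $\Omega_r = M \cap \{|x|\le r\}$ with coordinate spheres $S_r$ as the outer boundary and try to extract the ADM mass by a direct asymptotic expansion, subtracting off a Euclidean baseline. The paper instead passes to harmonic coordinates $(y_1,y_2,y_3)$ in which $u=y_1$ \emph{exactly} outside a compact set, takes the outer boundary to be the coordinate cylinder $S=\{y_1=\pm L\}\cup\{y_2^2+y_3^2=L\}$, and invokes the mass formula (6.27) from \cite{BKKS}, namely $\lim_{L\to\infty}\bigl[\int_S\partial_\mu|\nabla u| + \int_{-L}^L\bigl(2\pi-\int_{\Sigma_t\cap S}\kappa\bigr)dt\bigr]=8\pi\m$. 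With $u$ equal to a coordinate function, all the deviations $u-y_1$ that you have to estimate simply vanish near infinity, and the remaining computation is delegated to \cite{BKKS}.

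The gap in your version is in the error analysis on $S_r$. Your Euclidean identity $-\int_{S_r}H_0\,|\nabla u_0| + \mathcal{B}^{(0)}_{S_r} + 2\pi\,(u_{0,\max}-u_{0,\min})\big|_{S_r}=0$ is correct (I verified $-8\pi r + 4\pi r + 4\pi r = 0$), but after subtracting it you claim the residual is the ADM flux $(g_{ij,i}-g_{ii,j})v^j$ plus terms that are ``pointwise $O(|x|^{-1-\tau})$ and integrate to $O(r^{1-\tau})$.'' This does not close the argument: $r^{1-\tau}\not\to 0$ for the allowed range $\tau\in(\tfrac12,1)$, and is $O(1)$ for $\tau=1$. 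The ADM flux integral itself is only $O(r^{1-\tau})$ pointwise-by-scaling; its convergence for $\tau>\tfrac12$ relies on the divergence structure together with integrability of $R$, not on pointwise decay (this is the Bartnik--Chru\'{s}ciel argument). So to make your scheme work you would have to (i) exhibit the divergence structure in the combined boundary expression on $S_r$, effectively reproving a sphere-version of the BKKS identity (6.27), and (ii) control the sub-leading contributions of $\mathcal{B}_{S_r}$ and of $u_{\max}^{(r)}-u_{\min}^{(r)}$ to the same order, since these also carry corrections from $u-x_i$ of order $O(|x|^{-\tau})$. Also, the assertion that the leading correction in $-\int_{S_r}H|\nabla u|$ \emph{is} the ADM flux density requires justification: the mean curvature expansion of $S_r$ in $(M,g)$ is not term-by-term the ADM integrand, and the identification only holds for their integrals, modulo errors that again need the divergence structure. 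The paper's route --- harmonic coordinates, the cylinder, and a direct citation of (6.27) --- is specifically designed to avoid exactly these delicate estimates.
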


Formula \eqref{eq-mass-bdry} is a corollary of the result 
of Bray-Kazaras-Khuri-Stern (Section 6 of \cite{BKKS}) and the preceding Proposition \ref{prop-bdry-1}.  

\begin{proof} 
Suppose $u$ is asymptotic to $ x_1$. 
By constructing two other harmonic functions $u_2$ and $u_3$, asymptotic to $x_2$ and $x_3$, respectively, 
we may well assume $ u = y_1$ outside a compact set $K$, where  $ \{ y_1, y_2, y_3 \}$ are harmonic coordinates 
on $M \setminus K$ (see \cite{Bartnik} for instance).
This, in particular, implies that there exists a  constant $T> 0 $ such that $\chi (\Sigma_t) = 1 $ 
for all $ | t | \ge T_0 $.
For instance,  let $ D \subset M$ be a compact set such that $ M \setminus D=
\R^3 \setminus \{ (y_1, y_2, y_3) \ | \ | y_i | \le a, \, i = 1, 2, 3 \}$ for some constant $ a > 0$. 
Then,  for any $ t $ with $ |t | >  \max \{a, \max_{D} |u| \} $,   $ \Sigma_t$ is precisely 
the entire coordinate plane $ \{ y_1 = t \}$, hence $ \chi (\Sigma_t) =  1$ for such $t$.

For each large constant $ L > 0$, let $ S$ denote the cylindrical surface given by
$$ S = \left\{ y_1 = \pm L , \, y_2^2 + y_3^2 \le L  \right\} \cup \left\{  | y _1 | \le  L, \, y_2^2 + y_3^2 = L \right\} .$$
Let $ \Omega $ be the region bounded by $ S$ and $ \Sigma$. Let $ \phi = \sqrt{ | \nabla u |^2 + \epsilon }$ for $\epsilon > 0$.
Though $ \Omega $ does not have a smooth boundary, Stoke's theorem still holds on this $\Omega$. Thus,
\be \label{eq-u-u-ep}
\begin{split}
\int_S \p_\mu | \nabla u | =  \lim_{\epsilon \to 0} \int_{S} \p_\mu \phi = \lim_{\epsilon \to 0}  \int_{\Sigma} \p_\mu \phi + \lim_{\epsilon \to 0}  \int_\Omega \Delta \phi .
\end{split}
\ee
Here, by abuse of notations, $\mu$ also denotes the infinity pointing unit normal to $S$ and we use the fact $ | \nabla u | > 0 $ at $ S$ for  large $L$.
For the other boundary term $ \lim_{\epsilon \to 0}  \int_{\Sigma} \p_\mu \phi $ and the interior term 
$  \lim_{\epsilon \to 0}  \int_\Omega \Delta \phi $, we can apply \eqref{eq-bdry-gg} and \eqref{eq-interior-3}, respectively, to have
\be \label{eq-u-u-ep-1}
\begin{split}
& \ \lim_{\epsilon \to 0}  \int_{\Sigma} \p_\mu \phi + \lim_{\epsilon \to 0}  \int_\Omega \Delta \phi \\
\ge & \ 
\int_\Omega   \frac{1}{2 } \left[  \frac{ | \nabla^2 u |^2 } { | \nabla u |}  
+ R | \nabla u | \right]  \, dt   - \int_{\min_\Omega u }^{\max_\Omega u}   2 \pi \chi(\Sigma_t)  \, dt  + \int_{\min_\Omega u }^{\max_\Omega u} 
  \int_{\Sigma_t \cap S } \kappa  \, d t \\
& \   - \int_{\Sigma } H | \nabla u | 
-  \int_{\{  \nabla_{_{\Sigma }} u  = 0 , \ \p_\mu u \ne 0 \} } \frac{ \p_\mu u  }{|\nabla u |}  \Delta_{_{\Sigma} } u \\
 & \ +  \int_{ \{  \nabla_{_{\Sigma } } u \ne 0 \} } \left \la \nabla_{_{\Sigma}} \p_\mu u, \frac{ \nabla_{_{\Sigma} } u  }{ | \nabla u |}  \right \ra  
  - \frac{ \p_\mu u }{ | \nabla u | }    \nabla^2_{_{ \Sigma } } u \left( \frac{ \nabla_{_{ \Sigma} } u }{ | \nabla_{_{\Sigma}} u | }   , \frac{ \nabla_{_{ \Sigma} } u }{ | \nabla_{_{\Sigma}} u | } \right ) .
\end{split}
\ee
For large $L$,  observe that $ \max_{\Omega} u = \max_{S} u = L $ and $ \min_{\Omega} u = \min_{S} u = - L $.
Hence, by \eqref{eq-u-u-ep} and \eqref{eq-u-u-ep-1}, 
\be \label{eq-u-u-ep-2}
\begin{split}
& \ \int_{S} \p_\mu | \nabla u | +   \int_{-L }^{L}   \left( 2 \pi \chi(\Sigma_t) -  \int_{\Sigma_t \cap S } \kappa  \right) \, d t \\
\ge & \ 
\int_\Omega   \frac{1}{2 } \left[  \frac{ | \nabla^2 u |^2 } { | \nabla u |}  
+ R | \nabla u | \right]  \, dt     - \int_{\Sigma } H | \nabla u |  -  \int_{\{  \nabla_{_{\Sigma }} u  = 0 , \ \p_\mu u \ne 0 \} } \frac{ \p_\mu u  }{|\nabla u |}  \Delta_{_{\Sigma} } u \\
 & \ +  \int_{ \{  \nabla_{_{\Sigma } } u \ne 0 \} } \left \la \nabla_{_{\Sigma}} \p_\mu u, \frac{ \nabla_{_{\Sigma} } u  }{ | \nabla u |}  \right \ra  
  - \frac{ \p_\mu u }{ | \nabla u | }    \nabla^2_{_{ \Sigma } } u  \left ( \frac{ \nabla_{_{ \Sigma} } u }{ | \nabla_{_{\Sigma}} u | }   , \frac{ \nabla_{_{ \Sigma} } u }{ | \nabla_{_{\Sigma}} u | } \right ) .
\end{split}
\ee
Now recall the mass formula derived in (6.27) of \cite{BKKS}, which asserts 
\be \label{eq-mass-BKKS}
\lim_{ L \to \infty} \left[  \int_{S} \p_\mu | \nabla u | +   \int_{-L }^{L}   \left( 2 \pi -  \int_{\Sigma_t \cap S } \kappa  \right) \, d t \right] = 8 \pi \m .
\ee
Leting $ L \to \infty$,  \eqref{eq-mass-bdry} follows from \eqref{eq-u-u-ep-2} and \eqref{eq-mass-BKKS}.
\end{proof}

\section{Mass of manifolds with a fill-in of the boundary} \label{sec-mass-fill-in}

In this section, we use $(E, g)$, $(\Omega, g)$ to  denote an asymptotically flat $3$-manifold with boundary $ \Sigma$, 
 a compact Riemannian $3$-manifold with boundary $\p \Omega$,  respectively. 
 By abuse of notation,  metrics on $E$ and $\Omega$ are both denoted by $g$.
We say $ (\Omega, g)$ is a fill-in of $\Sigma$ if $ \p \Omega $ is isometric to $\Sigma$,
both of which are equipped with the induced metric. 
By a recent result of Shi, Wang and Wei \cite{SWW}, which answered a question of Gromov \cite{Gromov},
a fill-in of $\Sigma$ with nonnegative scalar curvature always exists.

Suppose  $(\Omega, g)$ is a fill-in of $(E, g)$. Let $U_+$ and $U_-$ 
be the Gaussian tubular neighborhoods  of $\Sigma $ in $E$ and $\Omega$, respectively, 
so that $ U_+$ is diffeomorphic to  $ \Sigma \times [0, \epsilon)$ and $U_-$ is diffeomorphic to 
$\Sigma \times (-\epsilon , 0]$. Identifying  $U_+ \cup U_- $ with $\Sigma \times (-\epsilon, \epsilon )$ by lining up geodesics 
that are normal to $\Sigma$, 
we obtain a manifold $(M, g)$ on which $g$ is a continuous metric, which is smooth up to $\Sigma$ from both sides.
In particular, $g$ is Lipschitz across $\Sigma$.  As a result, by \eqref{eq-g-AF}, 
$g$ is a $W^{1, p}_{loc} $ metric on $M$ satisfying 
$
 g_{ij} - \delta_{ij} \in W^{1, p}_{ - \tau} ( \R^3 \setminus B_r ) 
$
for $ p > 3$ at the end of $E$.
By Theorem 3.1 in \cite{Bartnik}, there exists a function $u \in W^{2,p}_{loc}(M)$ such that $u-x_1\in W^{2,p}_{1-\tau}( \R^3 \setminus B_r)$ and 
$ \Delta u = 0 $. 

We want to apply Proposition \ref{prop-bdry-1} and Propositon \ref{prop-mass-bdry} to analyze the mass of $(M, g)$
with this choice of $u$.

\subsection{Regularity of harmonic functions on the corner surface $\Sigma$} \label{Existence}

By Sobolev embedding, $u\in W^{2,p}_{loc} (M) \subset C^{1,\alpha}_{loc}(M)$, and hence $u |_{\Sigma} \in C^{1,\alpha}(\S)$. Standard elliptic theory shows $u$ is smooth away from  $\S$. In what follows, we examine the regularity of $u $ on $ \Sigma$. 

\begin{proposition} \label{prop-u-corner}
If $g$ is $C^2$ up to $\Sigma$ on its two sides in $M$, then $u |_\Sigma$, the restriction of $u$ to $ \Sigma$, is in $C^{2, \alpha} (\Sigma) $, and
$u $ is $C^{2,\alpha}$ up to $\Sigma$ on its two sides. Similarly, if $g$ is $C^\infty$ up to $\Sigma$, then $u |_{\Sigma}$ is in $C^\infty (\Sigma) $ and $u$ is $C^\infty $ up to $\Sigma$ on its two sides.
\end{proposition}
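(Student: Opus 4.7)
My plan is to bootstrap, exploiting that in Fermi coordinates adapted to $\Sigma$ only the normal derivative of $g$ can jump; tangential derivatives of $g$ are automatically continuous across $\Sigma$. Near any $p \in \Sigma$, I would choose coordinates $(x', x^n)$ with $\Sigma = \{x^n = 0\}$ and $g = (dx^n)^2 + h(x', x^n)$. Since the induced metric on $\Sigma$ agrees from both sides, each tangential derivative $\partial_{x^a} g$ (for $a < n$) has equal traces from the two sides at $\Sigma$. Rewriting the equation in divergence form, $\partial_i(A^{ij}\partial_j u) = 0$ with $A^{ij} = \sqrt{|g|}\, g^{ij}$, this means $A^{ij}$ is Lipschitz on $M$ with $\partial_n A^{ij}$ possibly discontinuous across $\Sigma$, while each $\partial_a A^{ij}$ is itself Lipschitz on $M$ with bounded derivatives in every direction.

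The key step will be to upgrade every tangential first derivative $v = \partial_a u$ from $W^{1,p}$ to $W^{2,p}_{loc}$. Starting from $u \in W^{2,p}_{loc} \subset C^{1,\alpha}_{loc}$ and distributionally differentiating the weak equation tangentially yields
\[ \partial_i(A^{ij} \partial_j v) = -\partial_i\!\left(\partial_a A^{ij}\, \partial_j u\right), \]
whose right-hand side expands to $(\partial_i \partial_a A^{ij})\partial_j u + (\partial_a A^{ij})\partial_i \partial_j u$. The crucial point is that the continuity of $\partial_a A^{ij}$ across $\Sigma$ forces $\partial_i \partial_a A^{ij}$ to be in $L^\infty$, so the whole right-hand side sits in $L^p_{loc}$. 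Applying the $L^p$-theory for divergence-form elliptic equations with Lipschitz principal part (equivalently, Calder\'on--Zygmund estimates in the non-divergence form $A^{ij} \partial_i \partial_j v + (\partial_i A^{ij})\partial_j v = F$) then gives $v \in W^{2,p}_{loc}$, hence $\partial_a u \in C^{1,\alpha}_{loc}(M)$. Restricting to $\Sigma$, this yields $\partial_a (u|_\Sigma) \in C^{1,\alpha}(\Sigma)$ for every tangential $a$, i.e., $u|_\Sigma \in C^{2,\alpha}(\Sigma)$.

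With $u|_\Sigma \in C^{2,\alpha}$ in hand, I would then apply classical Schauder theory to the Dirichlet problems on each of $M_\pm$ --- whose coefficients are $C^2$ up to $\Sigma$ and whose boundary data is now $C^{2,\alpha}$ --- to conclude $u \in C^{2,\alpha}$ up to $\Sigma$ from each side. For the $C^\infty$ case I would iterate: at each stage the argument produces one further tangential derivative of $u|_\Sigma$ by studying $\partial_a^k u$ via the PDE obtained from $k$-fold tangential differentiation. At every level the new equation's coefficients involve only tangential derivatives of $g$, which remain continuous across $\Sigma$ by the same trace argument, so the bootstrap step applies uniformly, giving $u|_\Sigma \in C^\infty$ and $u \in C^\infty$ up to $\Sigma$ from either side.

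The main obstacle is the rigorous execution of the first bootstrap step: a priori $v = \partial_a u$ lies only in $W^{1,p}$, and neither $u$ nor $A^{ij}$ is $C^2$ across $\Sigma$, so one cannot blindly apply standard interior estimates. The argument only succeeds because differentiating the coefficients tangentially never exposes the jump in $\partial_n A^{ij}$ --- everything entering the PDE for $v$ is a tangential quantity with controlled behavior across $\Sigma$. If desired, this step may be implemented cleanly through tangential difference quotients of $u$, thereby avoiding any premature invocation of derivatives of $u$ not yet known to exist.
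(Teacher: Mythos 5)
Your proposal is correct and follows essentially the same route as the paper: Fermi coordinates adapted to $\Sigma$, the observation that tangential difference quotients of the metric coefficients remain bounded across $\Sigma$ while only normal derivatives may jump, interior $W^{2,p}$ estimates applied to the difference-quotiented equation to upgrade $\partial_a u$ to $W^{2,p}_{loc}$ (the paper works in non-divergence form and invokes GT Theorem 9.11; you phrase it in divergence form, but the two are equivalent here since $A^{ij}$ is Lipschitz), iteration for higher-order tangential regularity, and one-sided Schauder theory for regularity up to $\Sigma$ from each side. Your concluding caveat about implementing the first bootstrap step via tangential difference quotients --- rather than formally differentiating the PDE by $\partial_a$ before $\partial_a u$ is known to have the requisite regularity --- is exactly how the paper carries out this step.
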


\begin{proof}
Consider a point $p\in \S$.  
Using Fermi coordinates, we identify a neighborhood $V$ of $p$ in $M$ 
with $  \S \times (-\eps,\eps)$, where coordinates on $\Sigma$ are $(x_1, x_2)$ and 
a coordinate in $(-\epsilon, \epsilon)$ is  $x_3 = t $.
Given a small constant $h$, consider the difference quotient of $u$ along the direction of $e_\alpha = \p_\alpha$, that is 
\be
\delta^h_\alpha u (x) = \frac{1}{h} [ u (x + h e_\alpha) - u (x) ], \ \alpha \in \{ 1, 2 \}. 
\ee
It follows from $ \Delta u = 0$ that
\be \label{eq-Delta-h-u}
\begin{split}
\Delta \,  \delta^h_\alpha u  (x) = & \ \frac{1}{h} \left[ g^{ij}(x)  \p^2_{ij} u ( x + h e_\alpha) - g^{ij}(x)  \Gamma^k_{ij} (x) \p_k u (x+h e_\alpha ) \right] \\
= & \ \frac{1}{h} \left[ [ g^{ij}(x) - g^{ij} (x+ h e_\alpha) ] \p^2_{ij} u ( x + h e_\alpha) \right. \\
& \ \left. - [ g^{ij}(x)  \Gamma^k_{ij} (x)  - g^{ij}(x + h e_\alpha )  \Gamma^k_{ij} ( x + h e_\alpha )  ] \p_k u (x+h e_\alpha ) \right] .
\end{split}
\ee
Since $ g_{ij} \in C^{0,1}_{loc} (V)$, 
\be
\frac{g^{ij}(x) - g^{ij} (x+ h e_\alpha)}{h},  \,  \Gamma^k_{ij} (x),  \, \Gamma^k_{ij} (x + h e_\alpha) \,  \in \,  L^\infty_{loc} (V). 
\ee
Since $g$ is $C^2$ up to $\Sigma$ from both sides of $\Sigma$, $g_{ij}$ and $ \p g_{ij}$ are locally Lipschitz 
along the $\p_\alpha$-direction in $V$,  therefore
\be
\frac{1}{h} \left[ \Gamma^{k}_{ij} (x) - \Gamma^{k}_{ij} ( x + h e_\alpha) \right] \, \in \, L^\infty_{loc} (V) . 
\ee
These show that the right side of \eqref{eq-Delta-h-u} is in $L_{loc}^p (V)$. 
By Theorem 9.11 in \cite{GT},  $|| \delta^h_\alpha u ||_{W^{2,p}(\tilde V)}$ is bounded by a constant independent on $h$,  
on any subdomain $ \tilde V \subset \subset V$. This readily implies $ \p_\alpha u = \lim_{h \to 0} \delta^h_\alpha u \in W^{2,p}_{loc} (V)$.
By Sobolev embedding, $ \p_\alpha u \in C^{1 , \alpha}_{loc} (V) $. In particular, this shows $u |_\Sigma  \in C^{2,\alpha} (\Sigma) $.

Next, we show that this argument can be iterated  to show $ u |_{\Sigma}$ is smooth 
in the case that $g$ is smooth up to $\S$ from both sides. By integrating again a test function, 
\eqref{eq-Delta-h-u} implies
\be
\Delta \, \p_\alpha u = - \p_\alpha g^{ij} \, \p^2_{ij} u 
+ \left( \p_\alpha g^{ij} \, \Gamma^k_{ij} + g^{ij} \p_\alpha \Gamma^k_{ij} \right) \p_k u .
\ee
Therefore,
\be \label{eq-Delta-h-u-1}
\begin{split}
\Delta \, \delta^h_\beta \p_\alpha u = & \  - \delta^h_\beta \p_\alpha g^{ij} \, \p^2_{ij} u - \tau^h_\alpha \p_\alpha g^{ij} \, \delta^h_\beta \p^2_{ij} u \\
& \ + \delta^h_\beta \left( \p_\alpha g^{ij} \, \Gamma^k_{ij} + g^{ij} \p_\alpha \Gamma^k_{ij} \right) \p_k u \\
& \  +  \tau^h_\beta \left( \p_\alpha g^{ij} \, \Gamma^k_{ij} + g^{ij} \p_\alpha \Gamma^k_{ij} \right) \, \delta^h_\beta \p_k u .
\end{split}
\ee
Here $ \tau^h_\beta $ is the operator of shifting the coordinate along $ e_\beta$ by $h$, i.e. $ \tau^h_\beta f (x) = f ( x + h e_\beta) $. 
Recall that, on any subdomain $ \tilde V \subset \subset V$, the preceding step has shown 
$ || \delta^h_\beta u ||_{W^{2,p} ( \tilde V) } \le C $, where $C$ is independent on $h$. This, combined with the fact $  \delta^h_\beta \p^2_{ij} u  = \p^2_{ij}  \delta^h_\beta  u  $  and the assumption that $g$ is smooth up to $\Sigma$ on both sides, then shows  the right side of \eqref{eq-Delta-h-u-1} is uniformly bounded in $L^p ({ \tilde V})$. Hence, by Theorem 9.11 in \cite{GT}, 
$ || \delta^h_\beta \p_\alpha u ||_{W^{2,p}  (\tilde V)}$ is uniformly bounded and consequently $ \p^2_{\beta \alpha} u \in W^{2,p}_{loc} (V)$.
By Sobolev embedding, $ \p^2_{\beta \alpha} u \in C^{1,\alpha}_{loc} (V)$, and this implies $ u |_{\Sigma} \in C^{3, \alpha} (\Sigma)$.
Repeating these arguments, we conclude $ u |_\Sigma \in C^\infty (\Sigma)$.

The remaining claim that $u$ is $C^{2,\alpha}$ ($C^\infty$) up to $\Sigma$ on the  two sides of $\Sigma$ follows directly from 
the standard  elliptic boundary regularity theory, see Theorem 6.19 in \cite{GT}.
\end{proof}

\begin{remark}
Proposition \ref{prop-u-corner} is a local result that holds for any $W^{2,p}$ harmonic function  in a neighborhood of the corner hypersurface $\Sigma$.
It continues to hold in higher dimensions as the proof does not require $\Sigma$ to be $2$-dimensional. 
{The  proof technique also works for more general linear, elliptic PDEs on manifolds with corner type singularity.}
\end{remark}

\begin{remark} \label{rem-C-2-u}
Let $H$ and $H_\Omega$ be the mean curvature of $\Sigma$ in $(E, g)$ and $(\Omega, g)$, respectively. 
Evaluating $\Delta  u = 0 $ along $\Sigma$ in its both sides  in $(M,g)$, one has
$$ 0 = \Delta_{_\Sigma} u + H \frac{\p u}{\p t} + \frac{\p^2 u}{\p t^2}  
\ \ \text{and} \ \
0= \Delta_{_\Sigma} u + H_{_\Omega} \frac{\p u}{\p t} + \frac{\p^2 u}{\p t^2},
$$
where $\Delta_{_\Sigma} u $ is the Laplacian of $ u |_{_\Sigma} $ on $\Sigma$ and $\p_t = \mu$ is the $\infty$-pointing unit normal
in $(M,g)$. This implies, if $H = H_{_\Omega}$, $u$ is $C^2$ across $\Sigma$ in the given coordinates $\{ x_\alpha, t \}$.
\end{remark}

\subsection{Mass of manifolds with corner surface} \label{P1}

We analyze the mass of $(M, g)$ following the method of Bray-Kazaras-Khrui-Stern \cite{BKKS}. For this purpose,
we impose the following topological assumption on $M = E \cup \Omega$:

\vspace{.1cm}

{\bf Condition (T)}: Assume $M$ contains no non-separating two-spheres.

\vspace{.1cm}

As $ u  \in W^{2,p}_{loc} (M) \subset C^1 (M)$, $ \Sigma_t = u^{-1} (t)$ is $C^1$ surface  for each regular value $t$.
Moreover, $u$ satisfies the maximum principle (see Theorem 8.1 in \cite{GT} for instance).
Thus, it follows from condition (T) and the maximum principle that
$\Sigma_t$ consists of a single non-compact connected component, together with 
some compact components (possibly empty) that are closed surfaces with positive genus.
Therefore, $ \chi (\Sigma_t) \le 1 $ (cf. Section 6 in \cite{BKKS}).

By Proposition \ref{prop-u-corner}, we can apply Proposition \ref{prop-mass-bdry} and Proposition  \ref{prop-bdry-1}  
to  $ (E, g)$ and $(\Omega, g)$, respectively, to obtain
\be \label{eq-mass-bdry-E}
\begin{split}
& \ 8 \pi \m +  2 \pi  \int_{-T}^T  ( \chi(\Sigma_{t, E}) - 1 )  \, dt \\
\ge & \ \int_{E}  \frac{1}{2} \left[ \frac{ | \nabla^2 u |^2 }{  | \nabla u |  }  +  R | \nabla u |   \right] 
 - \int_{\Sigma} H_{_E} | \nabla u |  \\
 & \ +  \int_{ \{  \nabla_{_{\Sigma} } u \ne 0 \} } \la \nabla_{_{\Sigma}} \p_\mu u, \frac{ \nabla_{_{\Sigma} } u  }{ | \nabla u |}  \ra  
  - \frac{ \p_\mu u }{ | \nabla u | }    \nabla^2_{_{ \Sigma } } u  ( \frac{ \nabla_{_{ \Sigma} } u  }{ | \nabla_{_{\Sigma}} u | }   , \frac{ \nabla_{_{\Sigma} } u  }{ | \nabla_{_{\Sigma}} u | }  ) 
\end{split}
\ee
and
\be \label{eq-prop-bdry-omega}
\begin{split}
& \ 2 \pi \int_{\min_{_\Omega} u }^{ \max_{_\Omega} u  } \chi(\Sigma_{t, \Omega})  \, dt  \\
 & \ +  \int_{ \{  \nabla_{_{\Sigma} } u \ne 0 \} } \la \nabla_{_{\Sigma}} \p_\mu u, \frac{ \nabla_{_{\Sigma} } u  }{ | \nabla u |}  \ra  
  - \frac{ \p_\mu u }{ | \nabla u | }    \nabla^2_{_{ \Sigma} } u ( \frac{ \nabla_{_{ \Sigma} } u }{ | \nabla_{_{\Sigma}} u | }   , \frac{ \nabla_{_{ \Sigma} } u }{ | \nabla_{_{\Sigma}} u | }  ) \\
\ge & \ \int_{\Omega}  \frac{1}{2} \left[ \frac{ | \nabla^2 u |^2 }{  | \nabla u |  }  +  R | \nabla u |   \right] 
+ \int_{\Sigma} H_{_{\Omega}}  | \nabla u |  .
\end{split}
\ee
Here $t$ is chosen to be a regular value of $ u $ and $u|_{_\Sigma} $, $ \Sigma_{t, E} = \Sigma_t \cap E $,
$ \Sigma_{t, \Omega} = \Sigma_t \cap \Omega$. 
Since $ u $ is $C^1$ on $M$ and $ C^2$ on $ \Sigma$, 
adding \eqref{eq-mass-bdry-E} and \eqref{eq-prop-bdry-omega} gives
\be \label{eq-mass-M-1}
\begin{split}
& \ 8 \pi \m +  2 \pi   \int_{- T}^T  \left[ \chi(\Sigma_{t, E})  + \chi(\Sigma_{t, \Omega} ) -1 \right]  \, dt \\
\ge & \ \int_{E}  \frac{1}{2} \left[ \frac{ | \nabla^2 u |^2 }{  | \nabla u |  }  +  R | \nabla u |   \right] 
 + \int_{\Omega}  \frac{1}{2} \left[ \frac{ | \nabla^2 u |^2 }{  | \nabla u |  }  +  R | \nabla u |   \right] 
 + \int_{\Sigma} ( H_{_\Omega} - H_{_E} ) | \nabla u |  .
\end{split}
\ee
Note that, for each chosen $t$, $ \Sigma_{t, E} \cup \Sigma_{t, \Omega} = \Sigma_t $
 if $ \Sigma_t \cap \Omega \ne \emptyset$; and 
  $ \Sigma_{t, E} = \Sigma_t$ if $ \Sigma_t \cap \Omega = \emptyset $. In either case, 
 $ \chi (\Sigma_{t, E} ) + \chi (\Sigma_{t, \Omega} ) = \chi (\Sigma_t) \le 1 $.
  Therefore, it follows from \eqref{eq-mass-M-1} that
\be \label{eq-mass-M-2}
8 \pi \m \ge  \int_{E}  \frac{1}{2} \left[ \frac{ | \nabla^2 u |^2 }{  | \nabla u |  }  +  R | \nabla u |   \right] 
 + \int_{\Omega}  \frac{1}{2} \left[ \frac{ | \nabla^2 u |^2 }{  | \nabla u |  }  +  R | \nabla u |   \right] 
 + \int_{\Sigma} ( H_{_\Omega} - H_{_E} ) | \nabla u |  ,
\ee
and equality holds only if $ \chi(\Sigma_t ) = 1 $ for a.e. $t$.
This proves Theorem \ref{main-1-intro}.

\begin{remark} \label{rem-mass-equation}
If the gradient $ | \nabla u |$ never vanishes on $M$, it is clear from its proof that equality holds in  \eqref{eq-mass-M-2}.
\end{remark}

\begin{remark}
The proof allows $E$ and $\Omega$ to have additional boundary components if those components have zero mean curvature. In this case, one can impose a Neumann boundary condition at the minimal boundary portion (see \cite{BKKS}) or impose a suitable Dirichlet boundary condition (see \cite{HKK}).
\end{remark}

\subsection{Positivity of the mass}
We give a proof of Corollary \ref{cor-WY} using \eqref{eq-main-intro}.
We keep the same notations as in the previous subsection, that is  $E$ denotes the asymptotically flat piece, 
and metrics on both  $E$ and $\Omega$ are denoted by $g$. 

Suppose there are vector fields $X$, $Y$ on $\Omega$, $E$, respectively, satisfying 
\be \label{eq-R-Ci-2-3}
R \ge \frac{2}{3} | X|^2 - 2 \, \div X \ \  \text{and}  \ \ R \ge \frac{2}{3} |Y|^2 - 2\, \div Y,
\ee
where $Y=O(|x|^{-1-2\tau})$ for some $\tau>\frac{1}{2}$. Here we consider the case $C_1 = C_2 = \frac{2}{3}$.
\vh

Let $(M, g)$, $ u$  be the glued manifold, the harmonic function as in the preceding proof of Theorem \ref{main-1-intro}.
On $(\Omega, g)$, $ | \nabla u |$ is Lipschitz, integrating by parts gives 
\be\label{EQ}
\begin{split}
& \  \int_{\Omega} \frac12  \left( \frac{1}{| \nabla u |} | \nabla^2 u |^2 + R | \nabla u | \right)  \\
\ge & \ \int_{\Omega} \frac{1}{ 2 | \nabla u |} | \nabla^2 u |^2 + \frac{1}{3}| X|^2 | \nabla u | + \la X, \nabla | \nabla u | \ra 
 - \int_{\S}  \la X , \nu  \ra  | \nabla u |.
\end{split}
\ee

To estimate the integral on $\Omega$, we use a refined Kato inequality
for harmonic functions (c.f. Lemma 3.1 in \cite{HKK} for instance).

\begin{lemma} \label{lem-K-ineq}
If $u$ is a harmonic function on a Riemannian manifold $(U^n , g)$, then
\begin{equation}
|\nabla^2u|^2\geq \frac{n}{n-1} |\nabla|\nabla u||^2 \ \ \mathrm{on} \ U \setminus D .
\end{equation}
Here $ D \subset \{ \nabla u = 0 \}$ is a set of measure zero at which $ | \nabla u |$ is not differentiable. 
If equality holds at some $ p \in U \setminus D$, then 
\begin{itemize}
\item[i)] $ \nabla^2 u (p) = 0 $ if $ \nabla u (p) = 0 $; and
\item[ii)] $ \displaystyle   \nabla^2 u (p) = \frac{n}{n-1}\nabla^2 u(\nu, \nu) (p) 
\lf( \nu \otimes \nu - \frac{1}{n} g \ri) (p)  $, if $ \nabla u (p) \ne 0 $. Here $ \displaystyle \nu = \frac{ \nabla u (p) }{ | \nabla u (p) |}$ and  $ \nu \otimes \nu $ is understood by 
$ \nu \otimes \nu (v , w) = \la \nu, v \ra \la \nu, w \ra $ for any $ v, w \in T_p U$.
\end{itemize}
\end{lemma}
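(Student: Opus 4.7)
The plan is to establish the inequality pointwise via a carefully chosen orthonormal frame. Away from the zero set of $\nabla u$, the function $|\nabla u|$ is smooth, while globally it is Lipschitz (since $|\cdot|$ is $1$-Lipschitz and $\nabla u$ is smooth); by Rademacher's theorem the set $D$ where $|\nabla u|$ fails to be differentiable has measure zero and is contained in $\{\nabla u = 0\}$. The main content is a pointwise algebraic inequality on the Hessian, which I would carry out as follows.

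Fix $p \in U \setminus D$ with $\nabla u(p)\neq 0$, set $\nu = \nabla u/|\nabla u|$, and choose an orthonormal basis $\{e_1,\dots,e_n\}$ of $T_pU$ with $e_1=\nu$. Write $H_{ij} = \nabla^2 u(e_i,e_j)$. Differentiating $|\nabla u|^2 = \langle \nabla u, \nabla u\rangle$ gives $\nabla_{e_i}|\nabla u| = H_{1i}$, so
\[
|\nabla|\nabla u||^2 = \sum_i H_{1i}^2, \qquad |\nabla^2 u|^2 = H_{11}^2 + 2\sum_{j\geq 2} H_{1j}^2 + \sum_{i,j\geq 2}H_{ij}^2.
\]
The desired estimate $(n-1)|\nabla^2 u|^2 - n|\nabla|\nabla u||^2 \ge 0$ then rearranges to
\[
(n-2)\sum_{j\geq 2} H_{1j}^2 \;+\; \Bigl[(n-1)\sum_{i,j\geq 2}H_{ij}^2 - H_{11}^2\Bigr] \;\ge\; 0.
\]
The first summand is manifestly nonnegative. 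For the bracket, harmonicity yields $H_{11} = -\sum_{j\geq 2} H_{jj}$, and two applications of Cauchy--Schwarz give
\[
H_{11}^2 \;\le\; (n-1)\sum_{j\geq 2} H_{jj}^2 \;\le\; (n-1)\sum_{i,j\geq 2} H_{ij}^2,
\]
which closes the inequality.

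For the equality discussion, saturation forces both summands above to vanish. When $n\ge 3$, the first gives $H_{1j}=0$ for $j\ge 2$ (for $n=2$ the first term is absent). Equality in the Cauchy--Schwarz chain forces the off-diagonal entries $H_{ij}$ with $i\ne j$, $i,j\ge 2$ to vanish and $H_{22}=\cdots=H_{nn}$; combined with the harmonic trace condition this gives $H_{jj} = -H_{11}/(n-1)$ for $j\ge 2$. Reassembling in the chosen basis and using $\sum_{j\ge 2} e_j\otimes e_j = g - \nu\otimes \nu$ yields
\[
\nabla^2 u(p) = \frac{n H_{11}}{n-1}\left(\nu\otimes\nu - \frac{1}{n}g\right)(p),
\]
which is the stated form with $H_{11}=\nabla^2 u(\nu,\nu)(p)$. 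Finally, for $\nabla u(p)=0$ at a point of $U\setminus D$, differentiability of $|\nabla u|$ together with $|\nabla u|\ge 0 = |\nabla u|(p)$ forces $\nabla|\nabla u|(p)=0$, so equality collapses to $|\nabla^2 u|(p)=0$, giving $\nabla^2 u(p)=0$.

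No single step is a genuine obstacle; the whole argument is standard linear algebra once the adapted frame is chosen. The one point requiring care is the equality analysis, since one must track both the $(n-2)\sum H_{1j}^2$ term (which behaves differently for $n=2$) and the two successive Cauchy--Schwarz equalities to identify the full structure of $\nabla^2 u$.
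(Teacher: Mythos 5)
Your proof is correct and reaches the same conclusion as the paper by a genuinely different route. The paper works tensorially: it introduces the auxiliary trace-free tensor
\[
W_{ij} = \tfrac12(u_{;j}Z_i + u_{;i}Z_j) - \tfrac{1}{n}\nabla^2 u(\nabla u,\nabla u)\,g_{ij},
\qquad Z = \tfrac12\nabla|\nabla u|^2,
\]
observes $|Z|^2 = \langle W,\nabla^2 u\rangle$ (using $\Delta u = 0$ to kill the trace term), bounds $|W|^2 \le \tfrac{n-1}{n}|Z|^2|\nabla u|^2$ by Cauchy--Schwarz on $\langle Z,\nabla u\rangle$, and then applies Cauchy--Schwarz a second time to $\langle W,\nabla^2 u\rangle$. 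You instead fix an adapted orthonormal frame with $e_1 = \nu$, write everything in Hessian components $H_{ij}$, and reduce the claim to the elementary chain $H_{11}^2 \le (n-1)\sum_{j\ge 2}H_{jj}^2 \le (n-1)\sum_{i,j\ge 2}H_{ij}^2$ via the trace condition; the extra term $(n-2)\sum_{j\ge 2}H_{1j}^2 \ge 0$ absorbs the mixed entries. The frame computation is more elementary and arguably makes the equality analysis more transparent, since the rigidity is read off entry by entry; the paper's version is basis-free. Both arguments share the same degenerate behavior when $n=2$ (in your decomposition the $(n-2)$ prefactor vanishes, in the paper's the coefficient $\tfrac12 - \tfrac1n$ vanishes, so neither forces the mixed component of the Hessian to vanish), but this is immaterial since the paper applies the lemma only with $n=3$. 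Your handling of case i) via minimality of $|\nabla u|$ at a zero of $\nabla u$ matches the paper's reasoning.
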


\begin{proof}
Let $u_{;i} =\partial_i u$ and $ Z =\frac{1}{2} \nabla  |\nabla u|^2$, then
\be \label{eq-Z-norm}
\begin{split}
| Z |^2 = & \ \frac12 ( u_{;j} Z_i + u_{;i} Z_j )  (\nabla^2 u)^{ij} \\
= & \ \left[ \frac12 ( u_{;j} Z_i + u_{;i} Z_j ) - \frac{1}{n} \nabla^2 u (\nabla u , \nabla u) g_{ij}  \right]  (\nabla^2 u)^{ij},
\end{split}
\ee
where $ \Delta u = 0 $ was used in the second step.
Define
\be
W_{ij} = \frac12 ( u_{;j} Z_i + u_{;i} Z_j ) - \frac{1}{n} \nabla^2 u (\nabla u , \nabla u) g_{ij} , 
\ee
then $ g^{ij} W_{ij} = 0 $ and 
\be \label{eq-W-norm}
\begin{split}
|W|^2 = & \ \frac{1}{2}|Z|^2 |\nabla u|^2 + \left( \frac12 - \frac1n \right) \langle Z,\nabla u\rangle^2 \\
\leq & \ \frac{n-1}{n}  |Z|^2 |\nabla u|^2.
\end{split}
\ee
By \eqref{eq-Z-norm} and \eqref{eq-W-norm},
\begin{align} \label{eq-ZW}
\begin{split}
|Z|^2 = & \ \la W, \nabla^2 u \ra 
\leq  \sqrt{\frac{ n -1 }{ n }} |Z| |\nabla u| |\nabla^2u|,
\end{split}
\end{align}
which gives
$  |Z|\leq\sqrt{\frac{n-1}{n}}|\nabla u||\nabla^2u| $.
This readily shows
\begin{equation} \label{eq-Kato-ineq}
|\nabla|\nabla u||^2\le \frac{n-1} {n} |\nabla^2u|^2 , \ \ \mathrm{on} \ U \setminus D. 
\end{equation}
 
Suppose equality in \eqref{eq-Kato-ineq} holds at some $ p \in U \setminus D$.
If $ \nabla u (p) = 0 $, then 
$ \nabla | \nabla u | (p) = 0 $ which shows $ \nabla^2 u (p) = 0 $. 

Next suppose $ \nabla u (p) \ne 0 $. 
From the proof of \eqref{eq-Kato-ineq}, we have 
\be \label{eq-kato-1}
\la Z,  \nabla u \ra^2 (p) = | Z (p) |^2 \, | \nabla u(p) |^2 \ \  \text{and} \ \
 \la W, \nabla^2 u \ra (p) = | W (p) | \, | \nabla^2 u (p) | .
\ee
If $ \nabla^2 u (p) = 0$,  ii) holds automatically. 
Suppose $ \nabla^2 u(p) \neq 0 $. In this case, 
$ Z(p) \ne 0$. For otherwise $ \nabla | \nabla u | (p) = 0 $ which would imply
$\nabla^2 u (p) = 0 $ by the equality in \eqref{eq-Kato-ineq}.
Thus, \eqref{eq-kato-1} is equivalent to
\be \label{eq-kato-2}
Z(p) = \phi (p) \, \nabla u(p) \ \ \text{and} \ \ W (p) = \psi (p) \nabla^2 u (p),
\ee 
where $ \phi (p) $ and $ \psi (p)$ are some numbers with $ \phi (p) \neq 0$ and $ \psi (p) \ge 0$.

By the definition of $ W$ and the fact $ g^{ij} W_{ij} = 0 $, \eqref{eq-kato-2} gives
$ \phi (p) = \nabla^2 u ( \nu, \nu ) (p) $
and
\be \label{eq-kato-3}
 \phi (p) \lf( d u \otimes d u - \frac{1}{n} | \nabla u |^2 g \ri) (p) = \psi(p) \nabla^2 u (p).
\ee
Evaluating \eqref{eq-kato-3} along $(\nu, \nu)$ shows
$ \psi (p) =  \frac{n-1}{n}  | \nabla u (p)|^2 $. Part ii) follows.
\end{proof}

Coming back to \eqref{EQ}, by  Lemma \ref{lem-K-ineq}, we have
\be\label{EQ-2}
\begin{split}
& \ \int_{\Omega} \frac{1}{ 2 | \nabla u |} | \nabla^2 u |^2 + \frac{1}{3}| X|^2 | \nabla u | + \la X, \nabla | \nabla u | \ra \\
\ge & \ \int_{\Omega} \lf( \frac{\sqrt{3}}{ 2 \sqrt{ | \nabla u | } } | \nabla | \nabla  u  | | - 
\frac{1}{\sqrt{ 3} } |X | \sqrt{ | \nabla u |} \ri)^2 + | \nabla | \nabla u | | \,  |X|  + \la X, \nabla | \nabla u | \ra \\
\ge & \ 0 . 
\end{split}
\ee
Similarly, on $ E$, 
\be\label{EQ-3}
\begin{split}
& \ \int_{E} \frac{1}{ 2 | \nabla u |} | \nabla^2 u |^2 + \frac{1}{3}| Y|^2 | \nabla u | + \la Y, \nabla | \nabla u | \ra \\
\ge & \ \int_{E} \lf( \frac{\sqrt{3}}{ 2 \sqrt{ | \nabla u | } } | \nabla | \nabla  u  | | - 
\frac{1}{\sqrt{ 3} } |Y | \sqrt{ | \nabla u |} \ri)^2 + | \nabla | \nabla u | | \,  |Y|  + \la Y, \nabla | \nabla u | \ra \\
\ge & \ 0 . 
\end{split}
\ee
Thus, by \eqref{eq-mass-M-2}, \eqref{EQ}, and assumption c), 
\be\label{EQ4}
8 \pi\m \geq  \ \int_{\S} \left( H_{_\Omega} -\la X, \nu  \ra -H + \la Y , \mu \ra  \right) | \nabla u | \ge 0 .
\ee

\subsection{The zero mass case}
Suppose  $\m=0$ in the above context.
 Tracing all inequalities, we have
\be \label{eq-Euler-rigidity}
\chi (\Sigma_t) = 1, \ a.e. \ t  \in \R,
\ee
\be \label{eq-R-X-1}
\lf( R - \frac23  |X|^2 + 2 \div X \ri) | \nabla u | = 0 , \ \text{on} \ \Omega,
\ee
\be \label{eq-Kato-rigidity}
\frac{1}{ | \nabla u |} \lf( |\nabla^2u|^2 -  \frac{3}{2} |\nabla|\nabla u||^2 \ri) = 0 , \ \ a.e. \ \text{on} \ \Omega ,
\ee
\be \label{eq-nu-X-1}
\frac{\sqrt{3}}{ 2 \sqrt{ | \nabla u | } } | \nabla | \nabla  u  | | =
\frac{1}{\sqrt{ 3} } |X | \sqrt{ | \nabla u |}, \ a.e. \ \text{on} \ \Omega
\ee
and
\be \label{eq-nablau-X}
 | \nabla | \nabla u | | \,  |X|  + \la X, \nabla | \nabla u | \ra =0, \ a.e. \ \text{on} \ \Omega.
\ee
Analogous relations hold for $ R$, $u$ and $Y$ on $E$.

To show the rigidity case in Corollary \ref{cor-WY}, we assume 
$$
 R \ge C_1 |X|^2 - 2 \div X \  \text{on} \ \Omega
$$
for some constant $C_1 > \frac23$.  
Under this assumption, \eqref{eq-R-X-1} implies $|X| | \nabla u | = 0$ on $\Omega$. 
By \eqref{eq-nu-X-1}, $ \nabla | \nabla u | = 0 $
 a. e. on $\Omega$, and  hence $|\nabla u |$ is a constant on $\Omega$. 
Similarly, if 
$$ R \ge C_2  |Y|^2 - 2 \div Y \   \text{on} \ E$$ 
for some $C_2 > \frac23$, then $ | \nabla u |$ 
is a constant on $ E$.
Consequently, $|\nabla u | $ equals the same constant on $E$ and $\Omega$ since $| \nabla u |$ is $C^0$ on $M$.
As $ | \nabla u | \to 1$ near infinity, we have $ | \nabla u |= 1 $ on $M$. As a result, $ X =0 $, $Y=0$,
  $ R \ge 0 $ and  $H_{_\Omega} \ge H$. 
Theorem \ref{main-1-intro} and $\m = 0 $ then imply $ \nabla^2 u = 0 $, $ R=0$  and 
$H_{_\Omega} = H$. 

Let $ u_i$ denote the harmonic function $u$ with $x_1$ replaced by $ x_i$, $i=1,2,3$. 
The preceding proof shows that  $\{ \nabla u_1, \nabla u_2, \nabla u_3\}$, being linearly independent near infinity, are parallel on $(E, g)$ and $(\Omega, g)$.
Hence, $(E, g)$ and $(\Omega, g)$ are flat. 

We claim that $\Sigma$ has the same second fundamental form in $(E, g)$ and $(\Omega, g)$.
To see this, on each side of $\Sigma$ in $(M, g)$,  we have
\be \label{eq-same-Hessian}
0 =  \nabla^2 u_i (\p_\alpha , \p_\beta) = \nabla^2_{_\Sigma} u_i (\p_\alpha , \p_\beta) + \Pi (\p_\alpha , \p_\beta) \frac{\p u_i}{\p \mu},
\ee
where $\nabla^2_{_\Sigma}$ is the Hessian on $\Sigma$ and $\Pi$ is the corresponding second fundamental form of $\Sigma$. 
As $\{ \nabla u_1, \nabla u_2, \nabla u_3\}$ form a basis for $T_p M $ at each $ p \in \Sigma$, one of $u_i$ satisfies 
\be \label{eq-nonzero-mu-u}
 \frac{\p u_i}{\p \mu} (p) \neq 0 . 
\ee
It follows from \eqref{eq-same-Hessian} and \eqref{eq-nonzero-mu-u} that $\Pi$ are the same from both sides of $\Sigma$ in $(M, g)$.
This combined with the fact $g$ is flat near $\Sigma$ implies that  $g$ is $C^2$ across $\Sigma$ (see \cite{ST} for instance). 
On the other hand, $H_{_\Omega } =  H$ implies $u $ is $C^2$ across $\Sigma$ by Remark \ref{rem-C-2-u}. 
The smoothness of $u$ is further promoted  to be $C^{2,\alpha}$ via the regularity of $g$.
We  then proceed as  in \cite{BKKS} to conclude that the map 
$ F=(u_1, u_2, u_3)$ is a $C^{2, \alpha}$ isometry between $(M, g)$ and $\R^3$. 

\begin{remark}
In the setting of Theorem \ref{main-1-intro}, Corollary \ref{cor-WY} establishes the positive mass theorem with corner along $\Sigma$
(c.f. \cite{M1, ST, MS}). However, it does not seem to be a replacement of the later results due to the dimension restriction and the topological condition (T).
\end{remark}

The preceding proof does not make use of 
\eqref{eq-Euler-rigidity}, \eqref{eq-Kato-rigidity} and \eqref{eq-nablau-X}.
Exploring these relations, we can prove a rigidiity result 
for the case $ C_1 = C_2 = \frac23$, but under a stronger regularity assumption of $g$ across $\Sigma$.

\begin{theorem} \label{thm-rigidity-2-3}
Let $(M, g)$ be a smooth asymptotically flat $3$-manifold, without boundary,
which contains no non-separating $2$-spheres. 
Let $ \Omega \subset M$ be a bounded domain with smooth boundary $ \Sigma$.
Suppose there exist vector fields $X$, $Y$ on $ \bar \Omega$, $M \setminus \Omega$, respectively, such that 
\begin{itemize}
\item[a)]  $ R \ge \frac23 |X|^2-2\div X \ \text{on} \ \Omega $,
\item[b)] $  R \ge \frac23 |Y|^2-2\div Y \ \text{on} \  M \setminus \Omega 
$ with $Y=O(|x|^{-1-2\tau})$, and
\item[c)] $ \la Y,\mu  \ra \ge \la X,\nu \ra$, where $\nu$ is the $\infty$-pointing unit normal to $ \Sigma$.
\end{itemize}
Then $\m = 0 $ implies $(M , g)$ is isometric to $\R^3$.
\end{theorem}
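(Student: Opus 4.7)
The plan is to apply Theorem \ref{main-1-intro} to the decomposition $M=E\cup\Omega$ with $E:=M\setminus\Omega$, run the positivity argument of Corollary \ref{cor-WY} in the borderline case $C_1=C_2=\frac{2}{3}$, force every inequality to equality via $\m=0$, and then extract additional rigidity from the saturated refined Kato inequality in Lemma \ref{lem-K-ineq}(ii) to conclude $X\equiv Y\equiv 0$, from which flatness follows.

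Since $g$ is smooth across $\Sigma$, the mean curvatures $H_{_\Omega}$ and $H$ from the two sides agree, so the corner boundary term in \eqref{eq-main-intro} drops out. Applied to a harmonic function $u$ on $M$ asymptotic to a coordinate function, Theorem \ref{main-1-intro} gives
\be
8\pi\m \ge \int_E\tfrac12\Bigl(\tfrac{|\nabla^2u|^2}{|\nabla u|}+R|\nabla u|\Bigr)+\int_\Omega\tfrac12\Bigl(\tfrac{|\nabla^2u|^2}{|\nabla u|}+R|\nabla u|\Bigr).
\ee
Substituting (a) and (b), integrating $\div X$ and $\div Y$ by parts (the $\Sigma$ contribution is $\int_\Sigma(\la Y,\mu\ra-\la X,\nu\ra)|\nabla u|\ge 0$ by (c), and the infinity contribution vanishes by the decay of $Y$), and invoking Lemma \ref{lem-K-ineq} with $n=3$ reproduces the chain \eqref{EQ}--\eqref{EQ4}, so $\m\ge 0$. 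Forcing $\m=0$ upgrades every step to an equality: on $\Omega\cap\{\nabla u\ne 0\}$ Kato saturation holds, $R=\frac{2}{3}|X|^2-2\div X$, $X=-c\,\nabla|\nabla u|$ for some $c\ge 0$, and $|X||\nabla u|=\frac{3}{2}|\nabla|\nabla u||$, with analogous identities for $Y$ on $E$, together with $\la Y,\mu\ra=\la X,\nu\ra$ on $\Sigma\cap\{|\nabla u|>0\}$.

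By Lemma \ref{lem-K-ineq}(ii), Kato saturation gives $\nabla^2u=\frac{3}{2}\nabla^2u(\hat\nu,\hat\nu)\bigl(\hat\nu\otimes\hat\nu-\frac{1}{3}g\bigr)$ with $\hat\nu=\nabla u/|\nabla u|$; contracting with $\nabla u$ yields $\nabla|\nabla u|^2=2\nabla^2u(\hat\nu,\hat\nu)\nabla u$, so $\nabla|\nabla u|\parallel\nabla u$, and hence $X\parallel\nabla u$ on $\Omega\cap\{\nabla u\ne 0\}$. Running this with each of the three harmonic functions $u_1,u_2,u_3$ asymptotic to $x_1,x_2,x_3$ forces $X(p)$ to be parallel to every $\nabla u_i(p)$ at each regular point. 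The main obstacle is to promote this to $X\equiv 0$ without a strict inequality. Suppose $X(p)\ne 0$. By continuity there is an open $U\ni p$ on which $X$ and every $\nabla u_i$ are nonzero and mutually parallel, so one may write $u_i=F_i(\psi)$ for a common transverse potential $\psi$; harmonicity $F_i''(\psi)|\nabla\psi|^2+F_i'(\psi)\Delta\psi=0$ shows $F_i''/F_i'$ depends only on $\psi$ (independently of $i$), integrating gives $F_i'(s)=c_i\,G(s)$ for constants $c_i$, hence $\nabla(u_1-\tfrac{c_1}{c_2}u_2)\equiv 0$ on $U$. Unique continuation for harmonic functions extends this constancy to all of $M$, contradicting the linear independence of $x_1,x_2$ at infinity. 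Therefore $X\equiv 0$ on $\Omega$, and symmetrically $Y\equiv 0$ on $E$.

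With $X=Y=0$, the saturated identity forces $\nabla|\nabla u_i|\equiv 0$ on each piece, so $|\nabla u_i|$ is constant on $M$ by continuity across $\Sigma$ and equals $1$ by the asymptotics. The structural equation then reduces to $\nabla^2u_i\equiv 0$, giving three parallel vector fields $\nabla u_1,\nabla u_2,\nabla u_3$ linearly independent at infinity and therefore everywhere, so $(M,g)$ is flat and, being asymptotically flat with $\m=0$, isometric to $\R^3$.
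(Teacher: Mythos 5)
Your argument is correct, and it is genuinely different from the paper's. The paper fixes a single harmonic function $u$ and shows $\nabla^2 u=0$ by a detailed local geometric analysis of the regular level sets: from Kato saturation it derives that each level piece is umbilic with locally constant $|\nabla u|$, writes $g$ in the warped form $g=f^{-2}ds^2+f^{-1}\sigma$, uses the vector field equalities to show $f_{,ss}=2K$ with $K$ the (constant) Gauss curvature of $\sigma$, and then splits into compact (torus, $K=0$) versus non-compact components, ruling out $f_{,s}\neq 0$ by the asymptotics in either case. The topological hypothesis $\chi(\Sigma_t)=1$ enters in an essential way there to force the torus case. Your route instead applies the saturated equality chain to all three harmonic functions $u_1,u_2,u_3$ simultaneously: the relation $X=-\tfrac32\,\nabla|\nabla u_i|/|\nabla u_i|$ combined with $\nabla|\nabla u_i|\parallel\nabla u_i$ (from Kato equality) forces $X\parallel\nabla u_i$ on $\{\nabla u_i\ne 0\}$ for each $i$; if $X$ were nonzero somewhere, the three gradients would be mutually parallel on an open set, and the harmonicity/unique-continuation argument then contradicts the linear independence of the $x_i$ at infinity. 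This bypasses the level-set topology entirely, only using the topological hypothesis to invoke Theorem \ref{main-1-intro} in the first place, and it is shorter and more algebraic. The trade-off is that it requires the equality chain to hold for all three $u_i$ at once, whereas the paper's argument is intrinsic to a single level set foliation.

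Two small points to tighten. First, you claim ``there is an open $U\ni p$ on which $X$ and every $\nabla u_i$ are nonzero''; this is not quite right, since $\nabla u_i(p)$ could vanish even when $X(p)\ne 0$. The fix is immediate: each $\{\nabla u_i=0\}$ is closed with empty interior (non-constant harmonic functions obey unique continuation), so the intersection $W=\bigcap_i\{\nabla u_i\ne 0\}$ is open dense; if $\{X\ne 0\}\cap\Omega\ne\emptyset$ then it meets $W$, and you run the argument on a small open subset of $W\cap\{X\ne 0\}$ rather than around $p$ itself. Second, the ``common potential'' step is cleanest if you just take $\psi=u_1$: then $u_2=F_2(u_1)$ locally (shrinking $U$ so level sets of $u_1$ in $U$ are connected), harmonicity of $u_2$ gives $F_2''|\nabla u_1|^2=0$, hence $F_2$ is affine and $u_2-au_1$ is constant on $U$, then on $M$ by unique continuation, contradicting the asymptotics. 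Also, once $X\equiv Y\equiv 0$, it is slightly cleaner to argue that $\nabla|\nabla u_i|^2=2\nabla^2 u_i\cdot\nabla u_i$ vanishes on the dense set $\{\nabla u_i\ne 0\}$ (hence everywhere, as it is smooth), giving $|\nabla u_i|\equiv 1$; then $\nabla^2 u_i\equiv 0$ follows by plugging $|\nabla u_i|=1$ and $R\ge 0$ back into the equality in \eqref{eq-main-intro}, rather than from Kato saturation directly (which you only know a.e.).
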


\begin{proof} 
As we assume $g$ is smooth, the function $u$ is smooth. Hence each regular level set $\Sigma_t = u^{-1} (t)$ 
is a surface with a smooth metric induced from $g$.

Suppose $\m = 0 $. It suffices to show $ \nabla^2 u = 0 $ on $M$.
We first note
\be \label{eq-hu-gdu}
\nabla^2 u = 0 , \ a.e. \ \text{on} \ \{ x \in M \ | \ \nabla u(x) = 0 \} . 
\ee
This is because \eqref{eq-main-intro} implies 
$ 
\int_{M} \frac{ | \nabla^2 u |}{ | \nabla u |} < \infty $,
as $ R$ is integrable and $ | \nabla u | $ is bounded. 

Next, consider a set
$$A = \{ t \in \R \ | \ t \ \text{is a regular value of  both} \ u \ \text{and} \ u |_{\Sigma}, \ \text{and} \ \chi(\Sigma_t) = 1 \} .$$  
By \eqref{eq-Euler-rigidity}, the complement of $A$ in $ \R$ is a set of measure zero. 
The following is the main assertion in this proof:
\be \label{eq-uA-vH}
  \nabla^2 u (x) = 0, \ \forall \ x \in \Sigma_t \ \text{where} \ t \in A.  
\ee
If \eqref{eq-uA-vH} holds, then $ \nabla^2 u = 0 $ on $M$ follows. 
To see this, consider an open set
$$ B = \{ x \in M \ | \ \nabla^2 u (x) \ne 0 \} \cap \{ x \in M  \ | \ \nabla u (x) \ne 0 \} . $$
Let $ p \in B$ if $ B \ne \emptyset$. Consider a curve
$ \tau : (-\epsilon, \epsilon) \rightarrow B $,
for some small $ \epsilon > 0 $, with $ \tau (0) = p $ and $\tau'(0) = \nabla u (p)$.
As $\tau(s) \in B$ for $ s \in (-\epsilon, \epsilon)$, \eqref{eq-uA-vH} implies $ u ( \tau (s)  ) \notin A $. 
As a result, $\{ u (\tau (s) ) \ | \ s \in (-\epsilon, \epsilon) \}$ is a subset of measure zero in $ \R$. 
However, this set contains an open neighborhood of $ u(p)$ because
$  \frac{d}{d s} |_{s=0} u (\tau(s)) > 0 $. This is a contradiction. Therefore $B = \emptyset$.
Considering \eqref{eq-hu-gdu}, we then conclude $ \nabla^2 u = 0 $ on $M$.

The rest of the proof is to show \eqref{eq-uA-vH}.
We start with some local information near a connected surface $S$ on which $u$ is a constant with 
$ \nabla u \ne 0$.
By \eqref{eq-Kato-rigidity} and Lemma \ref{lem-K-ineq}, along $S$, $u$ satisfies
\be \label{eq-using-Kato}
\nabla^2 u = \frac{3}{2} \, \nabla^2 u(\nu, \nu) 
\lf( \nu \otimes \nu - \frac13 g \ri), \ \text{where} \ \nu = \frac{ \nabla u  }{ | \nabla u  |} .
\ee 
This implies
\begin{itemize}
\item $ |\nabla u |$ is a constant on $ S$; and 
\item $ | \nabla u | \, \Pi = - \frac12 \nabla^2 u(\nu, \nu) \gamma $, 
where $ \Pi$ is the second fundamental form of $ S $ with respect to $\nu$ and 
$\gamma$ is the metric on $S$ induced from $g$.
As a result,  the mean curvature $H $ of $S$ satisfies 
$ H | \nabla u | = - \nabla^2 u (\nu, \nu) $.
\end{itemize}
To get more information, we can consider the level set foliation of $u$ near $S$.
More precisely, let $ U $ be a bounded, connected open set in $S$ and let $W$ be an open neighborhood 
of $ U $ in $M$ obtained by flowing $ U$ along the integral flow of $ | \nabla u |^{-2}  \nabla u$.
$ W$ is diffeomorphic to $ (- \delta, \delta ) \times U $ for some $ \delta > 0$ and 
the metric $g$ on $ W$ can be written as
\be \label{eq-metric-gauge}
g = \frac{1}{ f ( s)^2 } d s^2 + \gamma_s .
\ee
Here $ f = | \nabla u | $ and $ \gamma_s $ is the induced metric on $ \{ s \} \times U$, 
which is part of the level set $\Sigma_{ t + s}$.
For convenience, we may assume $ u = 0 $ on $S$. Then $ u = s $ on $ W$, and 
\be \label{eq-nabla2-u-nu}
\nabla^2 u (\nu, \nu) = f^2 \, \nabla^2 s (\p_s, \p_s) = f f_{, s} ,
\ee
where $ f_{,s} = \frac{d f}{ds}$. Thus, $ \Pi$ and $ H$ satisfy 
\be \label{eq-Pi-and-H}
\Pi = - \frac12 f_{,s} \gamma \ \ \text{and} \ 
H = - f_{,s}  . 
\ee
On the other hand, \eqref{eq-metric-gauge} implies 
$ \Pi = \frac12 f \frac{d}{d s} \gamma_s $. 
Therefore, $ f \frac{d}{d s} \gamma_s + f_{,s} \gamma =  0 $ which shows 
\be
 f \gamma_s = \sigma  ,
\ee
where $\sigma$ is a fixed metric  on $ U$. As a result, on $ W = (-\delta, \delta) \times U $, 
\be \label{eq-R-scalar}
g = \frac{1}{ f(s)^2} d s^2 + \frac{1}{f(s) } \sigma . 
\ee

Next, we incorporate the scalar curvature $R$ and the vector fields $X$ and $Y$.
Suppose $ U \subset \subset S \cap \Omega$. Then, for $ \delta $ small,  $ W \subset \Omega$.
By \eqref{eq-R-X-1}, \eqref{eq-nu-X-1} and \eqref{eq-nablau-X}, on $ W$, we have
\be \label{eq-R-more}
 R =  \frac23  |X|^2 + 2 \div X  \ \ \text{and} \ \
 X = -  \frac{3}{2 | \nabla u | }  \nabla | \nabla u | .
\ee
This shows,  in terms of $ f = | \nabla u |$,  
\be
R = - \frac32 f_{,s}^2 + 3 f f_{,ss} . 
\ee
On the other hand, \eqref{eq-R-scalar} and direct calculation gives
\be
R = - \frac32 f_{,s}^2 + 2 f f_{,ss} + 2 f K ,
\ee
where $ K$ is the Gauss curvature of the metric $\sigma$ on $ U$. 
As a result,
\be \label{eq-fss}
f_{,ss} = 2 K . 
\ee
Consequently, $ \sigma$ is a metric of constant Gauss curvature on $ U$ and $ f_{,ss}$ equals a constant that 
is independent on $s$.
By using the vector field $Y$, we know the same conclusion holds on 
$ U \subset \subset S \cap ( M \setminus \bar \Omega )$.
As a result, if $ S$ intersects $\Sigma = \p \Omega$ transversely, then the induced metric on $S$ has constant Gauss curvature everywhere. 

We proceed to get information on $u$ near infinity.  
Since $ u $ is asymptotic to a coordinate function, we know there exists a compact set $T$, such that
\be \label{eq-gdu-T}
| \nabla u | = 1 \ \text{on} \ M \setminus T . 
\ee
This can be seen, for instance, by taking $u = u_1$ in $\{ u_i \}_{ 1 \le i \le 3 } $ where $(u_1, u_2, u_3)$ forms 
a harmonic coordinate chart near $\infty$. Then, outside a large coordinate ball $B_r$, each level set of $u$ is 
 $\{ u_1 = c \} \setminus B_r$ for some constant $c$. 
As $ | \nabla u | \to 1 $ at $\infty$ and $ | \nabla u | $ is locally a constant on each regular level set of $u$, we 
have $ | \nabla u | = 1 $ outside $B_r$.

It follows from \eqref{eq-gdu-T},  \eqref{eq-nabla2-u-nu} and \eqref{eq-Pi-and-H} that
\be \label{eq-nabla2-u-H}
\nabla^2 u  (\nu, \nu) = 0  \ \text{and} \ H = 0 \ \ \text{on} \ \Sigma_t \cap ( M \setminus T) 
\ee
for any regular level set $\Sigma_t$.
This, combined with \eqref{eq-using-Kato}, shows
\be
\nabla^2 u = 0 \  \text{on} \ M \setminus T . 
\ee

We are now in a position to prove \eqref{eq-uA-vH}.
Let $ S$ be a connected component of $ \Sigma_t$ with $t \in A$.
By \eqref{eq-Pi-and-H},  $ H$ is locally a constant and hence a constant on $S$. 
If $ S$ is non-compact,  then $ S \cap ( M \setminus T) \ne \emptyset $. 
By \eqref{eq-nabla2-u-H}, $H = 0 $ on $ S \cap ( M \setminus T) $.
Hence $H=0$ everywhere on $S$. 
By \eqref{eq-nabla2-u-nu} and \eqref{eq-Pi-and-H}, 
$ \nabla^2 u (\nu, \nu) = 0 $ on $S$. Therefore, by \eqref{eq-using-Kato}, 
$ \nabla^2 u = 0 $ on $S$.

If $ S$ is compact,  $S$ must be a torus.
This is because $ \chi(\Sigma_t) = 1$ and any higher genus surface has negative Euler characteristics.
Let $ \sigma $ be the induced metric on $S$. 
Since $ t $ is a regular value of both $ u$ and $ u|_\Sigma$, $ S$ intersects $\Sigma$ transversely.
Hence $ \sigma$ has constant Gauss curvature $K$.
By Gauss-Bonnet theorem, $ K = 0 $. 

Now consider an open neighborhood $W$ of $S$, diffeomorphic to $(-\delta, \delta) \times S$ 
for some $\delta >0$, on which $g$ takes the form of \eqref{eq-R-scalar}. 
By \eqref{eq-fss} and the fact $K=0$, we have
\be
\frac{d^2}{d s^2} f (s) = 0,  \ \forall \ s \in (-\delta, \delta). 
\ee
Therefore, $ f(s) = | \nabla u | (s) = a s + b $ for some constants $ a$ and $b$, with $ b = | \nabla u |$ on $ S$.

If $ a > 0$, the integral flow of $ | \nabla u |^{-2} \nabla u $ starting from $ S$ exits on $[0, \infty)$.
This means $u$ goes to $ + \infty$ along the flow. Hence the flow eventually enters  $M \setminus T$.
By \eqref{eq-gdu-T}, $ | \nabla u | (s) = 1 $ for large $s$. This contradicts with $ | \nabla u | (s) = a s + b$. 

If $ a < 0 $, by consider the integral flow of $ - | \nabla u |^{-2} \nabla u $ starting from $ S$, we again have 
a contradiction. 

Therefore, $ a = 0$, i.e. $f_{,s} = 0 $. This combined with \eqref{eq-nabla2-u-nu} shows 
$$ \nabla^2 u (\nu, \nu) = 0 \ \text{on} \ S . $$ 
We conclude $ \nabla^2 u = 0 $ on $ S$ by \eqref{eq-using-Kato}. 
This completes the proof of \eqref{eq-uA-vH}.
\end{proof}

\begin{remark}
If the metric is merely $C^{0,1}$ across $\Sigma$, like that in Corollary \ref{cor-WY}, 
new difficulties seem to arise in analyzing rigidity for the case  $C_1=C_2=\frac{2}{3}$.
For instance, the function $u$ is $ W^{2,p}$ (thus $ C^{1, \alpha}$) across $ \Sigma$. A regular level set $S$ of $u$, intersecting transversely with $ \Sigma$ at a curve $\mathcal{C}$, may not be $C^{2}$ across $\mathcal{C}$. 
In particular, the two Gauss curvature constants on both sides of $\mathcal{C}$ in $S$ might not agree. 

It is plausible to work with the condition $ | \nabla u | = 1 $, instead of $\nabla^2 u = 0 $, in the claim \eqref{eq-uA-vH}
since $ | \nabla u | $ is $C^0$ on $M$. By an argument similar to part of the proof of Theorem \ref{thm-rigidity-2-3},
it indeed can be shown that $ | \nabla u | = 1 $ along any non-compact component of $\Sigma_t$ for $ t \in A$. 
Thus, under a more stringent topological assumption that $M $
contains no non-separating spheres and torus,  the rigidity statement in Corollary \ref{cor-WY} 
holds for $C_1= C_2= \frac23$.
\end{remark}

\subsection{Fill-in of $(M, g)$ by its conformal deformation} \label{sec-conformal-fill-in}

Given an asymptotically flat $(E^3, g)$ with boundary $ \Sigma$, consider its Green function $G$, which is determined by
\be
\Delta_g G = 0 \ \text{on} \ E,  \  \ G = 1 \ \text{at} \ \Sigma , \ \ \text{and} \  G \to 0 \ \text{at} \ \infty .
\ee
Let $ g_* = G^4 g $. Let $ \Omega = E \cup \{q  \}$ denote the one-point compactification of $M$ by including a point $ q $ representing 
$\infty$. It follows from the  asymptotically flatness of $g$  and the asymptotic expansion of $G$  that $g_*$ is continuously extendable across $q$, and moreover, $g _* \in W^{1,p} (B)$ 
for some $ p > 6$ in a neighborhood $B$ of $ q$ (see Lemma 6.1 in \cite{MMT} for instance).

This $(\Omega, g_*)$ serves as a fill-in of $(E, g)$. The metric on $(E, g) \cup (\Omega, g_*)$, glued along $ \Sigma$, is a $ W^{1,p}_{loc} $ metric
on $E \cup \Omega$.
In particular,  the function $u$ in Section \ref{Existence} exists on $(E, g) \cup (\Omega, g_*)$. 
Moreover, at most one regular level set of $u$ passes through the point $q$. 
If $E$ is topologically $ \R^3 $ minus a ball,  $E \cup \Omega$ is homeomorphic to $ \R^3$. Therefore, 
the proof of Theorem \ref{main-1-intro} applies to show
\be \label{eq-E-Omega-*}
8 \pi \m \ge  \int_{E}  \frac{1}{2} \left[ \frac{ | \nabla^2 u |^2 }{  | \nabla u |  }  +  R | \nabla u |   \right] 
 + \int_{\Omega}  \frac{1}{2} \left[ \frac{ | \nabla^2 u |^2 }{  | \nabla u |  }  +  R | \nabla u |   \right] 
 + \int_{\Sigma} ( H_{*} - H ) | \nabla u |  .
\ee
Here  $H_*$ is  the mean curvature of $\Sigma$ in $(\Omega, g_*)$ with respect to the outward normal and is given by 
 \be
 H_* = -  H - 4 \frac{\p G}{\p \mu}  ,
 \ee
where $\mu $ is the $\infty$-pointing unit normal to $ \Sigma$ in $(M,g)$ and $H$ is the mean curvature of $ \Sigma$ in $(M, g)$ 
with respect to $ \mu$. As a result,
\be
H_* - H = - 2 H - 4 \frac{\p G}{\p \mu} . 
\ee

Now let $ u $ denote  the restriction of $u$ on $ E $ and let $ u_{_\Omega}$ be the restriction of $u$ to $ \Omega$.
On $ \Omega = M \cup \{ q \} $, let $ v = u_{_\Omega} G $, then
\be
 \Delta_g v = 0 ,  \ \   v \to 0 \ \text{at} \ \infty, \  \ v = u \  \text{at} \  \Sigma,
\ee
and
\be \label{eq-bu-1}
\frac{\p v}{\p \mu} =  \frac{\p u_{_\Omega} }{\p \mu} + u \frac{\p G}{\p \mu} .
\ee
As  $ u $ is $C^1$ on $(E, g) \cup (\Omega , g_*)$, 
\be \label{eq-bu-2}
\frac{\p u_{_\Omega}}{\p \mu} = - \frac{\p u}{\p \mu} .
\ee
(Here $\nu$ points inward in the conformal fill-in $(\Omega, g_*)$.)
Set $ w = u - v $, which satisfies 
\be
\Delta_g w = 0 \ \text{on} \ E, \ \ w = 0 \ \text{at} \ \Sigma, \ \ w \ \text{is asymptotic to } x_i \ \text{at} \ \infty. 
\ee
By \eqref{eq-bu-1}, \eqref{eq-bu-2} and the fact $ u = v $ at $ \Sigma$, we have
\be \label{eq-bdry-harmonic-functions}
2 \frac{\p v}{\p \mu} = v \frac{\p G}{\p \mu} - \frac{\p w}{\p \mu}. 
\ee
Corollary \ref{Green-intro} then follows from \eqref{eq-E-Omega-*} and \eqref{eq-bdry-harmonic-functions}.

\section{Convergence of the Brown-York mass} \label{sec-BY-mass}

In this section, we apply Theorem \ref{main-1-intro} to study convergence of the Brown-York mass of 
surfaces approaching infinity in an asymptotically flat $3$-manifold.

Recall that the Brown-York mass of a surface $\Sigma$ with positive Gauss curvature, with mean curvature $H$ in 
a $3$-manifold $(M, g)$, is 
\be
\m_{_{BY}} (\Sigma) = \frac{1}{8\pi} \int_{\Sigma} (H_0 - H) \, d \sigma, 
\ee
where $ H_0$ is the mean curvature of the isometric embedding of $\Sigma$ in the Euclidean space $ \R^3$.

The fact $ \m_{_{BY} } (\cdot)$ converges to the mass of an asymptotically flat manifold $(M, g)$ 
along large coordinate spheres was demonstrated by Fan, Shi and Tam \cite{FST}.
The convergence was later established for large nearly round surfaces by Shi, Wang and Wu \cite{SWW-09}.
Similar results for surfaces of revolution were obtained by Fan and Kwong \cite{FK-1, FK-2} in asymptotically 
Schwarzschild manifolds. 

Using Theorem \ref{main-1-intro}, we demonstrate the convergence of $\m_{_{BY} }(\cdot)$ for a large class of 
surfaces, which in particular includes surfaces obtained by scaling any fixed convex surface in the background Euclidean space.

\begin{theorem} \label{thm-mass-bymass}
Let $(M^3, g)$ be an asymptotically flat manifold with a coordinate chart $\{ x_i \}$ in which 
the metric coefficients $g_{ij}$ satisfies the asymptotically flat condition \eqref{eq-g-AF} with $k \ge 4$ and $ p > 3$.
Let $ \{  \Sigma_r \}_{ r \ge r_0}  $ be a $1$-parameter family of surfaces approaching the infinity with
$ r = \min_{\Sigma_r} \{ | x | \ | \ x \in \Sigma_r \} $.
Suppose the rescaled surfaces
$$
\widetilde \Sigma_r : = \left\{ \frac{1}{r} x \, | \, x \in \Sigma_r \right \} \subset \R^3
$$
satisfy curvature condition 
\be \label{eq-condition-kappa}
0 < k_1 < \kappa_\alpha < k_2, \ \alpha = 1, 2, 
\ee
where $ \{ \kappa_\alpha \}$ are the principal curvatures of $ \widetilde \Sigma_r $  with respect to the background Euclidean metric in $ \R^3$,
and $ k_1, k_2$ are two positive constants independent on $r$. 
Then 
\be \label{eq-BY-limit-1}
 \lim_{r \to \infty} \m_{_{BY} }( \Sigma_r) = \m ,
\ee
where $ \m $ is the mass of $(M, g)$. 
\end{theorem}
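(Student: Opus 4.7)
The plan is to apply Theorem \ref{main-1-intro} to the pair $(E_r, g)$ (the exterior of $\Sigma_r$ in $M$) and $(\Omega_0^r, g_0)$, where $\Omega_0^r \subset \R^3$ is the Euclidean domain enclosed by the isometric embedding of $\Sigma_r$. The condition \eqref{eq-condition-kappa-intro} forces $\Sigma_r$ to have positive intrinsic Gauss curvature for large $r$, so Weyl's theorem provides such an $\Omega_0^r$, and the mean curvature $H_0$ of its image coincides with the one appearing in $\m_{_{BY}}(\Sigma_r)$. The glued manifold is homeomorphic to $\R^3$, so condition (T) is automatic. For large $r$, the global harmonic function $u$ (asymptotic to a coordinate function, say $x_1$) is uniformly close to $x_1$ throughout the glued manifold, so $|\nabla u|$ stays bounded below by a positive constant. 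By Remark \ref{rem-mass-equation}, \eqref{eq-main-intro} then becomes the equation
\begin{equation*}
16\pi\,\m = I_{E_r} + I_{\Omega_0^r} + 2 \int_{\Sigma_r}(H_0 - H) |\nabla u|,
\end{equation*}
where $I_{E_r}$, $I_{\Omega_0^r}$ are the interior integrals.

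The main task is to show $I_{E_r} + I_{\Omega_0^r} \to 0$ and $|\nabla u| = 1 + O(r^{-\tau})$ uniformly on $\Sigma_r$. Both rely on the standard elliptic decay $u - x_1 \in W^{2,p}_{1-\tau}$, which holds because $u$ is harmonic on the glued asymptotically flat manifold with the prescribed asymptotic behavior. This decay combined with $|\nabla x_1|_g = 1 + O(|x|^{-\tau})$ immediately gives the claim on $\Sigma_r$. For $I_{E_r}$, the integrand $|\nabla^2 u|^2/|\nabla u|$ is of order $|x|^{-2-2\tau}$, so its integral over $\{|x| \ge r\}$ is $O(r^{1-2\tau}) = o(1)$, while $\int_{E_r} R|\nabla u|$ tends to zero by integrability of $R$. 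For $I_{\Omega_0^r}$, I would rescale by $1/r$: setting $\tilde x = x/r$ and $\tilde u_\Omega(\tilde x) = u_\Omega(r\tilde x)/r$, one checks that $\tilde u_\Omega$ is Euclidean-harmonic on the fixed-scale domain $\tilde \Omega_0^r$, with boundary values $\tilde x_1 + O(r^{-\tau})$ (absorbing both the $W^{2,p}_{1-\tau}$ decay of $u - x_1$ and the Cohn--Vossen-type stability of the Weyl embedding against $O(r^{-\tau})$ perturbations of the intrinsic metric). Uniform elliptic regularity---available since $\partial \tilde \Omega_0^r$ has principal curvatures pinched between $k_1$ and $k_2$---then gives $|\nabla^2 \tilde u_\Omega| = O(r^{-\tau})$ up to the boundary, and the scaling identity
\begin{equation*}
\int_{\Omega_0^r} \frac{|\nabla^2 u_\Omega|^2}{|\nabla u_\Omega|} \, dx = r \int_{\tilde \Omega_0^r} \frac{|\nabla^2 \tilde u_\Omega|^2}{|\nabla \tilde u_\Omega|} \, d\tilde x
\end{equation*}
yields $I_{\Omega_0^r} = O(r^{1-2\tau}) = o(1)$.

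Combining, the boundary integral rewrites as $8\pi\,\m_{_{BY}}(\Sigma_r) + \int_{\Sigma_r}(H_0-H)(|\nabla u|-1)$, with the error bounded by $O(r^{-\tau}) \cdot \int_{\Sigma_r}|H_0-H| = O(r^{-\tau}) \cdot O(r^{1-\tau}) = o(1)$. Here $|H_0-H| = O(r^{-1-\tau})$ is obtained by comparing $H$ on $\Sigma_r \subset (M,g)$ first with the coordinate-Euclidean mean curvature of $\Sigma_r \subset \R^3$ (differing by $O(r^{-1-\tau})$ via $g_{ij} - \delta_{ij} = O(|x|^{-\tau})$), then with $H_0$ via isometric-embedding stability, while $|\Sigma_r| = O(r^2)$ comes from Bonnet's theorem applied to $\tilde \Sigma_r$ (whose Gauss curvature $\ge k_1^2$ yields a uniform diameter bound, hence a uniform area bound). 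Altogether $16\pi\,\m = 16\pi\,\m_{_{BY}}(\Sigma_r) + o(1)$, giving the desired convergence.

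The main obstacle is the uniformity in $r$ of the elliptic regularity for $\tilde u_\Omega$, combined with a quantitative stability of the Weyl embedding: the surfaces $\tilde\Sigma_r$ and domains $\tilde \Omega_0^r$ are only known to have uniformly bounded but varying geometry, so the Schauder constants must depend only on the bounds $k_1, k_2$ in \eqref{eq-condition-kappa-intro}. This can be handled either by direct Schauder estimates with constants controlled by the $C^{1,1}$ modulus of the boundary, or by a compactness/blow-up argument---any subsequential $C^{1,\alpha}$ limit of $\tilde\Sigma_r$ is a smooth convex surface on which the limiting harmonic function is exactly $\tilde x_1$, closing the argument by contradiction.
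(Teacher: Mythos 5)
Your plan and the paper's proof follow the same overall strategy: replace the interior of $\Sigma_r$ with the convex Euclidean fill-in supplied by Weyl's embedding theorem, apply Theorem \ref{main-1-intro} (with Remark \ref{rem-mass-equation} to upgrade the inequality to an equation), and estimate each term as $r\to\infty$. The scaling by $1/r$, the use of \eqref{eq-condition-kappa} to bound $\widetilde\Sigma_r$ in a fixed annulus, and the final error bookkeeping ($|H_0-H|=O(r^{-1-\tau})$, $|\Sigma_r|=O(r^2)$, $|\nabla u|-1=O(r^{-\tau})$) all match the paper.

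However, there is a genuine gap at the heart of the argument that the paper treats carefully and you gloss over: the uniformity in $r$ of the weighted estimate for the harmonic function $u_r$ on the glued manifold. You write that ``the standard elliptic decay $u - x_1 \in W^{2,p}_{1-\tau}$ $\ldots$ holds because $u$ is harmonic on the glued asymptotically flat manifold with the prescribed asymptotic behavior,'' and then use this to conclude $|\nabla u_r| = 1 + O(r^{-\tau})$ on $\Sigma_r$ and $I_{E_r}=O(r^{1-2\tau})$. But the underlying glued manifold $(E_r,g)\cup(\Omega_0^r,g_0)$ is different for each $r$, so $u_r$ is a different function solving a different problem; the qualitative membership $u_r - x_1\in W^{2,p}_{1-\tau}$ does not produce an $r$-independent bound. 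Bartnik's operator estimates, which are what one invokes for this, require the family of Laplacians to be a uniform perturbation of $\Delta_{g_0}$ on a \emph{fixed} $\R^3$, and one cannot even formulate this without first putting coordinates on the fill-in that extend the asymptotically flat chart across $\Sigma_r$. This is precisely what the paper constructs: the diffeomorphism $\tilde F_r:\widetilde\Omega_r\to\R^3$, interpolating the isometric embedding $\widetilde X_r$ and $\mathrm{Id}$ near $\widetilde\Sigma_r$ and matching unit normals, yields a single background $(\R^3,\hat g_r)$ with $\|\hat g_r-g_0\|_{W^{1,p}_0(\R^3)}\to 0$; Bartnik's Corollary 1.16 then gives the quantitative bound \eqref{eq-pw-1} with an $r$-uniform constant. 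Without an analogue of $\tilde F_r$ you have no comparison coordinate $x_1$ inside the fill-in, hence no way to run a uniform Fredholm estimate, and the claimed $O(r^{-\tau})$ decay of $u_r-x_1$ is unsupported.

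Your suggested fallback of compactness/blow-up does not repair this, because the boundary-error term demands an explicit rate. You yourself estimate $\int_{\Sigma_r}(H_0-H)(|\nabla u|-1) = O(r^{-\tau})\cdot O(r^{1-\tau})$, which tends to zero only because the $|\nabla u|-1$ factor carries the rate $O(r^{-\tau})$; a compactness argument would furnish only $o(1)$, and $o(1)\cdot O(r^{1-\tau})$ need not vanish when $\tau<1$. (A similar issue arises in your pointwise claim $|\nabla^2 u| = O(|x|^{-1-\tau})$: $W^{2,p}_{1-\tau}$ decay does not directly give pointwise Hessian decay, and the Sobolev/Schauder upgrade again requires uniformity in $r$.) The explicit gauge, the $C^{3,\alpha}$ estimate on $\widetilde X_r-\mathrm{Id}$ from Nirenberg's openness (which is also why the hypothesis is strengthened to $k\ge 4$), and Bartnik's Corollary 1.16 are exactly the ingredients that convert your heuristic rates into actual ones, and they should not be elided.
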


Note that condition \eqref{eq-condition-kappa} is equivalent to 
\be \label{eq-condition-kappa-same}
 \bar K > C_1 > 0 \ \ \text{and} \  0< \bar H < C_2 
\ee
for some constants $ C_1$, $ C_2$ independent on $r$. Here $\bar K$, $\bar H$ denotes the Gauss curvature, the mean curvature
of  $ \widetilde \Sigma_r $  with respect to the background Euclidean metric in $ \R^3$, respectively.

We first outline the idea behind the proof of Theorem \ref{thm-mass-bymass}. 
By condition \eqref{eq-condition-kappa} and the asymptotically flatness of $g$, $\Sigma_r$ 
has positive Gauss curvature for large $r$. Thus $\Sigma_r$ isometrically embeds in the Euclidean space $ (\R^3, g_0)$ as a convex surface 
$ \Sigma_r^{(0)}$ (\cite{Nirenberg, Pogorelov}). Let $\Omega_r^{(0)} $ be the domain 
enclosed by $\Sigma_r^{(0)}$ in $ \R^3$ and $E_r $ be the exterior region of $\Sigma_r$ in $M$. 
The Euclidean domain $(\Omega_r^{(0)}, g_0)$ provides a natural fill-in of $\Sigma_r = \p E_r$.
Consider the manifold $(N_r, g_r) $ obtained by gluing $(\Omega_r^{(0)}, g_0)$ and $(E_r, g)$ along $\Sigma_r $.
Intuitively, $ (N_r, g_r)$ approaches $(\R^3 , g_0)$ in a suitable sense as $r \to \infty$. Hence
applying Theorem \ref{main-1-intro} to $(N_r, g_r)$, we would expect the harmonic function $u$ on $(N_r, g_r)$ to 
approach a standard coordinate function on $(\R^3, g_0)$. In particular \eqref{eq-main-intro} would be an equation, 
 the bulk integral on its right side would tend to zero, and the surface integral would be $ \m_{_{BY} }( \Sigma_r)+ o (1)$. 
 
To implement this idea, we need to have uniform control on the isometric embedding and 
to impose a uniform gauge  when identifying $(E_r, g) \cup (\Omega_r^{(0)}, g_0)$ with $ (\R^3, g_r)$ so that 
a fixed background is available when estimating $u$. We use condition \eqref{eq-condition-kappa} for achieving both these purposes. 

\begin{proof}
For each large  $r$, let $M_{r}  = \{ | x | \ge  r \} \subset M$.
Consider the diffeomorphism
\bee
\Phi_r : M_{r}\longrightarrow E = \{ |x | \ge 1 \ | \ x \in \R^3 \}  ,
\eee
where $\Phi_r ( x) = r^{-1} x $.
Define a metric $ \tilde g_r$ on $E$ by
$ \tilde g_r = r^{-2} ( \Phi_r^{-1})^*(g) $, 
then 
\be
 {\tilde g}_{r \, ij} (x) = g_{ij} (rx) , \,  x \in E.
\ee

By definition, $ \widetilde \Sigma_r = \Phi_r ( \Sigma_r) \subset E$. 
Let $ g_0$ denote the background Euclidean metric on $ \R^3$. 
Let $\bar \sigma_r$ be the induced metric on $ \widetilde \Sigma_r$ in  $(\R^3, g_0)$ and 
$ \bar K $ be the Gauss curvature of $ ( \widetilde \Sigma_r, \bar \sigma_r)$.
By condition \eqref{eq-condition-kappa}, $ \bar K \ge k_1^2 > 0 $.
This shows the intrinsic diameter of $ (\widetilde \Sigma_r, \bar \sigma_r) $ is bounded above 
by a constant independent on $r$. Thus there exists a constant $ L \ge 1 $ so that $ \widetilde \Sigma_r$ 
lies between $\{ | x | = L \}$ and $ \{ | x | = 1 \}$ for all large $r$. 

In $E$, let  $ T $ be the closed annular region bounded by $ \{ | x | = L \}$ and $ \{ | x | = 1 \}$.
By the asymptotically flatness condition \eqref{eq-g-AF} and weighted Sobolev inequalities, 
\be \label{eq-tgr-g0-1}
|| \tilde g_r - g_0 ||_{C^{3,\alpha} (T)} = O (r^{-\tau} )
\ee
for some $ 0 < \alpha < 1$.
As a result, 
\be
|| \tilde \sigma_r -  \bar \sigma_r ||_{C^{3,\alpha} (\widetilde \Sigma_r)} = O ( r^{- \tau} ),
\ee
\be \label{eq-nu-r-1}
|| \tilde \nu - \bar \nu  ||_{C^3(\widetilde \Sigma_r)} = O (r^{-\tau} ) ,
\ee
and
\be \label{eq-Hest-H-0-1}
|| \tilde H - \bar H ||_{ C^0( \widetilde \Sigma_r)} = O (r^{-\tau} ) .
\ee
Here  $\tilde \sigma_r$, $ \tilde \nu$, $\tilde H$ denote the induced metric, the inward unit normal, 
the mean curvature of $ \widetilde \Sigma_r $ in $(T, \tilde g_r)$, and $ \bar  \nu$, $\bar H$ denote the inward unit normal, 
the mean curvature of $\widetilde \Sigma_r $ in $(T, g_0)$.

By the openness of solutions to the Weyl embedding problem \cite{Nirenberg, LW},
it follows from the conditions $ \widetilde \Sigma_r \subset T$ and $ k_1^2 < K < k_2^2 $
that, for large $r$, 
there is an isometric embedding 
$\widetilde X_r : (\widetilde \Sigma_r, \tilde \sigma_r) \rightarrow \R^3$
so that
\be \label{eq-tXr-1}
|| \widetilde X_r - \mathrm{Id}  ||_{C^{3, \alpha} (\widetilde \Sigma_r)  } \le C || \tilde \sigma_r - \bar \sigma_r ||_{C^{3, \alpha} 
(\widetilde \Sigma_r) } .
\ee
Here $ \mathrm{Id}: \R^3 \rightarrow \R^3 $ denotes the identity map and $ C$ is a constant independent on $r$.
(This estimate was stated for $C^{2,\alpha}$ in \cite{Nirenberg}, but examining sections 5-9 in \cite{Nirenberg} 
shows higher order derives estimate hold as well.)
Moreover, $\tilde X_r$ has the same regularity as the metric $\tilde \sigma_r$ (see section 3 in \cite{Nirenberg}.)

In what follows, we let $  \widetilde  \Sigma_r^{(0)}  = \widetilde X_r ( \widetilde \Sigma_r) $ and let $\widetilde  \Omega^{(0)}_r$ be  the region enclosed by 
$ \widetilde \Sigma_r^{(0)} $ in $ \R^3$. Let $ \tilde \nu_0 $ be the unit inward normal to $ \widetilde \Sigma_r^{(0)}$ in $(\widetilde \Omega^{(0)}_r, g_0)$.
By \eqref{eq-tXr-1}, 
\be \label{eq-nu-0-r-1}
|| \tilde \nu_0 - \bar \nu ||_{C^{2, \alpha} (\widetilde \Sigma_r) } = O (r^{-\tau} ),
\ee
and
\be \label{eq-Hest-H-0}
|| \tilde H_0 - \bar H ||_{C^{0} (\Sigma)} = O (r^{-\tau} ) . 
\ee
Here $ \tilde H_0$ denote the mean curvature of $ \widetilde \Sigma_r^{(0)}$ in $(\R^3, g_0)$.

The Euclidean domains $(\widetilde \Omega_r^{(0)}, g_0)$ provides a natural fill-in of $(\widetilde E_r, \tilde g_r)$,
where $ \widetilde E_r $ denotes the exterior of $\widetilde \Sigma_r$ in $E$. 
To derive estimates on solutions to the corresponding Laplacian equation,
 we need a gauge to identify $ \widetilde E_r \cup \widetilde \Omega_r^{(0)}$ with $ \R^3$, which can be estimated via $r$.
For this purpose,  let $ \widetilde \Omega_r$ be the region enclosed by $ \widetilde \Sigma_r$ in $\R^3$.
If we could extend $\widetilde X_r :  \widetilde \Sigma_r \rightarrow  \widetilde \Sigma_r^{(0)} $ to a diffeomorphism
$ F_r: \widetilde \Omega_r \rightarrow \tilde \Omega_r^{(0)}$,  
such that $ (F_r)_* ( \tilde \nu)  = \tilde \nu_0 $ and $ F_r$ remains close to $\mathrm{Id}$, 
then $ F_r^* (g_0)$ on $\widetilde \Omega_r$ 
and $\tilde g_r$ on $\widetilde E_r$ would form a locally Lipschitz metric $\hat g_r$ on $ \R^3 $, 
which is close to $g_0$. We may then carry out analysis on this  fixed $\R^3$ equipped with $\{ \hat g_r \}$.
We do not know how to construct such an map  $F_r: \widetilde \Omega_r  \rightarrow \widetilde \Omega_r^{(0)}$. 
Instead, we produce an alternative  map $\tilde F_r: \widetilde \Omega_r  \rightarrow \R^3$ below, which suffices for the purpose 
of  analyzing $\m_{_{BY} } (\Sigma_r)$.

By condition \eqref{eq-condition-kappa}, 
the second fundamental forms of $\widetilde \Sigma_r$ are uniformly bounded in $(\R^3, g_0)$, and so are the second fundamental 
forms of $\widetilde \Sigma_r^{(0)}$ in $(\R^3, g_0)$ by \eqref{eq-tXr-1}.
It follows that $\widetilde \Sigma_r$, $\widetilde \Sigma_r^{(0)}$ have Gaussian tubular neighborhoods in $(\R^3, g_0)$ which have a fixed width 
independent on $r$.  For small $ s \ge 0$,  consider the map
\be
\Phi_r ( \omega  + s \tilde \nu) = \widetilde X_r (\omega) + s \tilde \nu_0 .
\ee
By \eqref{eq-tgr-g0-1}, there exists a small $\epsilon > 0 $, independent on $r$, such that
\begin{itemize} 
\item  $ U_{r , \epsilon} = \{ \omega + s \tilde \nu \ | \ \omega \in \widetilde \Sigma_r, \, 0 \le s < 2 \epsilon \}$ is an open neighborhood of 
$\widetilde \Sigma_r$ in $\widetilde \Omega_r $,  diffeomorphic to $ \widetilde \Sigma_r \times [0, 2 \epsilon)$; and 

\item $ \Phi_r$ is a diffeomorphism from $U_{r, \epsilon}$ to a Gaussian neighborhood of $\widetilde \Sigma_r^{(0)}$ in $(\widetilde \Omega_r^{(0)}, g_0)$.
\end{itemize} 
Let $ \eta$ be a fixed smooth function on $ [0, \infty)$, independent on $r$, satisfying   $ 0 \le \eta \le 1 $, 
$ \eta = 1 $ on $[0, \frac12 \epsilon]$,
and $\eta = 0 $ on $[\epsilon , \infty)$. Define  $  \tilde F_r : \widetilde \Omega_r \rightarrow \R^3 $, which interpolates $\Phi_r$ and $ \mathrm{Id}$, by
\begin{itemize}
\item $ \tilde F_r (x) = \mathrm{Id} (x)$, if $ x \notin U_{r, \epsilon}$; 
\item $ \tilde F_r (x) = \Phi_r (x ) \eta (s) + \mathrm{Id} (x) ( 1 - \eta (s) ) $, if $ x = \omega + s \tilde \nu_r \in U_{r, \epsilon}$.
\end{itemize}
It follows from \eqref{eq-nu-r-1},  \eqref{eq-tXr-1} and \eqref{eq-nu-0-r-1}
that
\be \label{eq-gauge-F-nu}
|| \tilde F_r - \mathrm{Id} ||_{C^{2, \alpha} (\widetilde \Omega_r) } \le C (  || \tilde X_r - \mathrm{Id} ||_{C^{2, \alpha}(\widetilde \Sigma_r)} 
+ || \tilde \nu - \tilde \nu_0 ||_{C^{2, \alpha} (\widetilde \Sigma_r) } ) = O ( r^{-\tau} ).
\ee
In particular, for large $r$, $\tilde F_r: \widetilde \Omega_r \rightarrow \R^3$ is an immersion.
Define $ \tilde  g_r = \tilde F_r^* (g_0)  $ on $\widetilde \Omega_r$.  By construction, $(\widetilde \Omega_r, \tilde   g_r)$ satisfies
\begin{itemize}
\item[a)] the induced metric from $\tilde g_r $ on $\widetilde \Sigma_r$ is  $\tilde \sigma_r$; 

\item[b)] $ \tilde \nu $ is the inward unit normal to $\widetilde \Sigma_r$ in $(\widetilde \Omega_r, \tilde  g_r)$, since $ ( \tilde F_r)_* ( \tilde \nu) = \tilde \nu_0 $;
\item[c)] $ || \tilde  g_r - g_0 ||_{C^{1,\alpha}(\widetilde \Omega_r) } = O (r^{-\tau} ) $.
\end{itemize}
Moreover, the mean curvature of $ \widetilde \Sigma_r$ in $(\widetilde \Omega_r, \tilde g_r)$ equals $\tilde H_0$, which is the mean curvature
of $\tilde \Sigma_r^{(0)}$ in $(\R^3, g_0)$.

We now write  $( \R^3, \hat g_r) = (\widetilde E_r, \tilde g_r) \cup ( \widetilde \Omega_r, \tilde g_r )$, where $ \hat g_r$ is a $C^{0,1}_{loc}$ metric on $ \R^3$.
Moreover, by properties a), b), c) above, 
\be \label{eq-p-g-0-1}
|| ( \hat g_{r} - g_0)_{ij} ||_{W^{2, p}_{ - \tau} (\widetilde E_r) } = r^{-\tau} || g_{ij} - \delta_{ij}  ||_{W^{2, p}_{ - \tau} (E_r) } 
= O (r^{-\tau}),
\ee
where $E_r$ is the exterior of $\Sigma_r $ in $(M,g)$, and 
\be \label{eq-p-g-B}
|| \hat g_r - g_0 ||_{C^{1} (\widetilde \Omega_r)} = O ( r^{-\tau} ).
\ee
Consider the operator 
\be
\Delta_{\hat g_r}: W^{2,p}_{1 - \tau} ( \R^3) \longrightarrow W^{0,p}_{- 1 - \tau} ( \R^3).
\ee
Here $ W^{k, p}_\delta (\R^3)$ is the weighted Sobolev space on $ \R^3$, consisting of functions 
$ v \in W^{k,p}_{loc} (\R^3)$ such that 
\be
|| v ||_{W^{k,p}_{\delta} ( \R^3 ) } =\sum_{|\beta|\leq k} 
||  \partial^\beta u \, ( 1 + |x|^2)^{ \frac12 ( |\beta| - \d -\frac{3}{p} ) } ||_{L^p (\R^3 )} < \infty. 
\ee
By  \eqref{eq-p-g-0-1} and \eqref{eq-p-g-B}, $\Delta_{\hat g_r} x_1  \in W^{0, p}_{ - 1 -  \tau } (\R^3)  $
and
\be \label{eq-delta-x1}
|| \Delta_{\hat g_r} x_1||_{W^{0, p}_{ - 1 -  \tau } ( \R^3) }    = O ( r^{-\tau} ). 
\ee
By the usual construction of harmonic coordinates (see Section 3 in \cite{Bartnik} for instance), 
there exists $ w_r \in W^{2, p}_{1 - \tau} ( \R^3)$ so that 
\be
\Delta_{\hat g_r}  w_r = \Delta_{\hat g_r} x_1 .
\ee
Let $ u_r = x_1 - w_r  $, then $ u_r$ satisfies  $\Delta_{\hat g_r}  u_r = 0 $ and
$  u_r $ is asymptotic to $ x_1$.

We estimate $ w_r $ by applying Corollary 1.16 of \cite{Bartnik}. 
Shrinking $\tau $ if needed, we may assume $ \tau \in (\frac12, 1)$. 
By \eqref{eq-p-g-0-1} and \eqref{eq-p-g-B},
\be
\lim_{r \to \infty} ||  \hat g_r - g_0 ||_{W^{1,p}_{0} (\R^3)} = 0 . 
\ee
By  Corollary 1.16 in \cite{Bartnik}, 
\be \label{eq-Bartnik}
|| w_r - \mathrm{Ker} ( {\Delta_{\hat g_r } } )   ||_{W^{2,p}_{1 -\tau} (\R^3) } \le C || \Delta_{\hat g_r}  w_r  ||_{W^{0,p}_{- 1 - \tau } (\R^3) } .
\ee
Here $ \mathrm{Ker} ( {\Delta_{\hat g_r } } )  $ is the Kernel of $\Delta_{ \hat g_r} : W^{2, p}_{1 - \tau} (\R^3) 
\rightarrow W^{0,p}_{-1 - \tau} (\R^3)  $,
and $C$ is some constant independent on $r$.
Since $ 1 - \tau \in (0 , \frac12)$, we have
\be
\dim \mathrm{Ker} ( {\Delta_{\hat g_r } }  ) = \dim \mathrm{Ker} ( {\Delta_{g_0 } } ) =1 
\ee
(see Proposition 1.15 and Corollary 1.9 of \cite{Bartnik}). 
Thus, 
\be
\mathrm{Ker} ( {\Delta_{\hat g_r } } )  = \{ \text{constant functions} \} .
\ee
This, combined with \eqref{eq-Bartnik}, gives
\be
\begin{split}
\inf_{c \in \R } || w_r - c ||_{ W^{2, p}_{1 - \tau} (\R^3)  } \le & \ C    || \Delta_{\hat g_r}  w_r  ||_{W^{0,p}_{- 1 -  \tau } (\R^3)  } \\
= & \ C  || \Delta_{\hat g_r}  x_1  ||_{W^{0,p}_{- 1 - \tau } (\R^3)  } .
\end{split}
\ee
We pick $c_r \in \R$ as follows:
if $  || \Delta_{\hat g_r}  x_1  ||_{W^{0,p}_{- 1 - \tau }  (\R^3) }  > 0 $, pick $c_r \in \R$ so that
\be
|| w_r - c_r ||_{ W^{2, p}_{1 - \tau} } \le 2 C  || \Delta_{\hat g_r}  x_1  ||_{W^{0,p}_{- 1 - \tau } (\R^3)  } .
\ee
If $  || \Delta_{\hat g_r}  x_1  ||_{W^{0,p}_{- 1 - \tau } (\R^3) }  = 0 $,
we simply pick  $ c_r = 0 $, in which case we also let  $ w_r = 0$.
For these choices of $w_r$ and $c_r$, we have
\be
|| w_r - c_r ||_{ W^{2, p}_{1 - \tau} (\R^3) } \le 2 C  || \Delta_{\hat g_r}  x_1  ||_{W^{0,p}_{- \tau -1} } .
\ee
Together with \eqref{eq-delta-x1}, this shows 
\be  \label{eq-pw-1}
|| \p  w_r   ||_{ W^{1, p}_{- \tau} (R^3) } = O ( r^{-\tau} ).
\ee

We now estimate the gradient  of $ u_r = x_1 - w_r $. 
By  \eqref{eq-pw-1} and Sobolev embedding, 
\be
 | \nabla  u_r   -  \nabla x_1 |_{\hat g_r}  = | \nabla   w_r  |_{\hat g_r}  =  O ( r^{-\tau} ).
\ee
As  $ | \nabla x_1 |_{\hat g_r} = 1 + O ( r^{-\tau} )$, this gives 
\be \label{eq-grad-ur}
| \nabla  u_r |_{\hat g_r} = 1 +  O ( r^{-\tau} ).
\ee
In particular, $ | \nabla  u_r |_{\hat g_r}  > 0 $ on $ \R^3$, for large $r$.
By Theorem \ref{main-1-intro} and Remark \ref{rem-mass-equation}, we have
\begin{align}
\label{eq-main-by}
\m (\hat g_r) =   \frac{1}{16 \pi} \int_{ \R^3   }\left[\frac{|\nabla^2 u_r |_{\hat g_r} }{|\nabla u_r |_{\hat g_r} } 
+R_{\hat g_r}  |\nabla u_r |_{\hat g_r} \right] \, d V_{\hat g_r} + \frac{1}{8\pi} \int_{\widetilde \Sigma_r}  ( \tilde H_{0} - \tilde H ) |\nabla u_r |_{\hat g_r}  \,
d \hat \sigma_r ,
\end{align}
where 
\be
 \m( \hat g_r) = r^{-1} \m (g) \ \ \text{and} \ \ \frac{1}{8\pi} \int_\S ( \tilde H_{0} - \tilde H )  \, 
 d \hat \sigma_r  = r^{-1}  \m_{_{BY} } (\Sigma_r)  .
\ee

We estimate the other terms in the right side of \eqref{eq-main-by}.
By \eqref{eq-grad-ur},
\be
 \int_{\widetilde \Sigma_r} ( \tilde H_0 - \tilde  H ) ( | \nabla  u_r |_{\hat g_r}  - 1 ) \, d \hat \sigma_r  = O ( r^{-2 \tau} ). 
\ee
Here we used  \eqref{eq-Hest-H-0-1} and \eqref{eq-Hest-H-0}, which together imply 
\be
| \tilde  H  - \tilde H_0 | =  O ( r^{ - \tau} ).
\ee

For the integral involving  $R_{\hat g_r}$, we have
\be
\int_{\widetilde \Omega_r} R_{\hat g_r} | \nabla  u_r |_{ \hat g_r} \, d V_{\hat g_r}  = 0 
\ee
because,  $ \hat g_r = \tilde F_r^* (g_0)  $  on $\widetilde \Omega_r$, which is flat. 
On $\widetilde E_r$, by \eqref{eq-grad-ur}, 
\be
\begin{split}
\int_{ \widetilde E_r}  R_{\hat g_r}  |\nabla u_r |_{\hat g_r}  \, d V_{\hat g_r}  \le & \ 
C  \int_{ \widetilde E_r }  R_{\hat g_r}  \, d V_{\hat g_r}  =
C  r^{-1}  \int_{M_r} R_{ g}  \, d V_{g} \\
= & \  o ( r^{-1}  ),
\end{split}
\ee
where in the last step, we used the assumption $R(g) $ is integrable on $(M, g)$. 

For the integral involving $ \nabla^2 u_r$, we have
\be
\begin{split}
\int_{\R^3} |  \nabla^2  u_r |^2_{\hat g_r} \, d V_{\hat g_r} 
\le C \left( \int_{\R^3} |  \p^2 u_r |^2_{g_0} \, d V_{g_0}  + 
\int_{\R^3} |  \hat \Gamma^{k}_{ij} |^2 | \nabla  u_r |^2_{g_0}  \, d V_{g_0}  
\right).
\end{split}
\ee
Here $\hat \Gamma^{k}_{ij}$ is the connection coefficients of $\hat g_r$.
The right side can be handled by H\"{o}lder's inequality.
Let $ \rho = \sqrt { 1 + | x|^2} $, then 
\be
\begin{split}
\int_{ \R^3 } |  \hat \Gamma^{k}_{ij} |^2   \, d V_{g_0}  
\le  & \ C \int_{ \R^3} |  \p \hat g_r |^2   \, d V_{g_0}  \\
= & \ C  \int_{\R^3} |  \p \hat g_r |^2 \rho^{ 2 ( \tau +1 )   - \frac{6}{q} }  \rho ^{ - 2 ( \tau +1 ) + \frac{6}{q} }  \, d V_{g_0}  \\
\le & \  C  || \p \hat g_r ||_{W^{0, p}_{- 1 -\tau} (\R^3) }^2  
\left( \int_{\R^3} \rho^{ \left[- 2 ( \tau + 1) + \frac{6}{q}  \right] \frac{q}{q-2} } \, d x \right)^{ 1 - \frac{2}{q} } \\
= & \ O (r^{ - 2 \tau} ),
\end{split}
\ee
where we used \eqref{eq-p-g-0-1}, \eqref{eq-p-g-B} and the assumption $ \tau > \frac12 $.

Similarly, using the fact $ \p^2 u_r = \p^2 w_r $, we have
\be \label{eq-Hessian-ur}
\begin{split}
\int_{\R^3} | \p^2 u_r |^2 \, d V_{g_0}   
= & \  \int_{\R^3} |  \p^2 w_r |^2 \rho^{ 2 ( \tau +1 )   - \frac{6}{q} }  \rho^{ - 2 ( \tau +1 ) +  \frac{6}{q} }  \, d V_{g_0}  \\
\le & \  || \p^2 w_r ||_{W^{0,p}_{- 1- \tau} ( \R^3 )}^2 
\left( \int_{\R^3}  \rho^{ [ - 2 ( \tau +1 ) + \frac{6}{q} ] \frac{q}{q-2}  }   \, d V_{g_0}  \right)^{ 1 - \frac{2}{q}} \\
= & \ O  ( r^{-2 \tau} ),
\end{split}
\ee
where we used  \eqref{eq-pw-1} in the last step.

It follows  from \eqref{eq-main-by} -- \eqref{eq-Hessian-ur} that
\be
\m (g)  = \frac{1}{ 8 \pi} \int_{\Sigma_r} (H_0 - H) \, d \sigma_r   + O (r^{1- 2 \tau}) + o(1) ,
\ee
which proves \eqref{eq-BY-limit-1}.
\end{proof}

\begin{remark}
In the statement of Theorem \ref{thm-mass-bymass}, we formulate the asymptotically flatness of $g$ 
via weighted Sobolev spaces $ W^{k,p }_{- \tau}$ with $ k \ge 4$ and $p > 3$. 
It is clear from the proof that one can also use a weighted H\"{o}lder space $C^{k, \alpha}_{- \tau} $ (see \cite{Bartnik}), 
with $ k \ge 3$, to specify the asymptotic of $g$. The fact we need a H\"{o}lder decay condition on the third order 
derivative of $g$ comes from estimate \eqref{eq-tXr-1} which is needed for the estimate of $\tilde \nu - \tilde \nu_0$ 
in \eqref{eq-gauge-F-nu}. 
\end{remark}

\vspace{.2cm}

\appendix

\section{The desingularization approach} \label{appen-disingularization}
We give another proof of  \eqref{eq-main-intro}  by combining the mass formula in \cite{BKKS} with the mollification method in \cite{M1}. 
Let $(E, g)$, $(\Omega, g)$, $(M, g)$ and $\Sigma$ be given as in the beginning of Section \ref{sec-mass-fill-in}. 
Applying Proposition 3.1 in \cite{M1} (and its proof therein) 
to $(M, g)$ near $\Sigma$, we have

\begin{proposition}\label{mol}
There exists a family of $C^2$ metrics $\{g_{\d}\}_{\d>0}$ on $M$  with the properties: 
\begin{enumerate}
    \item $g_\d=g$ outside $\S\times(-\d,\d)$; $ g_\d $ converges to $ g$ in $C^0$ on $M$;
    \item $||g_\d||_{C^{0,1}(M)}$ and thus $||\Gamma_{\d}||_{L^\infty(M)}$ are bounded uniformly with respect to $\delta$.
    Here $ \Gamma_\d $ denotes the Christoffel symbol of $g_\d$;
        \item $R_{\delta} (x, t) =O(1)$ in $\S\times \{ {\d^2}<|t|<\d\}$, and
    \item $R_{\delta} (x, t) =O(1)+ 2 (H_{_\Omega} -H)(x) \frac{1}{\d^2}  \phi \left( \frac{t}{\delta^2} \right)$ in $\S\times(- {\d^2}, \d^2)$.
\end{enumerate}
Here $\Sigma \times (-\delta, \delta)$ denotes a Gaussian tubular neighborhood of $\Sigma$ in $(M, g)$ 
which consists of points that are less than $\delta$ distance to $\Sigma$;  $t$ is the coordinate in $(-\d , \d)$ such that
$\p_t$ points to the infinity; $ R_\d (x, t) $ is the scalar curvature of $g_\d$ at a point $(x, t) \in \Sigma \times (- \d, \d )$; 
$O(1)$ is a bounded quantity that is independent on $\d$, and $\phi (t)$ is a standard mollifier on $ \R^1$.
\end{proposition}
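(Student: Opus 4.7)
The plan is to work in Fermi coordinates $(x, t)$ adapted to $\S$, in which the metric takes the warped form $g = dt^2 + h_{ij}(x, t)\, dx^i dx^j$, with $t > 0$ on the $E$-side, $t < 0$ on the $\Omega$-side, and $\p_t$ equal to the $\infty$-pointing unit normal at $t = 0$. Since $g$ is $C^2$ up to $\S$ from each side but only Lipschitz across $\S$, $h_{ij}$ is Lipschitz in $t$ and $\p_t h_{ij}$ jumps at $t = 0$; tracing with $g^{ij}$ at $t = 0$ gives $\tfrac12 g^{ij}\p_t h_{ij}|_{0^+} = H$ and $\tfrac12 g^{ij}\p_t h_{ij}|_{0^-} = H_{_\Omega}$. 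The key structural fact I would use is the standard Fermi-coordinate expression
\bee
R_g = R_{\S_t} - 2\p_t H_{\S_t} - |A_{\S_t}|^2 - H_{\S_t}^2,
\eee
where $H_{\S_t}$ and $A_{\S_t}$ are the mean curvature and second fundamental form of the $t$-slice $\S_t$ with respect to $\p_t$; in the distributional sense, the jump in $H_{\S_t}$ across $t = 0$ produces a delta-type contribution to $R_g$ proportional to $(H_{_\Omega} - H)$.

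I would fix an even mollifier $\phi \in C_c^\infty(\R)$ supported in $(-1, 1)$ with $\int \phi = 1$, set $\phi_\d(s) := \d^{-2}\phi(s/\d^2)$, and define the mollified transverse metric by $\tilde h^\d(x, t) := \int h(x, t - s)\phi_\d(s)\, ds$. To preserve $g$ outside a tubular neighborhood I would choose a cutoff $\chi_\d \in C_c^\infty(-\d, \d)$ equal to $1$ on $[-\d^2, \d^2]$ with $|\chi_\d'| = O(\d^{-1})$ and $|\chi_\d''| = O(\d^{-2})$, then set $h^\d := \chi_\d \tilde h^\d + (1 - \chi_\d) h$, $g_\d := dt^2 + h^\d_{ij}\, dx^i dx^j$ near $\S$, and $g_\d := g$ elsewhere.

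Properties (1) and (2) follow cleanly. $g_\d = g$ outside $\S\times(-\d, \d)$ by construction. Lipschitz continuity of $h$ in $t$ gives both the uniform convergence $\tilde h^\d \to h$ and the pointwise bound $|\tilde h^\d - h| = O(\d^2)$ in $\S \times [-\d, \d]$, so the cutoff cross-terms $\chi_\d'(\tilde h^\d - h) = O(\d)$ and $\chi_\d''(\tilde h^\d - h) = O(1)$ remain bounded; mollification preserves the uniform Lipschitz bound of $h$, so $h^\d$ is uniformly $C^{0,1}$ and the Christoffel symbols of $g_\d$ are uniformly bounded. For (3) and (4) I would apply the Fermi identity to $g_\d$ and examine each term. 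In the annular region $\d^2 < |t| < \d$, one-sided $C^2$ regularity of $h$ yields $\tilde h^\d - h = O(\d^4)$ and $\p_t \tilde h^\d - \p_t h = O(\d^2)$, so the interpolation keeps $\p_t^2 h^\d = O(1)$ (tangential derivatives of $h^\d$ remain bounded because $\p_x h$ is globally continuous across $\S$), whence $R_{g_\d} = O(1)$. In the inner region $|t| < \d^2$, where $h^\d = \tilde h^\d$, every term in the Fermi identity other than $-2\p_t H_{\S_t}^\d$ contributes $O(1)$ by the uniform Lipschitz control; commuting $\p_t$ with the convolution and using the jump $[\p_t h_{ij}]_{t = 0}$ together with $\tfrac12 g^{ij}[\p_t h_{ij}]_{t=0} = H - H_{_\Omega}$ yields $-2\p_t H_{\S_t}^\d(x, t) = 2(H_{_\Omega} - H)(x)\phi_\d(t) + O(1)$, which is exactly the claimed expansion.

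The main obstacle is not any single step but the careful bookkeeping: one has to verify that every contribution involving tangential derivatives $\p_x h^\d$ and $\p_x^2 h^\d$, entering through $R_{\S_t, g_\d}$, $A_{\S_t}^\d$, and the Christoffel symbols, remains uniformly bounded under the mollification and the cutoff interpolation. This works precisely because $g$ is $C^2$ up to $\S$ from each side, so $\p_x^\alpha h$ and $\p_x^\alpha \p_t h$ are one-sided Lipschitz in $t$, and a single $t$-mollification preserves uniform $L^\infty$ bounds on these quantities. The required estimates are essentially those carried out in Proposition 3.1 of \cite{M1}, which the present construction reproduces.
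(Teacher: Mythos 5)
Your proof is correct and reconstructs essentially the same argument as Proposition 3.1 of \cite{M1}, which the paper cites rather than reproves: write $g=dt^2+h(x,t)$ in Fermi coordinates, mollify $h$ in $t$ at scale $\delta^2$, interpolate with a cutoff at scale $\delta$, and read the delta-type term in $R_\delta$ off the Riccati/Gauss identity via the distributional jump $[\p_t h]_0$ whose trace is $2(H-H_{_\Omega})$. All the key estimates (the $O(\delta^2)$ Lipschitz bound for $\tilde h^\delta-h$ globally, the improved $O(\delta^4)$ and $O(\delta^2)$ bounds in the annulus from one-sided $C^2$ regularity, and the cancellation $\tfrac12(\tilde h^\delta)^{ij}[\p_t h]_{0,ij}\phi_\delta=(H-H_{_\Omega})\phi_\delta+O(1)$ in the inner region) are stated and used correctly.
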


Applying equation (6.28) from \cite{BKKS}  to $(M, g_\d)$,  using the fact $g_\d = g $ near infinity, we have

\begin{proposition}\label{IF}
Let $u_\d $ be the harmonic function on $(M, g_\d)$ which is asymptotic to $x_1$.   Then
\begin{equation}
\m (g)    \geq   16 \pi \int_M  \frac{|\nabla_\d^2 u_\d |^2}{|\Na_\d u_\d |}+R_{\delta} |\Na_\d u_\d |.
\end{equation}
Here $\nabla_\d$, $\nabla^2_\d$ denote the operators with respect to $g_\d$.
\end{proposition}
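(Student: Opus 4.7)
The plan is to apply the BKKS mass-harmonic-function inequality directly to the approximating $C^2$ metric $g_\delta$, and then use the fact that $g_\delta$ agrees with $g$ outside a compact set to conclude $\mathfrak{m}(g_\delta) = \mathfrak{m}(g)$.

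First I would verify that $(M, g_\delta)$ satisfies all the hypotheses of the BKKS mass formula. By property (1) of Proposition \ref{mol}, $g_\delta = g$ outside the thin strip $\Sigma \times (-\delta, \delta)$, so $g_\delta$ coincides with $g$ on the asymptotically flat end. Hence $g_\delta$ inherits the asymptotic flatness condition \eqref{eq-g-AF} with the same weighted-Sobolev regularity and the same decay rate $\tau$, and the ADM integral computation at infinity is identical: $\mathfrak{m}(g_\delta) = \mathfrak{m}(g)$. In addition, the underlying smooth manifold $M = E \cup \Omega$ is unchanged, so the topological assumption that $M$ contain no non-separating $2$-spheres (Condition (T) from Subsection \ref{P1}) continues to hold.

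Next, since $g_\delta$ is $C^2$ and asymptotically flat, standard elliptic theory on asymptotically flat manifolds (for instance Theorem 3.1 of \cite{Bartnik}) produces a harmonic function $u_\delta$ on $(M, g_\delta)$ that is asymptotic to the coordinate function $x_1$, with $u_\delta - x_1 \in W^{2,p}_{1-\tau}$ at infinity. Because $g_\delta$ is $C^2$ globally, elliptic regularity gives $u_\delta \in C^{2,\alpha}_{loc}(M)$, which is enough to interpret $|\nabla_\delta^2 u_\delta|^2/|\nabla_\delta u_\delta|$ and $R_\delta |\nabla_\delta u_\delta|$ pointwise a.e. and to run the level-set arguments of BKKS (Bochner's identity, the coarea formula, and Gauss-Bonnet on regular level sets). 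With these inputs in place, equation (6.28) of \cite{BKKS} applies verbatim to $(M, g_\delta)$ and $u_\delta$, yielding the asserted inequality.

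The only subtle point is that the scalar curvature $R_\delta$ is not uniformly bounded as $\delta \to 0$ (items (3)-(4) of Proposition \ref{mol}), but this is \emph{not} an obstruction for the present statement, which is stated at fixed $\delta > 0$: for each fixed $\delta$, $R_\delta$ is bounded on $M$, so the right-hand integrand is well-defined and the hypotheses of BKKS are met. The genuinely delicate work is deferred to a subsequent step (not part of this proposition), where one would combine Proposition \ref{IF} with the explicit form of $R_\delta$ near $\Sigma$ from item (4) to extract in the limit the boundary contribution $\int_\Sigma (H_{_\Omega} - H)|\nabla u|$ appearing in \eqref{eq-main-intro}; the proof of the present proposition is merely the direct invocation of BKKS for the fixed smooth metric $g_\delta$.
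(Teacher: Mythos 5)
Your proposal matches the paper's own argument, which simply invokes formula (6.28) of \cite{BKKS} for the smooth asymptotically flat manifold $(M, g_\delta)$ and observes that $\mathfrak{m}(g_\delta) = \mathfrak{m}(g)$ because $g_\delta = g$ near infinity. Your additional remarks on Condition (T) being preserved, the existence and regularity of $u_\delta$ via Bartnik, and the harmlessness at fixed $\delta$ of the blow-up of $R_\delta$ as $\delta \to 0$ are all correct and merely spell out what the paper leaves implicit.
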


\vh

\begin{proof}[Proof of \eqref{eq-main-intro} using Propositions \ref{mol} and \ref{IF}]

\

\vh

On $(M, g)$,  let $ g_0 = g $ and let $ \{ g_\delta \}$ be given in Proposition \ref{mol}.
Let $ u_0 $ be the  harmonic function on $(M, g_0)$,  asymptotic to $x_1$ near infinity.
($u_0$ is precisely the function $u$ considered in Section \ref{sec-mass-fill-in}.)
Similar to the start of the proof of Proposition \ref{prop-mass-bdry}, 
we may  view this $u_0$ as one of the asymptotically flat  coordinate functions near infinity.
In what follows, we define $\bar x_1  = u_0 $.

For each $\delta > 0$, we recall how one finds the harmonic function $u_\delta $ that is asymptotic to $ \bar x_1 $.
First, we compute $ \Delta_{\delta} \bar x_1 $, the Laplacian of $\bar x_1$ with respect to $g_\d$, 
which satisfies 
$ \Delta_{\delta} \bar x_1 = 0 $  outside $ \Sigma \times (- \delta, \delta ) $.
This shows 
$ \Delta_{\delta} \bar x_1 \in W_{ - \eta - 2  }^{0 , p} (M)$,  $\forall \, \eta >  0 $.
Consider the operator 
\be
\Delta_{\delta} : W^{2, p}_{ - \eta} (M) \ \rightarrow \ W^{0,p}_{  - \eta - 2 } (M)  . 
\ee
By Proposition 2.2 in \cite{Bartnik}, $ \Delta_{\d}$ is an isomorphism provided $ 0 < \eta < 1 $.
Therefore, for a given $ \eta \in (0,1)$,  there exists a $ w_\delta \, \in W^{2,p}_{ - \eta  } (M ) $
such that $ \Delta_{\delta} w_\delta = \Delta_{\delta} \bar x_1$ and
\be
\begin{split}
|| w_\delta ||_{ W^{2,p}_{ - \eta  } (M ) } \le & \ C || \Delta_{\delta} w_\delta ||_{W^{0,p}_{-\eta -2} (M)} \\
= & \ C || \Delta_{\delta} \bar x_1 ||_{W^{0,p}_{-\eta -2} (M)} .
\end{split}
\ee
Here $C$ can be taken to be independent on $\delta$ by (i) and (ii) of Proposition \ref{mol}.

Since $ \bar x_1 = u_0 \in W^{2, p}_{loc} (M)$ and $ \{ g_\delta \}$ is  uniformly bounded  in $C^{0,1}(M)$, 
we have 
\bee
\int_{\overline U}   | \Delta_{\delta} \bar x_1 |^p < \Lambda  ,
\eee
where  $U$ is a fixed bounded open set containing $\Sigma$ and $\Lambda $ is a constant independent on $\delta$.
This combined with (i) of Proposition \ref{mol} shows, as $\delta \to 0$, 
\be
|| \Delta_{\delta} \bar x_1 ||_{W^{0,p}_{-\eta -2} (M)}  = o (1) , 
\ee
Therefore, 
\be \label{eq-w-d}
|| w_\delta ||_{ W^{2,p}_{ - \eta  } (M ) }  =  o (1) .
\ee
By Sobolev embedding,  $w_\d$ also satisfies  $ || w_\delta  ||_{ C^{1, \alpha}_{ - \eta} (M)  }  =  o (1) $.
Set $ u_\delta = u_0 - w_\delta $, $u_\delta$ is the harmonic function on $(M, g_\d)$ that is asymptotic to $\bar x_1$.

By Propositions \ref{mol} and \ref{IF}, we have

\begin{equation} \label{eq-A-m-d}
\begin{split}
\m (g) \geq &  \  \underbrace{  \int_{ M  }
\frac{|\Na^2  u_{\d}|^2}{|\Na  u_{\d}|} }_{\rom{1}}  
  + \underbrace{ \int_{ M  \setminus  \{ \Sigma \times (-\delta, \delta ) \} } R_\d  |\Na u_{\d}| }_{\rom{2} } 
 +  \underbrace{\int_{\S}\int_{-\delta}^{\delta} R_{\d}|\Na u_{\d}| }_{\rom{3}} .
\end{split}
\ee
Letting $\d \to 0 $,  by \eqref{eq-w-d} and Fatou's Lemma, 
\be
\liminf_{\d \to 0} \,  \rom{1}  
\ge \liminf_{\d \to 0}
  \int_{ M  \setminus  \{ \Sigma \times (-\delta, \delta ) \} }
\frac{|\Na^2  u_{\d}|^2}{|\Na  u_{\d}| }
\ge \int_{M } \frac{|\Na^2 u_0|^2} {|\Na u_0|} .
\ee
By \eqref{eq-w-d} and Proposition \ref{mol}, 
\be \label{eq-A-R-d}
\lim_{\d \to 0} \, \rom{2} =  \int_{M   } R |\Na u_{0}|,  \ \ 
\text{and} \ \ 
\lim_{\d \to 0} \, \rom{3} =  2 \int_\Sigma (H_{_\Omega} - H ) | \nabla u_0 |.
\ee
Thus \eqref{eq-main-intro} follows from \eqref{eq-A-m-d} -- \eqref{eq-A-R-d}.
\end{proof}

We remark that though this approach established \eqref{eq-main-intro}, it does not give the regularity of $u$ on $\Sigma$
which is a main component of Theorem \ref{main-1-intro}. 

\bigskip

\noindent {\bf Acknowledgements.}
SH thanks Hubert Bray for insightful discussions relating to the level set method. 
PM is grateful to Danial Stern for giving the geometric interpretation of the boundary term in \eqref{eq-angle-Stern}.
PM is also indebted to Siyuan Lu for
explaining estimates on isometric embedding pertinent to Theorem \ref{main-2-intro}.
TT  thanks  Man-Chun Lee and Richard Schoen for helpful discussions on the desingularisation approach.

The authors want to thank the anonymous referee for helpful suggestions, specially for
the insightful comment on the zero mass case in relation to Corollary \ref{cor-WY}.

\end{document}